\newcommand{\Addresses}{{% additional braces for segregating \footnotesize
  \bigskip
  
  \textsc{Laboratoire Alexander Grothendieck, Institut des Hautes \'Etudes Scientifiques, Universit\'e Paris-Saclay, 35 route de Chartres, 91440 Bures-sur-Yvette, France}\par
  \textit{E-mail address}: \texttt{pierre-louis.blayac@u-psud.fr}
}}
\newcommand{\R}{\mathbb{R}}
\newcommand{\Q}{\mathbb{Q}}
\newcommand{\C}{\mathbb{C}}
\newcommand{\Z}{\mathbb{Z}}
\newcommand{\N}{\mathbb{N}}
\newcommand{\K}{\mathbb{K}}
\newcommand{\GL}{\mathrm{GL}}
\newcommand{\SL}{\mathrm{SL}}
\newcommand{\PGL}{\mathrm{PGL}}
\newcommand{\PSL}{\mathrm{PSL}}
\newcommand{\PR}{\mathrm{P}}
\newcommand{\Ccal}{\mathcal{C}}
\newtheorem{prop}{Proposition}[section]
\newtheorem{thm}[prop]{Theorem}
\newtheorem{lemma}[prop]{Lemma}
\newtheorem{fait}[prop]{Fact}
\newtheorem{cor}[prop]{Corollary}
\newtheorem{obs}[prop]{Observation}
\theoremstyle{definition}
\newtheorem{defi}[prop]{Definition}
\theoremstyle{remark}
\newtheorem{rqq}[prop]{Remark}
\newtheorem{nota}[prop]{Notation}
\numberwithin{equation}{section}
\DeclareMathOperator{\Span}{Span}
\DeclareMathOperator{\Ker}{Ker}
\DeclareMathOperator{\Aut}{Aut}
\DeclareMathOperator{\End}{End}
\DeclareMathOperator{\axis}{Axis}
\DeclareMathOperator{\Geod}{Geod}
\DeclareMathOperator{\NW}{NW}
\newcommand{\ie}{i.e.\ }
\newcommand{\eg}{e.g.\ }
\newcommand{\resp}{resp.\ }
\newcounter{mycount}
\newcommand{\length}[1]{
    \setcounter{mycount}{0}
    \foreach \x in {#1}{
        \stepcounter{mycount}
        }
}
\newcommand{\listnb}[3]{
    \foreach \temp@a [count=\temp@i] in {#1} {
        \IfEq{\temp@i}{#2}{\global\let#3\temp@a\breakforeach}{}
    }
    \par
}
\newcommand{\listnbmod}[2]{
\listnb{#1}{\intcalcInc{\intcalcMod{\intcalcDec{#2}}{\themycount}}}
}
\newcommand{\getanglepoints}[3]{
    \pgfmathanglebetweenpoints{\pgfpointanchor{#2}{center}}
                              {\pgfpointanchor{#3}{center}}
    \global\let#1\pgfmathresult  
}
\newcommand{\getanglelines}[5]{
    \pgfmathanglebetweenlines{\pgfpointanchor{#2}{center}}
                             {\pgfpointanchor{#3}{center}}
                             {\pgfpointanchor{#4}{center}}
                             {\pgfpointanchor{#5}{center}}
    \global\let#1\pgfmathresult  
}
\newcommand{\getdistance}[3]{
  \pgfpointdiff{\pgfpointanchor{#2}{center}} 
               {\pgfpointanchor{#3}{center}} 
  \pgf@xa=\pgf@x
  \pgf@ya=\pgf@y
  \pgfmathparse{veclen(\pgf@xa,\pgf@ya)/28.45274/2} 
  \global\let#1\pgfmathresult 
}
\newcommand{\chooseangle}[6]{
\getanglelines{\temp@a}{#1}{#2}{#2}{#3}
\getanglelines{\temp@b}{#2}{#3}{#3}{#4}
\getanglelines{\temp@c}{#3}{#4}{#4}{#5}
\getanglepoints{\temp@d}{#3}{#4}
\pgfmathsetmacro\temp@e{ifthenelse(greater(\temp@a +\temp@c ,0.001),-\temp@c /(max(\temp@a +\temp@c ,0.001))*\temp@b+\temp@d,-\temp@b /2 +\temp@d)}
\global\let#6=\temp@e
}
\newcommand{\choosecontrolpoints}[7]{
\getanglepoints{\temp@c}{#1}{#2}
\getdistance{\temp@l}{#1}{#2}
\pgfmathsetmacro\temp@a{Mod(\temp@c -#3 ,360)}
\pgfmathsetmacro\temp@b{Mod(#4-\temp@c ,360)}
\pgfmathsetmacro\temp@la{
    ifthenelse(less(\temp@a,90),
        ifthenelse(less(\temp@b,90),abs(sin(\temp@b))*\temp@l,\temp@l),
        ifthenelse(less(\temp@b,90),abs(sin(\temp@b))*\temp@l,\temp@l))}
\pgfmathsetmacro\temp@lb{
    ifthenelse(less(\temp@b,90),
        ifthenelse(less(\temp@a,90),abs(sin(\temp@a))*\temp@l,\temp@l),
        ifthenelse(less(\temp@a,90),abs(sin(\temp@a))*\temp@l,\temp@l))}
        \pgfmathparse{#7*\temp@la}
\global\let#5=\pgfmathresult
        \pgfmathparse{#7*\temp@lb}
\global\let#6=\pgfmathresult
}
\newcommand{\cvx}[2]{
\foreach \@i in {1,...,\themycount} {

\listnbmod{#1}{\intcalcSub{\@i}{2}}{\@A}
\listnbmod{#1}{\intcalcSub{\@i}{1}}{\@B}
\listnbmod{#1}{\@i}{\@C}
\listnbmod{#1}{\intcalcAdd{\@i}{1}}{\@D}
\listnbmod{#1}{\intcalcAdd{\@i}{2}}{\@E}
\listnbmod{#1}{\intcalcAdd{\@i}{3}}{\@F}

\chooseangle{\@A}
            {\@B}
            {\@C}
            {\@D}
            {\@E}
            {\@a}
\chooseangle{\@B}
            {\@C}
            {\@D}
            {\@E}
            {\@F}
            {\@b}
\choosecontrolpoints{\@C}
                    {\@D}
                    {\@a}
                    {\@b}
                    {\@la}
                    {\@lb}
                    {#2}
                    
\coordinate (G) at ($(\@C)+(\@a:\@la)$);
\coordinate (H) at ($(\@D)+(\@b+180:\@lb)$);

\draw (\@C) .. controls (G) and (H) .. (\@D);
}
}
\newcommand{\cvxx}[2]{
\foreach \@i in {1,...,\themycount} {

\listnbmod{#1}{\intcalcSub{\@i}{2}}{\@A}
\listnbmod{#1}{\intcalcSub{\@i}{1}}{\@B}
\listnbmod{#1}{\@i}{\@C}
\listnbmod{#1}{\intcalcAdd{\@i}{1}}{\@D}
\listnbmod{#1}{\intcalcAdd{\@i}{2}}{\@E}
\listnbmod{#1}{\intcalcAdd{\@i}{3}}{\@F}

\chooseangle{\@A}
            {\@B}
            {\@C}
            {\@D}
            {\@E}
            {\@a}
\chooseangle{\@B}
            {\@C}
            {\@D}
            {\@E}
            {\@F}
            {\@b}
\choosecontrolpoints{\@C}
                    {\@D}
                    {\@a}
                    {\@b}
                    {\@la}
                    {\@lb}
                    {#2}
                    
\coordinate (G) at ($(\@C)+(\@a:\@la)$);
\coordinate (H) at ($(\@D)+(\@b+180:\@lb)$);
\coordinate (A\@i) at (intersection of \@C--G and \@D--H);

\draw (\@C) .. controls (G) and (H) .. (\@D);
}
}
\newcommand{\draww}[7]{
\coordinate (@A) at ($(#1)!-#3!(#2)$);
\coordinate (@B) at ($(#2)!-#4!(#1)$);
\draw[#5] (@A)--(@B) node[#6]{#7};
}
\title{Topological mixing of the geodesic flow on convex projective manifolds}
\author{Pierre-Louis Blayac}
\date{}
\begin{document}

\maketitle

\begin{abstract}
 We introduce a natural subset of the unit tangent bundle of a convex projective manifold, the biproximal unit tangent bundle; it is closed and invariant under the geodesic flow, and we prove that the geodesic flow is topologically mixing on it whenever the manifold is irreducible. We also show that, for higher-rank, irreducible, compact convex projective manifolds, the geodesic flow is topologically mixing on each connected component of the non-wandering set.
\end{abstract}

\section{Introduction}

This article is concerned with \emph{convex projective manifolds}, namely quotients $M=\Omega/\Gamma$ of a properly convex open subset $\Omega$ of a finite-dimensional real projective space $\PR(V)$ by a torsion-free discrete subgroup $\Gamma$ of $\PGL(V)$ preserving $\Omega$. Recall that \emph{properly convex} means that $\Omega$ is convex and bounded in some affine chart of $\PR(V)$. These manifolds are generalisations of real hyperbolic manifolds, to whom they bring a new diversity of geometric features. When $M$ is compact, we say that $\Gamma$ \emph{divides} $\Omega$ and that $\Omega$ is a \emph{divisible convex set} (see \cite{benoist_survey}).

Convex projective manifolds are endowed with a natural Finsler metric, which is not necessarily Riemannian. This metric defines a \emph{geodesic flow} $(\phi_t)_{t\in\R}$ on the unit tangent bundle $T^1M=T^1\Omega/\Gamma$, obtained by following geodesics contained in projective lines (see Section~\ref{distance}). 

There is a dichotomy depending on whether $\Omega$ is \emph{strictly convex} (meaning there is no non-trivial segment in the boundary $\partial\Omega$ of $\Omega$ in $\PR(V)$, see Section~\ref{extreme_yet_smooth}), or not.

On the one hand, when $\Omega$ is strictly convex and $\Omega/\Gamma$ is compact, Benoist \cite[Th.\,1.1]{CD1} proved that many dynamical properties of the classical geodesic flow on compact hyperbolic manifolds still hold. Then Crampon and Marquis \cite{CM2014flot} generalised this to the case when $\Omega$ is strictly convex but $\Omega/\Gamma$ is not necessarily compact.

On the other hand, Benoist proved that when $\Omega$ is \emph{not} strictly convex, one of the key properties of the classical geodesic flow, namely uniform hyperbolicity, is never satisfied. Still, Bray \cite[Th.\,5.7]{bray_top_mixing} managed to recover some classical dynamical properties of the geodesic flow in this context: namely, he established that the geodesic flow is \emph{topologically transitive}, and even \emph{topologically mixing} when $\Omega/\Gamma$ is compact and 3-dimensional, not necessarily strictly convex, and $\Gamma$ is strongly irreducible (\ie $\Gamma$ does not preserve any finite union of proper projective subspaces). Recall that a continuous flow $(f_t)_{t\in\R}$ on a topological space $X$ is called topologically transitive if for any non-empty open subsets $U,V\subset X$, there exists $t>0$ such that $f_t(U)$ meets $V$, and $(f_t)_t$ is called topologically mixing if for any non-empty open $U,V\subset X$ and any large enough $t>0$, the set $f_t(U)$ meets $V$; note that the latter property implies the former. In order to prove his theorem, Bray used --- and this is where the assumption that $\Omega/\Gamma$ is compact and 3-dimensional is crucial --- another paper of Benoist \cite[Th.\,1.1]{CD4}, which gives a precise and beautiful description of these compact $3$-manifolds.

In this paper, we generalise Bray's result on topological mixing to the setting where $\Omega/\Gamma$ is not necessarily compact, and to arbitrary dimension, as we explain below.

More refined dynamical properties of the geodesic flow on convex projective manifolds will be established in the forthcoming paper \cite{mesureBM}. In particular, we shall generalise \cite{bray_ergodicity} and prove the existence of a unique flow-invariant measure of maximal entropy (\emph{Bowen--Margulis measure}) on the unit tangent bundle of \emph{rank-one} (Definition~\ref{def rank-one}) compact convex projective manifolds.

\subsection{Main result}\label{Main result}

Recall that an element of $\PGL(V)$ is said to be \emph{proximal} if it has an attracting fixed point in $\PR(V)$. The \emph{proximal limit set} $\Lambda_\Gamma\subset\PR(V)$ of $\Gamma$ is the closure of the set of attracting fixed points of proximal elements of $\Gamma$; it is $\Gamma$-invariant. We denote by $\Aut(\Omega)$ the group of elements of $\PGL(V)$ preserving~$\Omega$. We introduce the following subset of $T^1M$.

\begin{defi}\label{T1Omegabip}
 Let $\Omega\subset \PR(V)$ be a properly convex open set and $\Gamma\subset\Aut(\Omega)$ a discrete subgroup; denote by $M$ the quotient $\Omega/\Gamma$. The \emph{biproximal unit tangent bundle} of $M$ is
 \[T^1M_{bip}:=\{v\in T^1\Omega \ : \ \phi_{\pm\infty}v\in \Lambda_\Gamma\}/\Gamma\subset T^1M, \]
 where $\phi_{\pm\infty}v=\lim_{t\to\pm\infty}\pi\phi_tv$ are the intersection points of the projective line generated by $v$ with the boundary $\partial\Omega$.
\end{defi}

Note that the biproximal unit tangent bundle is closed and invariant under the action of the geodesic flow. It is contained in the \emph{non-wandering set} $\NW(T^1M,(\phi_t)_{t\in\R})$ (Corollary~\ref{bipsaredense}), which consists of those vectors $v$ whose neighbourhoods contain a geodesic which comes back close to $v$ infinitely often (see Definition~\ref{def_NW}); we write $\NW(T^1M)$ for short when the context is clear. The main result of this paper is the following.

\begin{thm}\label{mixing}
 Let $\Omega\subset \PR(V)$ be a properly convex open set and $\Gamma\subset\Aut(\Omega)$ a discrete subgroup which is strongly irreducible. Set $M=\Omega/\Gamma$, and suppose that $T^1M_{bip}$ is non-empty. Then
 \begin{enumerate}
  \item \label{item:mixing} the geodesic flow $(\phi_t)_t$ on $T^1M_{bip}$ is topologically mixing;
  \item \label{item:max de T1Mbip} if $(\phi_t)_t$ is topologically transitive on an invariant closed subset $A\subset T^1M$ which contains $T^1M_{bip}$, then $A=T^1M_{bip}$.
 \end{enumerate}
\end{thm}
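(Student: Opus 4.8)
The plan is to prove both statements by a boundary/dynamical argument, centred on the proximal limit set $\Lambda_\Gamma$ and the convex core machinery. For part (1), topological mixing, I would first reduce the claim to a statement about the endpoints of geodesics in $\partial\Omega$. Given nonempty open $U,V\subset T^1M_{bip}$, lift them to $\Gamma$-invariant open sets $\widetilde U,\widetilde V$ in the corresponding subset of $T^1\Omega$, and record their endpoint maps into $\Lambda_\Gamma\times\Lambda_\Gamma$. The heart of the matter is then a mixing statement on the "double boundary": I expect to need that for any pair of small open sets $O_1,O_2,O_1',O_2'$ in $\Lambda_\Gamma$ (with the relevant pairs joinable by a geodesic in $\Omega$), there is $T>0$ so that for all $t\ge T$ one can find a $\gamma\in\Gamma$ and a geodesic whose past endpoint is near $O_1$, whose future endpoint is near $O_2$, and which, after flowing time $t$ and applying $\gamma$, lands with past endpoint near $O_1'$ and future endpoint near $O_2'$. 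Such a statement is produced from a ping-pong/contraction argument using proximal (or biproximal) elements of $\Gamma$: strong irreducibility guarantees, via the standard representation-theoretic lemmas (density of the directions of attracting/repelling points of proximal elements, and the ability to realise prescribed attracting-repelling pairs approximately), enough proximal elements with attracting points spread densely in $\Lambda_\Gamma$, and one composes high powers of well-chosen proximal elements to both steer the endpoints and absorb the required flow-time. The translation length of the powers is what lets $t$ be taken arbitrarily large; controlling it modulo a bounded error — so that \emph{every} sufficiently large $t$ is hit, not just a discrete set — is the delicate point, and is typically handled by playing two non-commuting proximal elements against each other to get translation lengths whose integer combinations are eventually all of $[T,\infty)$ up to small error, using that the group of such lengths is dense in $\R$ (again a consequence of strong irreducibility, ruling out a common eigenvalue-ratio lattice).

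\smallskip

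More concretely, the key steps in order are: (i) set up the endpoint parametrisation of $T^1M_{bip}$ and reduce mixing to the double-boundary statement above; (ii) prove a "density of biproximal data" lemma — for any $(\xi,\eta)$ in the relevant subset of $\Lambda_\Gamma\times\Lambda_\Gamma$ and any target $(\xi',\eta')$ likewise, and any $\varepsilon>0$ and any $T>0$, there is an element of $\Gamma$ (a product of high powers of proximal elements) that moves a geodesic $\varepsilon$-close to $(\xi,\eta)$ to one $\varepsilon$-close to $(\xi',\eta')$ while its contribution to the flow parameter exceeds $T$; (iii) upgrade the "exceeds $T$" to "equals any prescribed $t\ge T$ up to $\varepsilon$" by the two-generator length-adjustment argument, invoking that $\Gamma$ being strongly irreducible and preserving a properly convex $\Omega$ forces the length spectrum to generate a dense subgroup of $\R$; (iv) conclude that $\phi_t(U)\cap V\neq\emptyset$ for all large $t$.

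\smallskip

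For part (2), suppose $A\supset T^1M_{bip}$ is closed, $(\phi_t)$-invariant, and topologically transitive. Since $T^1M_{bip}$ is closed and invariant (as noted after Definition~\ref{T1Omegabip}) and, by Corollary~\ref{bipsaredense}, it is the "core" of the dynamics — biproximal vectors are dense in $\NW(T^1M)$, and in fact any orbit that is recurrent in both time directions must have both endpoints in $\Lambda_\Gamma$ — I would argue as follows. Pick $v\in A$ with dense forward orbit; its $\omega$-limit set is all of $A$, and also every point of $A$ is non-wandering, hence in $\NW(T^1M)$. The inclusion $T^1M_{bip}\subset A$ gives a nonempty open-in-$A$ set $T^1M_{bip}\cap A = T^1M_{bip}$; topological transitivity of $(\phi_t)$ on $A$ combined with mixing (part (1)) on the subsystem $T^1M_{bip}$ forces, by a standard argument, that $T^1M_{bip}$ is both open and closed in $A$, hence a union of connected components; but transitivity on $A$ means $A$ has a dense orbit, so $A$ cannot properly contain a nonempty clopen proper invariant subset — contradiction unless $A=T^1M_{bip}$. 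The mechanism for "$T^1M_{bip}$ is open in $A$": if $w\in A$ is a limit of vectors $w_n\in A\setminus T^1M_{bip}$, then some endpoint of $w_n$ leaves $\Lambda_\Gamma$; using that the complement of $\Lambda_\Gamma$ in $\partial\Omega$ is open and that points there are wandering for the geodesic flow (they flow to $\partial\Omega\setminus\Lambda_\Gamma$, escaping every compact part of the core), one contradicts $w_n\in A\subset\NW(T^1M)$, or contradicts transitivity by showing such $w$ cannot be in the closure of a single dense orbit together with $T^1M_{bip}$.

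\smallskip

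The main obstacle I anticipate is step (iii): obtaining mixing (all large $t$) rather than merely transitivity (a sequence $t_n\to\infty$). In the strictly convex, compact case this is classical (Benoist), and in Bray's non-strictly-convex case it used the explicit classification of $3$-manifolds; here we must extract it purely from strong irreducibility. The crux is to show the additive group generated by the "gaps" $\log\big(\lambda_1(\gamma)/\lambda_2(\gamma)\big)$ — equivalently the translation lengths of biproximal elements — is dense in $\R$, and then to realise an arbitrary large target $t$ as such a combination while \emph{simultaneously} keeping the attracting/repelling data close to the prescribed boundary points. Decoupling the "length" adjustment from the "direction" adjustment — doing the latter with one set of high powers and then correcting the former with commuting-enough or far-apart elements whose effect on directions is negligible — is the technical engine, and is where I would spend the bulk of the work.
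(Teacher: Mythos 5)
Your plan for part (1) follows the same broad strategy as the paper --- density of biproximal periodic orbits, non-arithmeticity of the length spectrum deduced from strong irreducibility via Benoist's results on Jordan projections, and a shadowing argument combining two periods --- but it leaves as acknowledged placeholders precisely the two points where the non-strictly-convex setting bites. First, your step (iii) needs the non-arithmeticity to be \emph{local}: the periodic orbits whose periods $\tau_1,\tau_2$ generate an $\epsilon$-dense subgroup of $\R$ must pass through the prescribed open sets. The paper achieves this (Proposition~\ref{local_non_arithmeticity}) by building a Schottky \emph{sub-semi-group} all of whose non-trivial elements have axes through a given open set and applying Benoist's density theorem to that semigroup; your ``decoupling'' of the length adjustment from the direction adjustment is exactly the mechanism you have not supplied. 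Second, and more seriously, your shadowing step tacitly assumes that a geodesic sharing an endpoint $\xi$ with a biproximal periodic one converges to it in $T^1M$; when $\Omega$ is not strictly convex this is false in general --- by Lemma~\ref{ctrlvarstab} the distance $d_{T^1\Omega}(\phi_tv,\phi_tw)$ tends to a positive constant unless $\xi$ is a smooth point. The paper's Lemma~\ref{biproxiattractingpointsaresmooth} (the endpoints of a biproximal periodic geodesic whose axis meets $\Omega$ are automatically smooth), combined with Proposition~\ref{strongstablemanifolds}, is what rescues the argument, and nothing in your proposal plays this role. A third, smaller, omission: the segment joining the repelling point of one periodic orbit to the attracting point of the other may lie entirely in $\partial\Omega$, so the connecting geodesic need not exist; the paper perturbs one endpoint within $\Lambda_\Gamma$, using minimality of the boundary action, to handle this.

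For part (2) your route contains a genuine error. You reduce openness of $T^1M_{bip}$ in $A$ to the claim that a vector with an endpoint outside $\Lambda_\Gamma$ is wandering. This is false: non-wandering vectors are only constrained to have endpoints in the full orbital limit set $\Lambda_\Gamma^{orb}$ (Observation~\ref{NW<T1Mcore}), which can strictly contain $\Lambda_\Gamma$, and the paper points out examples where $\NW(T^1M)$ strictly contains a non-empty $T^1M_{bip}$; so $T^1M_{bip}$ need not be open in a closed invariant transitive set a priori, and the clopen dichotomy does not get off the ground. The paper's argument is different and avoids this entirely: by Baire, transitivity on $A$ yields a vector $v$ whose forward \emph{and} backward orbits are both dense in $A$; since some $w\in T^1M_{bip}$ lies in the closure of each half-orbit, the monotonicity of $d_{T^1\Omega}$ along asymptotic geodesics (Proposition~\ref{strongstablemanifolds}.\ref{Item : varstabfaible}, packaged as Lemma~\ref{Lemme : limite d'orbite}) forces $\phi_{\pm\infty}\tilde v\in\overline{\Gamma\cdot\phi_{\pm\infty}\tilde w}\subset\Lambda_\Gamma$, whence $v\in T^1M_{bip}$ and $A=\overline{\{\phi_tv : t\in\R\}}\subset T^1M_{bip}$.
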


The assumption that $\Gamma$ is strongly irreducible is mild, and one can always restrict to it in the divisible case (see \cite[Th.\,3]{vey} and \cite[\S\,5.1]{benoist_survey}).  When $M$ is compact and $3$-dimensional, we recover Bray's result \cite[Th.\,5.7]{bray_top_mixing} because \cite[Th.\,1.1]{CD4} implies $T^1M_{bip}=T^1M$ in that case. When $\Omega$ is strictly convex, one can see that $T^1M_{bip}=\NW(T^1M)$ (see \cite[\S3.3]{CM2014flot} or Observation~\ref{NW<T1Mcore}), and Crampon--Marquis \cite[Prop.\,6.1]{CM2014flot} showed that in this case the geodesic flow is topologically mixing on $\NW(T^1M)$, if $\partial\Omega$ is smooth (for us smooth means $\mathcal{C}^1$, see Section~\ref{extreme_yet_smooth}). Thus, the point of Theorem~\ref{mixing}.\ref{item:mixing} is to treat the non-strictly convex case, where in general we can only prove that $T^1M_{bip}$ is contained in $\NW(T^1M)$, see Remark~\ref{T1Mbip<NW}.

Another way to formulate Theorem~\ref{mixing}.\ref{item:max de T1Mbip} is to say that $T^1M_{bip}$ is maximal for inclusion among the invariant closed subsets of $T^1M$ on which the geodesic flow is topologically transitive.

Our strategy of proof for Theorem~\ref{mixing}.\ref{item:mixing} is similar to that of Bray in \cite{bray_top_mixing}, but we manage to work without Benoist's geometric description \cite[Th.\,1.1]{CD4} of $3$-dimensional compact convex projective manifolds. Furthermore, we slightly shorten the proofs by using more algebraic arguments inspired by \cite{benoist2000automorphismes,BenoistPropAsymp2} (see Section~\ref{density}): they allow us to prove topological mixing directly, without establishing first topological transitivity and then using a closing lemma and a weak-orbit gluing lemma as in \cite[Th.\,4.4 \& Lem.\,5.3]{bray_top_mixing}.

Another strategy of proof for Theorem~\ref{mixing}.\ref{item:mixing}, in the compact case, could be to find a good geometric description that generalises Benoist's \cite{CD4} to arbitrary dimension. This is an interesting question, and recent work suggests that such a description could exist: Benoist \cite{CD4}, Marquis \cite{LudoThese},  Ballas--Danciger--Lee \cite{BDL_cvxproj_3mfd}, and Choi--Lee--Marquis \cite{choi2016convex} constructed non-strictly convex, compact convex projective manifolds in dimensions $4$ to~$7$ that share a number of nice geometric features with those in dimension $3$; recent work of Bobb \cite{Bobb} extends some results of \cite{CD4} to all dimensions.

\subsection{The biproximal unit tangent bundle}\label{The biproximal unit tangent bundle}

Topological mixing and topological transitivity belong to a family of transitivity properties which have been investigated for geodesic flows on non-positively curved Riemannian manifolds $X$ for many decades, see for instance the classical surveys \cite{hedlund_survey, EHS_survey}. We now briefly relate Theorem~\ref{mixing} to older results for non-positively curved manifolds.

The topological transitivity of $(\phi_t)_{t\in\R}$ on $\NW(T^1M)$ for $M=\Omega/\Gamma$, when $\Omega$ is strictly convex, $\partial\Omega$ is smooth and $\pi_1(M)$ is non-elementary \cite[Prop.\,6.1]{CM2014flot} is analogous to that of $(\phi_t)_{t\in\R}$ on $\NW(T^1X)$ when $X$ is negatively curved and $\pi_1(X)$ is non-elementary, which was proved by Eberlein \cite[Th.\,3.11]{eberlein_geodflowsI}. 

When $\Omega$ is not necessarily strictly convex and $T^1M_{bip}$ is non-empty, the situation is analogous to $X$ being non-positively curved and \emph{rank-one}, \ie having a \emph{rank-one} periodic vector. This notion was introduced by Ballmann--Brin--Eberlein \cite[Def.\,p.\,1]{BBE_rank_one}. By work of Ballmann \cite[Th.\,3.5]{ballmann_axial}, if $X$ is rank-one and $\NW(T^1X)=T^1X$ (\eg if $X$ is rank-one and compact), then $(\phi_t)_{t\in\R}$ is topologically mixing on $T^1X$. Coud\`ene--Schapira studied the action of $(\phi_t)_{t\in\R}$ on $\NW(T^1X)$ without assuming that $\NW(T^1X)=T^1X$; they established \cite[Th.\,5.2]{schapira_generic_measure} the topological transitivity of $(\phi_t)_{t\in\R}$ on some invariant subset $\NW_1(T^1X)$ of $\NW(T^1X)$, defined in \cite[\S5.1]{schapira_generic_measure}, consisting of rank-one vectors with an extra condition.

We wish to interpret $T^1M_{bip}$ as an analogue of $\NW_1(T^1X)$. The definitions of rank-one convex projective manifolds and their rank-one periodic geodesics are now available thanks to the very recent work of M.\,Islam \cite[Def.\,1.3 \& 6.2]{islam_rank_one} and A.\,Zimmer \cite[Def.\,1.1]{zimmer_higher_rank}. Here we adopt Islam's definition, which we reformulate as follows.

\begin{defi}[\!{\!\cite{islam_rank_one}}]\label{def rank-one}
Let $\Omega\subset\PR(V)$ be a properly convex open set, $\Gamma\subset\Aut(\Omega)$ a discrete subgroup, and $M:=\Omega/\Gamma$. A periodic vector $v\in T^1M$ is said to be \emph{rank-one} if for any lift $\tilde{v}\in T^1\Omega$, the points $\phi_{\infty}\tilde{v}$ and $\phi_{-\infty}\tilde v$ are \emph{smooth} (\ie admit a unique supporting hyperplane) and \emph{strongly extremal} (\ie are not contained in any non-trivial segment of the boundary $\partial\Omega$); in this case, any infinite-order element of $\Gamma$ which preserves the orbit of a lift $\tilde{v}$ is said to be \emph{rank-one}. The convex projective manifold (or orbifold) $M$ is \emph{rank-one} if $T^1M$ contains a rank-one periodic vector.
\end{defi}

The classical Fact~\ref{period_is_translation_length} below ensures that rank-one periodic vectors are contained in $T^1M_{bip}$, which is then non-empty whenever $M$ is rank-one. Furthermore, Proposition~\ref{rang un dense dans T1Mbip} tells us that, when they exist, periodic rank-one vectors are dense in $T^1M_{bip}$.

Further evidence for thinking that $T^1M_{bip}$ is analogous to $\NW_1(T^1X)$, is the fact that if $M$ is compact and \emph{higher-rank} (\ie not rank-one), then $T^1M_{bip}$ is empty (see Section~\ref{T1Mbip vide casred} and Remark~\ref{T1Mbip vide cassym}). In this case the proximal limit set is non-empty (Fact~\ref{densitybis}), but any segment between two distinct points of $\Lambda_\Gamma$ is contained in $\partial\Omega$.

To conclude this section, we ask the following question: if $T^1M_{bip}$ is non-empty, is it the whole non-wandering set $\NW(T^1M)$? In the Riemannian setting, if $X$ is compact and rank-one, then $\NW_1(T^1X)$ is dense in $\NW(T^1X)=T^1X$ \cite[\S5]{schapira_generic_measure}. In the paper in preparation \cite{mesureBM}, we prove that this is also true in the convex projective setting: if $M$ is compact and rank-one, then $T^1M_{bip}=\NW(T^1M)=T^1M$.

When the manifold is non-compact, the situation is more subtle. In the Riemannian setting, Coud\`ene--Schapira \cite[\S5.2]{schapira_generic_measure} constructed an example where $X$ is non-compact and $\NW_1(T^1X)$ is non-empty and not dense in $\NW(T^1X)$. In the convex projective setting, $T^1M_{bip}$ may be non-empty and smaller than $\NW(T^1M)$ for non-compact $M$, even when $M$ is convex cocompact in the sense of \cite{fannycvxcocpct}; such an example can be constructed using work in preparation of Danciger--Gu\'eritaud--Kassel \cite{DGKpingpong}, which was advertised in \cite[Prop.\,12.5]{fannycvxcocpct}.

Observe that when $M$ is higher-rank and compact, $\NW(T^1M)$ is different from $T^1M_{bip}$, since, contrary to the latter, the former is non-empty.

\subsection{The higher-rank, irreducible and compact case}

When $M$ is compact, higher-rank and irreducible (in the sense that $\Gamma$ is strongly irreducible), Theorem~\ref{mixing} does not tell us anything since $T^1M_{bip}$ is empty. However, in this case, the investigation of dynamical properties of the geodesic flow happens to be easier, thanks to the recent work of Zimmer \cite[Th.\,1.4]{zimmer_higher_rank}, which classifies these manifolds (this is similar to a classification of compact higher-rank non-positively curved Riemannian manifolds by Ballmann \cite[Cor.\,1]{ballmann_higher_rank} and Burns--Spatzier \cite[Th.\,5.1]{burns_spatzier_higher_rank}). More precisely, he proves that universal covers in $\PR(V)$ of higher-rank irreducible compact convex projective manifolds belong to a narrow and explicit list of properly convex open sets, called \emph{symmetric} (see Section~\ref{symmetric_convex}). We use this to establish the following.

\begin{prop}\label{symmetric_top_mixing}
 Let $M$ be a higher-rank irreducible compact convex projective manifold. Then the non-wandering set of the geodesic flow on $T^1M$ has several (more than one) connected components, and the geodesic flow is topologically mixing on each of them.
\end{prop}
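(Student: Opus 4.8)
The plan is to use Zimmer's classification \cite{zimmer_higher_rank}: the universal cover $\Omega$ of a higher-rank irreducible compact convex projective manifold $M=\Omega/\Gamma$ is \emph{symmetric}, so it is (projectively isomorphic to) the projectivization of the cone of positive-definite matrices in one of the algebras $\mathrm{Sym}_d(\R)$, $\mathrm{Herm}_d(\C)$, $\mathrm{Herm}_d(\mathbb{H})$, $\mathrm{Herm}_3(\mathbb{O})$, or a Lorentzian cone; equivalently $\Omega$ is a Riemannian symmetric space of non-compact type of rank $r=\mathrm{rk}\,\Omega\ge 2$ on which $\Gamma$ acts cocompactly by isometries of the Hilbert (Finsler) metric. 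First I would recall from Section~\ref{symmetric_convex} the precise description of the geodesics of the Hilbert metric on such $\Omega$ and the structure of $T^1\Omega$: a unit tangent vector $v$ determines a pair of boundary points $\phi_{\pm\infty}v\in\partial\Omega$, and the boundary $\partial\Omega$ stratifies into faces; the ``regular'' part of $T^1M$ consists of vectors whose endpoints lie in open faces of complementary type, and these vectors are precisely the ones whose geodesic stays in a maximal flat. The key structural input is that the space of maximal flats (the ``Weyl chamber flow'' picture) gives a fibration of a full-measure, open, dense, $\Gamma$- and flow-invariant piece of $T^1M$ over a compact homogeneous space, and the geodesic flow on $T^1M$ restricted to each such piece is conjugate to a homogeneous flow.

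Next I would identify the connected components of $\NW(T^1M)$. Because $\Omega$ is symmetric of rank $\ge 2$, the geodesic flow direction inside a maximal flat $F\cong\R^r$ can be a \emph{singular} direction (lying in a wall of a Weyl chamber) or a \emph{regular} one; the endpoints $\phi_{\pm\infty}v$ then lie in faces of $\partial\Omega$ whose dimension/type is governed by the face of the closed Weyl chamber containing the direction of $v$. Two vectors can be joined by a path in $T^1M$ along which $\NW$-membership is preserved only if this ``type'' is locally constant, and I claim $\NW(T^1M)=T^1M$ here (every vector is non-wandering, since $M$ is compact and $\Gamma$ acts cocompactly on the symmetric space — recurrence of the Weyl chamber flow on a compact homogeneous space). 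Hence the connected components of $\NW(T^1M)=T^1M$ are in bijection with the connected components of the space of directions, which is the quotient of the unit sphere in a maximal flat by the (finite) Weyl group $W$ acting together with the point stabilizer; the ``type stratification'' of the closed Weyl chamber into its open faces has several pieces, and distinct type-strata sit in distinct components — this is where the ``more than one component'' assertion comes from, and it is essentially the statement that the closed Weyl chamber $\overline{C}$ of a rank-$\ge2$ root system is not connected after removing lower strata in the way forced by the face structure of $\partial\Omega$. Concretely, for $\Omega=\PR(\mathrm{Sym}_d(\R)_{>0})$ with $d\ge 3$, a rank-$2$ subspace already exhibits regular versus singular directions giving at least two components.

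Then, on each component $\mathcal{C}$, I would prove topological mixing by reducing to a homogeneous dynamics statement. On the open dense full-measure stratum, the geodesic flow in a fixed type is conjugate to the action of a one-parameter subgroup $\exp(t Y)$, $Y$ in the relative interior of a face of $\overline{C}$, on $\Gamma\backslash G/M_Y$ where $G$ is the isometry group of the symmetric space and $M_Y$ the centralizer of $Y$; since $\Gamma\backslash G$ is compact and the $\R$-action generated by a nonzero $Y$ in a rank-$\ge2$ semisimple $G$ is mixing on $\Gamma\backslash G$ by the Howe--Moore property (vanishing of matrix coefficients) — note $\Gamma$ is a lattice, being cocompact — topological mixing follows on the open stratum, and then I would extend it to all of $\mathcal{C}$ by density and the fact that $\mathcal{C}$ is the closure of its intersection with the open stratum (continuity of $\phi_t$ and an approximation argument as in the proof of Theorem~\ref{mixing}.\ref{item:mixing}). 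Topological mixing on each component is an open condition that passes to closures in this way.

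\textbf{Main obstacle.} The delicate point is not the homogeneous mixing (Howe--Moore handles that) but the bookkeeping of the \emph{connected components} of $\NW(T^1M)$: one must match the face stratification of $\partial\Omega$ of the symmetric convex set with the Weyl-chamber-face stratification of directions, show $\NW(T^1M)=T^1M$, show there are at least two strata that are genuinely separated (not merely glued along lower-dimensional strata) in $T^1M$, and verify that each stratum-closure is both flow-invariant and connected. Handling the non-self-dual symmetric cones (where $\Omega$ and its dual are not projectively isomorphic, forcing an asymmetry between the types of $\phi_{+\infty}v$ and $\phi_{-\infty}v$) and the exceptional $\mathrm{Herm}_3(\mathbb{O})$ case requires care, but in every case the root system is explicit, so this is a finite check rather than a conceptual difficulty.
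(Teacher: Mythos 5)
Your overall framework---Zimmer's classification reducing to symmetric $\Omega$, then homogeneous dynamics and Howe--Moore for mixing---is the same as the paper's. But the proposal contains one error that is fatal to the component count, and a smaller one in recounting the classification.

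The central error is the claim that $\NW(T^1M)=T^1M$ ``since $M$ is compact and $\Gamma$ acts cocompactly on the symmetric space --- recurrence of the Weyl chamber flow.'' This is false, and it conflates recurrence of the $A$-action on the compact $\Gamma\backslash G$ with recurrence of the Hilbert geodesic flow on $T^1M$. The correct picture, established in Proposition~\ref{NW dans le cas symétrique}, is as follows. Writing $\Omega=\Omega_{N,\K}$ and stratifying $T^1\Omega$ by $T^1\Omega_{i,j}$ according to the ranks $i$ and $j$ of the boundary endpoints $\phi_{-\infty}v,\phi_{\infty}v$, the non-empty strata are those with $i+j\geq N$, and the non-wandering set is exactly the \emph{boundary-case} union $\bigcup_{i=1}^{N-1}T^1\Omega_{i,N-i}$, each of which is closed. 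In particular the open dense stratum $T^1\Omega_{N-1,N-1}$ (where $i+j=2N-2>N$, since $N\geq 3$) consists entirely of wandering vectors: the paper's proof of part (3) of that proposition shows, via the $KAK$ decomposition, that whenever $i+j>N$ the orbit cannot return. So your ``open dense full-measure stratum'' is the wandering part, not the stage on which mixing takes place. Consequently the component count you propose---based on a Weyl-chamber-face stratification of the direction sphere inside $T^1M$ itself---does not describe what is actually being counted; the paper's $N-1$ components are the disjoint closed homogeneous spaces $T^1M_{i,N-i}=\Gamma\backslash G_0/K_{i,N-i}$, and the mixing is proved by applying Howe--Moore to $\Gamma\backslash G_0$ and pushing forward along the one-parameter group $A_{i,N-i}$ on each.

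A smaller point: the Lorentzian cone should not appear in your list. The projectivized Lorentzian cone is an ellipsoid, hence strictly convex and rank one; Zimmer's list for the higher-rank irreducible compact case is exactly $\Omega_{N,\K}$ with $N\geq 3$ and $\K\in\{\R,\C,\mathbb{H}\}$, plus $N=3$, $\K=\mathbb{O}$. Finally, the heuristic ``geodesics stay in a maximal flat'' needs care here because the Hilbert-metric geodesic flow is a Finsler flow on a non-strictly-convex domain, not the Riemannian symmetric-space geodesic flow; the paper sidesteps this entirely by working directly with the rank stratification of $\partial\Omega$ rather than with flats.
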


Proposition~\ref{symmetric_top_mixing} is a direct consequence of Proposition~\ref{symmetric_mixing}, where the connected components of the non-wandering set are described more precisely.

\paragraph{Organisation of the paper} In Section~\ref{Reminders} we recall some basic definitions and properties in convex projective geometry. In Section~\ref{Endpoints of biproximal periodic geodesics are smooth} we investigate the regularity of endpoints of biproximal periodic geodesics. In Section~\ref{density} we prove that, when $T^1M_{bip}\neq\emptyset$, the local length spectrum is non-arithmetic; in other words, for every non-empty open subset $U$ of $T^1M_{bip}$, the additive subgroup of $\R$ generated by lengths of biproximal periodic geodesics through $U$ is dense in $\R$. In Section~\ref{Strong stable manifolds} we prove that a geodesic, which has the same endpoint in $\partial\Omega$ as a biproximal periodic geodesic $\gamma$, must in the quotient wrap around closer and closer to $\gamma$ (see Figure~\ref{figure_mixing}). In Section~\ref{Proof of Mixing} we prove Theorem~\ref{mixing} using Sections~\ref{Endpoints of biproximal periodic geodesics are smooth}, \ref{density} and \ref{Strong stable manifolds}, and classical dynamical arguments. In Section~\ref{symmetric_convex} we study the non-wandering set of the geodesic flow on higher-rank irreducible compact convex projective manifolds, and prove Proposition~\ref{symmetric_mixing}. In Appendix~\ref{appendix} we fill in a missing detail in Crampon's original proof of a useful technical lemma in convex projective geometry.

\paragraph*{Acknowledgements}
I am grateful to Yves Benoist, Harrison Bray, Jean-Philippe Burelle, Olivier
Glorieux, Ludovic Marquis, Fr\'ed\'eric Paulin, and Barbara Schapira for helpful discussions and com-
ments. I thank my advisor Fanny Kassel for her time, help, advice and encouragements. This project
received funding from the European Research Council (ERC) under the European Union’s Horizon
2020 research and innovation programme (ERC starting grant DiGGeS, grant agreement No 715982).

\section{Reminders and basic facts}\label{Reminders}

\subsection{Properly convex open subsets of \texorpdfstring{$\PR(\R^{d+1})$}{PRd} and their geodesic flow}\label{distance}

In the whole paper we fix a real vector space $V=\R^{d+1}$. Let $\Omega\subset \PR(V)$ be a properly convex open set. Recall that $\Omega$ admits an $\Aut(\Omega)$-invariant proper metric called the \emph{Hilbert metric} and defined by the following formula: for $(a,x,y,b)\in\partial\Omega\times\Omega\times\Omega\times\partial\Omega$ aligned in this order (see Figure~\ref{figure_distance}),
\[d_\Omega(x,y)=\frac{1}{2}\log([a,x,y,b]),\]
where $[a,x,y,b]$ is the cross-ratio of the four points, normalised so that $[0,1,t,\infty]=t$. 

\begin{figure}
\centering
\begin{tikzpicture}[scale=2]
\coordinate (a) at (-0.7,-0.1);
\coordinate (b) at (1.8,0.1);
\coordinate (c) at (-0.4,-1);
\coordinate (d) at (1.56,-0.6);
\coordinate (e) at (-0.3,0.5);
\coordinate (f) at (1.7,-.3);
\coordinate (g) at ($ (f)!.5!(d) +(.001,0) $);
\coordinate (h) at (-.69,-.5);
\coordinate (i) at ($ (h)!.5!(c) +(-.001,0) $);

\coordinate (z) at (intersection of c--f and h--d);
\coordinate (zp) at ($ (z)!.5!(d) $);
\coordinate (zm) at ($ (c)!.5!(z) $);

\coordinate (x) at ($ (a)!0.2!(b) $);
\coordinate (y) at ($ (a)!0.6!(b) $);
\coordinate (v) at ($ (x)!0.3!(y) $);
\coordinate (phitv) at ($ (y)!.3!(b) $);

\length{e,a,h,i,c,d,g,f,b}
\cvx{e,a,h,i,c,d,g,f,b}{1}

\draw (a)--(b);

\draw (a) node{$\bullet$} node[left]{$a$};
\draw (x) node{$\bullet$} node[above]{$x$};
\draw (y) node{$\bullet$} node[above]{$y$};
\draw (b) node{$\bullet$} node[right]{$b$};
\draw (c) node[green!60!black]{$\bullet$};
\draw (d) node[green!60!black]{$\bullet$};
\draw (f) node[green!60!black]{$\bullet$};
\draw (h) node[green!60!black]{$\bullet$};
\draw (z) node[green!60!black]{$\bullet$};
\draw (zp) node[green!60!black]{$\bullet$}node[green!60!black,below]{$y'$};
\draw (zm) node[green!60!black]{$\bullet$}node[green!60!black,below]{$x'$};
\draw (e) node[above left]{$\Omega$};
\draw [->, thick] (x) -- (v) node[below]{$v$};
\draw [->, thick] (y) -- (phitv) node[below]{$\phi_tv$};
\draw [green!60!black,dashed,very thin] (c)--(f);
\draw [green!60!black,dashed,very thin] (h)--(d);
\draw [green!60!black] (zm)--(z);
\draw [green!60!black] (z)--(zp);
\draw [green] (d)--(f);
\draw [green] (h)--(c);

\end{tikzpicture}
\caption{The Hilbert metric and the geodesic flow ($t=d_\Omega(x,y)$)}\label{figure_distance}
\end{figure}
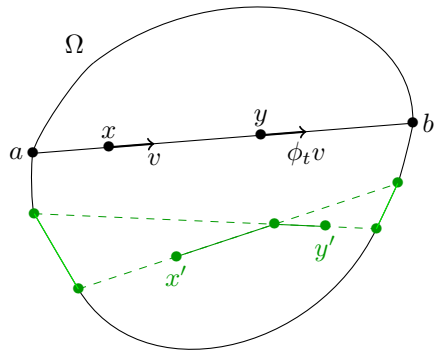

Recall that if $\Omega$ is an ellipsoid, then $(\Omega,d_\Omega)$ is the Klein model of the real hyperbolic space of dimension $d$, and if $\Omega$ is a $d$-simplex, then $(\Omega,d_\Omega)$ is isometric to $\R^d$ endowed with a hexagonal norm.

Any discrete subgroup $\Gamma\subset\PGL(V)$ of automorphisms of $\Omega$ preserves $d_\Omega$, hence must act properly discontinuously on $\Omega$ and therefore the quotient $M=\Omega/\Gamma$ is an orbifold. Furthermore, $M$ is a manifold if the action is free (\ie if $\Gamma$ is torsion-free, by Brouwer's fixed point theorem, applied to the convex hull of a finite orbit of a torsion element). Note that by Selberg's lemma \cite{selberg_lemma}, if $\Gamma$ is finitely generated, then it admits a torsion-free finite-index subgroup. We will work in general with $\Gamma$ not necessarily torsion-free, so we set the notation $T^1M=T^1\Omega/\Gamma$. 

The intersections of $\Omega$ with projective lines can be parametrised to be geodesics, which are said to be \emph{straight}. However, an interesting feature in the non-strictly convex case is that when there are two coplanar non-trivial segments in the boundary $\partial\Omega$, one can construct geodesics which are not straight, see for instance the broken green segment between $x'$ and $y'$ in Figure~\ref{figure_distance}. In order to define the geodesic flow we only take into account straight geodesics: for $v$ in $T^1\Omega$, let $t\mapsto c(t)$ be the parametrisation of the projective line generated by $v$ such that $c$ is an isometric embedding from $\R$ to $\Omega$ and $c'(0)=v$. For $t\in\R$ we set $\phi_t(v)=c'(t)\in T^1\Omega$. See Figure~\ref{figure_distance}.

The geodesic flow on $T^1M=T^1\Omega/\Gamma$ is well defined because the two actions of $\Aut(\Omega)$ and $(\phi_t)_{t\in\R}$ on $T^1\Omega$ commute. We denote by $\pi :T^1M\rightarrow M$ and $\pi : T^1\Omega\rightarrow \Omega$ the natural projections, and we consider the following metric on $T^1\Omega$:
\begin{align}\label{eq:dT1Om}
% \forall x,y\in M, \ d_M(x,y) & = \min\{d_\Omega(\tilde{x},\tilde{y}):\ \tilde{x},\tilde{y}\in\Omega \text{ lifts of } x,y\},\\
 \forall v,w\in T^1\Omega, \quad d_{T^1\Omega}(v,w) & =  \max_{0\leq t\leq 1}d_\Omega(\pi\phi_tv,\pi\phi_tw).
% \forall v,w\in T^1M, \ d_{T^1M}(v,w) & = \min\{d_{T^1\Omega}(\tilde{v},\tilde{w}):\ \tilde{v},\tilde{w}\in T^1\Omega \text{ lifts of } v,w\}.
\end{align}

The following remark is a direct consequence of the definition of the Hilbert metric.

\begin{rqq}\label{Hilbert VS Euclidean}
Let $\Omega\subset\PR(V)$ be a properly convex open set, and fix an affine chart containing $\overline{\Omega}$. Then 
\[\overline{B}_\Omega(x,r) \subset (1-e^{-2r})(\overline{\Omega}-x) +x\]
for all $x\in\Omega$ and $r>0$, where $\overline{B}_\Omega(x,r)$ is the closed ball of radius $r$, centred at $x$, for the metric $d_\Omega$, and $(1-e^{-2r})(\overline{\Omega}-x) +x$ is the image of $\overline{\Omega}$ under the homothety (of the affine chart) centred at $x$ and with ratio $1-e^{-2r}$.
\end{rqq}

\subsection{Smooth and extremal points of the boundary}\label{extreme_yet_smooth}

 We recall here some terminology on convex sets. Let $\Omega\subset \PR(V)$ be a properly convex open set. Let $\xi\in\partial\Omega$ be a point of the boundary.
 \begin{itemize}
  \item A \emph{supporting hyperplane} of $\Omega$ at $\xi$ is a hyperplane which contains $\xi$ and does not intersect $\Omega$. Note that there always exists such a hyperplane.
  \item As in Definition~\ref{def rank-one}, we shall say that $\xi$ is a \emph{smooth} point of $\partial\Omega$ (this is commonly also called a $\mathcal{C}^1$ point) if there is only one supporting hyperplane of $\Omega$ at $\xi$, which we then denote by $T_\xi\partial\Omega$.
  \item The point $\xi$ is said to be \emph{extremal} if it is not contained in the relative interior of a non-trivial segment contained in the boundary $\partial\Omega$.
  \item Observe that $\Omega$ is \emph{strictly convex} if and only if all points of $\partial\Omega$ are extremal.
  \item As in Definition~\ref{def rank-one}, we shall say that $\xi$ is \emph{strongly extremal} if it is not contained in any non-trivial segment contained in the boundary $\partial\Omega$. Observe that the endpoint of a segment contained in $\partial\Omega$ may be extremal, but is never strongly extremal.
 \end{itemize}

\subsection{Proximal linear transformations}\label{Proximal linear transformations}

In this section we recall the notion of a proximal linear transformation, which was used in the definition of the proximal limit set $\Lambda_\Gamma$ and the biproximal unit tangent bundle $T^1M_{bip}$ in Section~\ref{Main result}.

\begin{nota}\label{oplus}
If $W_1$ and $W_2$ are two subspaces of $V$ such that $W_1\cap W_2=\{0\}$, we write $W_1\oplus W_2\subset V$ for their direct sum and $\PR(W_1)\oplus\PR(W_2)=\PR(W_1\oplus W_2)$ for its projectivisation. In particular, if $x,y\in \PR(V)$ are two distinct points, we write $x\oplus y$ for the projective line through $x$ and $y$.
\end{nota}

\begin{defi}\label{def_prox}
A linear transformation $g\in \End(V)$ is \emph{proximal} if it has exactly one complex eigenvalue with maximal modulus among all eigenvalues, and if this eigenvalue has multiplicity $1$. The associated eigenline in $\PR(V)$ is the attracting fixed point of $g$ and is denoted by $x_g^+$.

An invertible linear transformation $g\in\GL(V)$ is said to be \emph{biproximal} if $g$ and $g^{-1}$ are both proximal. The attracting fixed point of $g^{-1}$ is the repelling fixed point of $g$ and is denoted by $x_g^-$. The projective line $x_g^+\oplus x_g^-$ (see Notation~\ref{oplus}) is the \emph{axis} of $g$ and is denoted by $\axis(g)$. The $g$-invariant complementary subspace to the axis of $g$ is denoted by $x_g^0$. Note that the notions of biproximality, attracting/repelling fixed point, and axis, are well defined for the image of $g$ in $\PGL(V)$.
\end{defi}

\begin{rqq}\label{prox_are_open}
The set of proximal linear transformations is open in $\End(V)$, and the map sending a proximal linear transformation to the pair (attracting fixed point, associated eigenvalue) is continuous.
\end{rqq}

\begin{rqq}\label{minimality}
 As observed by Benoist \cite[Lem.\,3.6.ii]{BenoistPropAsymp}, for any subgroup $\Gamma\subset\PGL(V)$ which is irreducible (\ie preserves no proper subspace of $\PR(V)$) and contains a proximal element, the proximal limit set is the smallest closed $\Gamma$-invariant non-empty subset of $\PR(V)$; in particular, the action of $\Gamma$ on  $\Lambda_\Gamma$ is minimal (\ie any orbit is dense). Indeed, consider any proximal element $\gamma\in\Gamma$, and let $\PR(W)\subset \PR(V)$ be the $\gamma$-invariant complementary subspace  to $x_\gamma^+$. By irreducibility, any closed $\Gamma$-invariant non-empty subset $X\subset\PR(V)$ contains a point $x$ outside $\PR(W)$, and then $x_\gamma^+$, which is the limit of the sequence $(\gamma^nx)_{n\in\N}$, belongs to $X$.
\end{rqq}

\subsection{Periodic geodesics and automorphisms of \texorpdfstring{$\Gamma$}{Gamma}}\label{Periodic geodesics and automorphisms of Gamma}

In this section we recall the link between periodic geodesics in $T^1\Omega/\Gamma$ and conjugacy classes of $\Gamma$. Let $\Omega\subset \PR(V)$ be a properly convex open set. For $g\in \GL(V)$, we denote by $\lambda_1(g)\geq\dots\geq\lambda_{d+1}(g)$ the non-increasing sequence of logarithms of moduli of eigenvalues of $g$; we set

\begin{equation}\label{l(g)}
\ell(g):=\frac{1}{2}(\lambda_1(g)-\lambda_{d+1}(g)).
\end{equation}
Observe that $\ell(g)$ only depends on the class of $g$ in $\PGL(V)$. If $g$ preserves $\Omega$, then
\begin{equation}\label{longueur de translation}
\ell(g)=\inf\{d_\Omega(x,g\cdot x): \ x\in\Omega\}\geq 0.
\end{equation}
The right-hand side of \eqref{longueur de translation} is called the \emph{translation length} of $g$. See \cite[Prop.\,2.1]{CLT2015cvxisom} for a proof.

Combined with an elementary computation, \eqref{longueur de translation} yields:

\begin{fait}\label{period_is_translation_length} Let $\Omega\subset\PR(V)$ be a properly convex open set, let $\Gamma\subset\Aut(\Omega)$ be a discrete subgroup, and let $M=\Omega/\Gamma$. Then for any infinite geodesic $\tilde{c}$ in $\Omega$ that lifts a periodic straight geodesic $c$ of $M$, there is an automorphism $\gamma\in\Gamma$ which preserves it and acts by translation on it. Let $\tilde{\gamma}\in\GL(V)$ be a lift of $\gamma$. The endpoints in $\partial\Omega$ of $\tilde{c}$ are fixed by $\gamma$, the associated eigenvalues of $\tilde{\gamma}$ are $\exp(\lambda_1(\tilde{\gamma}))$ and $\exp(\lambda_{d+1}(\tilde{\gamma}))$, and the length of $c$ is the translation length of $\gamma$. If furthermore these endpoints are extremal, then $\gamma$ is biproximal.
\end{fait}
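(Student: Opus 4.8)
The plan is to deduce everything from formula \eqref{longueur de translation} and its proof. First I would show that the periodic straight geodesic $c$, being compact, lifts to an axis: pick a lift $\tilde c$ and a point $x_0$ on it; since $c$ is periodic of length $T>0$, the point $\tilde c(T)$ projects to the same point of $M$ as $\tilde c(0)$, so there is $\gamma\in\Gamma$ with $\gamma\tilde c(0)=\tilde c(T)$. Because $\gamma$ preserves the projective line $L$ carrying $\tilde c$ (it sends the straight geodesic $\tilde c$ to another straight geodesic with the same image in $M$, and two straight geodesics asymptotic along $c$ must coincide as the Hilbert metric is uniquely geodesic on lines), and because $\gamma$ moves $\tilde c(0)$ a distance $T$ towards $\tilde c(+\infty)$, the automorphism $\gamma$ acts on $\tilde c$ as translation by $T$. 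In particular $d_\Omega(x_0,\gamma x_0)=T$ and this is the minimal displacement of $\gamma$, so by \eqref{longueur de translation} we get $\ell(\gamma)=T$, and $T$ is the translation length of $\gamma$.

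Next I would analyse the eigenvalues. The two endpoints $\xi^\pm=\tilde c(\pm\infty)\in\partial\Omega$ are fixed by $\gamma$, hence lift to eigenlines $\R v^\pm$ of $\tilde\gamma$ with eigenvalues $\mu^\pm$. The restriction of $\tilde\gamma$ to the plane $P=\R v^+\oplus\R v^-$ (which projectivises to $L$) acts on the projective segment $L\cap\Omega$, and a direct cross-ratio computation — this is the "elementary computation" invoked before the Fact — shows that for $x\in L\cap\Omega$ one has $d_\Omega(x,\tilde\gamma x)=\tfrac12\bigl|\log|\mu^+/\mu^-|\bigr|$, independent of $x$; combined with the previous paragraph this forces $\tfrac12\bigl|\log|\mu^+|-\log|\mu^-|\bigr|=T=\ell(\tilde\gamma)=\tfrac12(\lambda_1(\tilde\gamma)-\lambda_{d+1}(\tilde\gamma))$. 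Since $\lambda_1\ge \log|\mu^\pm|\ge\lambda_{d+1}$, the only way the gap between $\log|\mu^+|$ and $\log|\mu^-|$ can equal the full gap $\lambda_1-\lambda_{d+1}$ is if $\{\log|\mu^+|,\log|\mu^-|\}=\{\lambda_1(\tilde\gamma),\lambda_{d+1}(\tilde\gamma)\}$; relabelling so that $\xi^+$ corresponds to $\lambda_1$, the eigenvalue at $\xi^+$ has maximal modulus and the one at $\xi^-$ has minimal modulus, which is the claimed identification of the associated eigenvalues.

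Finally, assume $\xi^\pm$ are extremal; I must upgrade "has an eigenvalue of maximal modulus" to "proximal", i.e. that $\lambda_1$ is attained with multiplicity one and only along $\R v^+$ (and dually for $\gamma^{-1}$). Suppose not: then the sum $E$ of the (generalised) eigenspaces of $\tilde\gamma$ whose eigenvalue has modulus $e^{\lambda_1}$ has dimension $\ge 2$, and $\PR(E)\cap\overline\Omega$ is a nonempty compact convex subset of $\partial\Omega$ of positive dimension containing $\xi^+$. Indeed, for any $x\in\Omega$ the normalised iterates $\tilde\gamma^n x/\|\tilde\gamma^n x\|$ accumulate inside $\PR(E)$, and $\overline\Omega$ is $\tilde\gamma$-invariant, so $\PR(E)\cap\overline\Omega$ is nonempty; it contains the segment one obtains from any non-extremal accumulation configuration. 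This exhibits a nontrivial segment of $\partial\Omega$ through $\xi^+$, contradicting extremality of $\xi^+$. The same argument applied to $\gamma^{-1}$ and $\xi^-$ shows $\gamma^{-1}$ is proximal, so $\gamma$ is biproximal. The main obstacle is precisely this last step — making rigorous that a higher-dimensional top modulus eigenspace forces a genuine boundary segment through $\xi^+$, which requires a careful argument with the $\tilde\gamma$-dynamics on $\overline\Omega\subset\PR(V)$ and the fact that $\Omega$ is properly convex (so that one can work in an affine chart and control the positions of the accumulation points); everything else reduces to the already-cited facts \eqref{longueur de translation} and \eqref{l(g)} together with a one-dimensional cross-ratio computation.
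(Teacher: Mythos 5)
Your first two paragraphs carry out exactly the ``elementary computation'' that the paper alludes to (the paper gives no proof of this Fact beyond combining it with \eqref{longueur de translation}): the cross-ratio identity $d_\Omega(x,\tilde\gamma x)=\tfrac12\bigl|\log|\mu^+/\mu^-|\bigr|$ for $x$ on the axis, followed by the sandwich $\ell(\gamma)\leq d_\Omega(x_0,\gamma x_0)=\tfrac12\bigl|\log|\mu^+|-\log|\mu^-|\bigr|\leq\tfrac12(\lambda_1-\lambda_{d+1})=\ell(\gamma)$, which forces all inequalities to be equalities and identifies the two eigenvalues. Two small expository points: the assertion in your first paragraph that $x_0$ realises the minimal displacement is only \emph{proved} by the sandwich of the second paragraph, so the order of claims should be reversed; and ``the Hilbert metric is uniquely geodesic on lines'' is not the right reason why $\gamma$ preserves the projective line $L$ (it is false that $d_\Omega$ is uniquely geodesic in general) --- the clean argument is that periodicity in $T^1M$ gives $\gamma\tilde v=\phi_T\tilde v$ for the tangent vector $\tilde v$ of $\tilde c$, and these two vectors generate the same projective line.

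The last step (extremal endpoints imply biproximality) has a genuine gap, which you partly acknowledge. Your mechanism --- normalised forward iterates of points of $\Omega$ accumulate in $\PR(E)$ and produce a non-trivial boundary segment --- fails precisely in the case that is hardest to rule out, namely when $e^{\lambda_1}$ is attained by the single eigenvalue $\mu^+$ but with a Jordan block of size $k\geq 2$. In that case $\tilde\gamma^n x/\|\tilde\gamma^n x\|$ converges, for every $x\in\Omega$, to the single point $\xi^+$ (the dominant term of $\tilde\gamma^n$ is $\binom{n}{k-1}\mu^{n-k+1}N^{k-1}$, a rank-one map with image $\R v^+$), so no segment ever appears and extremality of $\xi^+$ is not contradicted. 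The contradiction in this case must come from the \emph{kernel}, not the image: any limit $\pi$ of $[\tilde\gamma^n]$ in $\PR(\End(V))$ satisfies $\PR(\Ker\pi)\cap\Omega=\emptyset$ (this is the standard fact underlying the paper's Fact~\ref{contract}), while a Jordan block of size $\geq 2$ at $\mu^+$ forces $v^+\in\Ker(N^{k-1})\subset\Ker\pi$ and of course $v^-\in\Ker\pi$, so $\PR(\Ker\pi)$ would contain $\axis(\gamma)$, contradicting the hypothesis that the axis meets $\Omega$. Only after Jordan blocks are excluded does the image argument apply: then $\pi$ is (up to phases) the projection onto the sum $E$ of the top-modulus eigenspaces, and $\pi(\Omega)$ is a \emph{relatively open} convex subset of $\PR(E)\cap\partial\Omega$ containing $\xi^+$.

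This relative openness is the second point you cannot skip: the Fact assumes the endpoints are \emph{extremal}, not \emph{strongly extremal}, and the paper explicitly notes that an extremal point may be the endpoint of a non-trivial boundary segment. So exhibiting ``a segment of $\partial\Omega$ through $\xi^+$'' is not enough; you must put $\xi^+$ in the \emph{relative interior} of such a segment, which is what the openness of $\pi(\Omega)$ in $\PR(\mathrm{Im}\,\pi)$ (coming from $\pi(p)=\xi^+$ for the interior point $p=\pi v$ on the axis, together with surjectivity of the linear map onto its image) delivers. With these two repairs the strategy goes through; as written, the dynamical step is not yet a proof.
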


By Fact~\ref{period_is_translation_length}, rank-one elements of $\Gamma$ (Definition~\ref{def rank-one}) are biproximal. Hence rank-one periodic vectors of $T^1M$ belong to the biproximal unit tangent bundle $T^1M_{bip}$ (Definition~\ref{T1Omegabip}).

\begin{defi}\label{biprox_orbit}
 Let $\Omega\subset \PR(V)$ be a properly convex open set and $\Gamma\subset\Aut(\Omega)$ a discrete subgroup. Let $\gamma\in\Gamma$ be a biproximal element whose axis meets $\Omega$. Then the periodic geodesic associated to $\gamma$ is said to be \emph{biproximal}, and the unit tangent vectors along this geodesic are said to be \emph{biproximal} periodic.
\end{defi}

There are cases where $\gamma\in\Gamma$ is biproximal but its axis does not intersect $\Omega$ (\eg when $\Omega$ is a triangle, or is symmetric as in Section~\ref{symmetric_convex}). Then we cannot make sense of a straight periodic geodesic associated to~$\gamma$.

\subsection{Density of biproximal geodesics}

We gather here two results of Benoist which imply that biproximal periodic vectors are dense in $T^1M_{bip}$.

\begin{fait}[\!{\!\cite[Prop.\,1.1]{benoist2000automorphismes}} \& {\cite[Lem.\,3.6.iv]{BenoistPropAsymp}}]\label{densitybis}
 Let $\Gamma\subset \PGL(V)$ be a strongly irreducible subgroup.
\begin{enumerate}
\item \label{Item : proximalite} If $\Gamma$ preserves a properly convex open set $\Omega\subset\PR(V)$, then it contains a proximal element.
\item \label{Item : densite bip} If $\Gamma$ contains a proximal element, then the following subset is dense in $\Lambda_\Gamma\times\Lambda_\Gamma$:
 \[\{(x_\gamma^+,x_\gamma^-)\in \Lambda_\Gamma\times\Lambda_\Gamma: \gamma\in \Gamma \text{ biproximal}\}.\]
\end{enumerate}
\end{fait}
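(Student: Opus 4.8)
The plan is to obtain both statements from the linear dynamics of $\Gamma$ on $\PR(V)$, following Benoist; here is the shape of the two arguments.

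For part~\ref{Item : proximalite}, I would lift $\Omega$ to a sharp open convex cone $\mathcal{C}\subset V$ preserved by a lift of $\Gamma$ to $\GL(V)$, so that $\Gamma$ also preserves the dual sharp open cone $\mathcal{C}^{*}\subset V^{*}$. Strong irreducibility forces $\Gamma$ to be infinite (a finite group preserves the orbit of any point, a finite union of proper projective subspaces when $\dim V\geq 2$, the case $\dim V=1$ being trivial), hence unbounded since it is discrete. Choosing $\gamma_n\in\Gamma$ with $\|\gamma_n\|\to\infty$ and extracting, $\gamma_n/\|\gamma_n\|\to\pi\in\End(V)$ with $\|\pi\|=1$; passing to the limit in $\gamma_n\overline{\mathcal{C}}=\overline{\mathcal{C}}$, in $\gamma_n^{-1}\overline{\mathcal{C}}=\overline{\mathcal{C}}$, and in the dual relations shows that $\pi$ maps $\overline{\mathcal{C}}$ into itself and $\pi^{*}$ maps $\overline{\mathcal{C}^{*}}$ into itself. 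Now fix such a limit $\pi$ of minimal rank $r$, taken over all unbounded sequences in $\Gamma$. The crux — and the step I expect to be hardest — is to prove $r=1$: the closure $\overline{\R_{>0}\Gamma}\subset\End(V)$ is a semigroup containing $\pi$ and every $\pi\gamma\pi$ with $\gamma\in\Gamma$; the induced cone $\overline{\mathcal{C}}\cap\mathrm{Im}\,\pi$ is sharp with non-empty interior in $\mathrm{Im}\,\pi$, and similarly on the dual side; and if $r\geq 2$, then — using that strong irreducibility prevents $\Gamma$ from acting trivially on $\PR(\mathrm{Im}\,\pi)$ — one composes $\pi$ suitably with elements of $\Gamma$ to obtain a nonzero element of $\overline{\R_{>0}\Gamma}$ of rank $<r$, contradicting minimality. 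A rank-one limit has the form $\pi=x\otimes\varphi$ with $x\in\overline{\mathcal{C}}\setminus\{0\}$ and $\varphi\in\overline{\mathcal{C}^{*}}\setminus\{0\}$, so $\varphi(x)\geq 0$; after replacing $\gamma_n$ by $g\gamma_n g'$ for well-chosen $g,g'\in\Gamma$ (strong irreducibility again) I may assume $\varphi(x)>0$, and then for large $n$ the transformation $\gamma_n$ has a simple eigenvalue of modulus $\sim\varphi(x)\|\gamma_n\|$ strictly dominating the others, with eigenline close to $x$; by Remark~\ref{prox_are_open}, $\gamma_n$ is proximal.

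For part~\ref{Item : densite bip}, I assume $\Gamma$ contains a proximal element. By Remark~\ref{minimality}, $\Lambda_\Gamma$ is the unique non-empty minimal closed $\Gamma$-invariant subset of $\PR(V)$, and $\Gamma$ acts minimally on it; moreover $\Lambda_\Gamma$ is infinite, since otherwise the stabiliser of an attracting fixed point would be a finite-index subgroup of $\Gamma$ preserving a line, against strong irreducibility. A standard ping-pong argument with high powers of proximal elements of $\Gamma$ in general position (carried out inside the Zariski closure, whose identity component acts irreducibly because $\Gamma$ is strongly irreducible) first produces a \emph{biproximal} element $\gamma_1\in\Gamma$ whose fixed points $x_{\gamma_1}^{\pm}$ and invariant hyperplanes are in general position. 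Then, given distinct points $\xi,\eta\in\Lambda_\Gamma$ and $\varepsilon>0$, minimality furnishes infinitely many $g\in\Gamma$ with $g\,x_{\gamma_1}^{+}$ within $\varepsilon$ of $\xi$ and infinitely many $h\in\Gamma$ with $h\,x_{\gamma_1}^{-}$ within $\varepsilon$ of $\eta$; among these, by Zariski density, I may choose $g$ and $h$ so that the relevant invariant subspaces of $g\gamma_1 g^{-1}$ and of $h\gamma_1 h^{-1}$ are pairwise transverse. For $N$ large, the element $\gamma:=(g\gamma_1 g^{-1})^{N}(h\gamma_1 h^{-1})^{N}$ is then biproximal, with $x_\gamma^{+}\to g\,x_{\gamma_1}^{+}$ and $x_\gamma^{-}\to h\,x_{\gamma_1}^{-}$ as $N\to\infty$, by the standard North--South estimate. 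Since $x_\gamma^{+},x_\gamma^{-}\in\Lambda_\Gamma$ automatically and pairs of distinct points are dense in $\Lambda_\Gamma\times\Lambda_\Gamma$, it follows that $\{(x_\gamma^{+},x_\gamma^{-}):\gamma\in\Gamma\text{ biproximal}\}$ is dense in $\Lambda_\Gamma\times\Lambda_\Gamma$.

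In short, the real work lies in the rank-one reduction of part~\ref{Item : proximalite} — tracking how ranks of the limits $\gamma_n/\|\gamma_n\|$ behave under composition, and exploiting sharpness of $\mathcal{C}$ and $\mathcal{C}^{*}$ to exclude rank $\geq 2$ — whereas part~\ref{Item : densite bip} combines minimality (Remark~\ref{minimality}), openness of proximality (Remark~\ref{prox_are_open}), and ping-pong.
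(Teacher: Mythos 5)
The paper does not prove this statement at all: it is recorded as a \emph{Fact} and attributed to Benoist (\cite[Prop.~1.1]{benoist2000automorphismes} for part~\ref{Item : proximalite} and \cite[Lem.~3.6.iv]{BenoistPropAsymp} for part~\ref{Item : densite bip}), so there is no internal proof to compare against. Your proposal reconstructs the shape of Benoist's original arguments, and the overall architecture (cone lift and dual cone, limits of normalised elements, minimal rank, then minimality plus ping-pong for the density statement) is the right one.

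That said, as a proof the proposal leaves the two genuinely hard steps as black boxes. First, in part~\ref{Item : proximalite}, the rank-one reduction is the entire content of Benoist's proposition, and your sentence ``one composes $\pi$ suitably with elements of $\Gamma$ to obtain a nonzero element of rank $<r$'' does not explain how the rank actually drops: the natural compositions $\pi\gamma\pi$ are again limits of normalised elements of $\Gamma$, so by your own minimality they have rank exactly $r$ whenever they are nonzero; producing something of strictly smaller rank requires a further idea (Benoist exploits the sharp cone $\overline{\mathcal{C}}\cap\mathrm{Im}\,\pi$ and its dual to find a contracting dynamic \emph{inside} $\PR(\mathrm{Im}\,\pi)$), and this is exactly where you stop. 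Second, in part~\ref{Item : densite bip}, ping-pong among high powers of \emph{proximal} elements in general position produces proximal elements, not biproximal ones: to control the inverse you need to know that $\Gamma$ acts proximally on $V^*$ as well (equivalently that some element has a simple eigenvalue of \emph{minimal} modulus), and deducing this from the mere existence of one proximal element is itself a nontrivial step of \cite{BenoistPropAsymp} (via the Zariski closure), not a consequence of ping-pong. Once a biproximal $\gamma_1$ with transverse invariant subspaces is granted, your conjugation-and-powers argument for density is correct and standard; it is essentially the same mechanism the paper itself uses in Lemma~\ref{freesubgroupbis}.
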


\begin{cor}\label{bipsaredense}
 Let $\Omega\subset \PR(V)$ be a properly convex open set and $\Gamma\subset\Aut(\Omega)$ a strongly irreducible discrete subgroup. Denote by $M$ the quotient $\Omega/\Gamma$, and suppose that $T^1M_{bip}$ is non-empty. Then biproximal periodic geodesics exist and their images in $T^1M$ are dense in $T^1M_{bip}$.
\end{cor}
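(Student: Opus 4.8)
The plan is to deduce the statement from Fact~\ref{densitybis} together with the definition of $T^1M_{bip}$. First I would show that biproximal periodic geodesics exist. By hypothesis $T^1M_{bip}\neq\emptyset$, so there is a vector $v\in T^1\Omega$ with $\phi_{\pm\infty}v\in\Lambda_\Gamma$; in particular $\Lambda_\Gamma$ contains at least two distinct points, hence $\Lambda_\Gamma\times\Lambda_\Gamma$ contains a pair of distinct points. Since $\Gamma$ preserves a properly convex open set and is strongly irreducible, Fact~\ref{densitybis}.\ref{Item : proximalite} gives a proximal element in $\Gamma$, so Fact~\ref{densitybis}.\ref{Item : densite bip} applies: pairs $(x_\gamma^+,x_\gamma^-)$ for biproximal $\gamma\in\Gamma$ are dense in $\Lambda_\Gamma\times\Lambda_\Gamma$. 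Choosing such a $\gamma$ with $(x_\gamma^+,x_\gamma^-)$ close enough to $(\phi_{+\infty}v,\phi_{-\infty}v)$, the axis $x_\gamma^+\oplus x_\gamma^-$ is a projective line joining two distinct boundary points that is close to the line generated by $v$; since the latter meets $\Omega$ and meeting $\Omega$ is an open condition on lines, $\axis(\gamma)$ meets $\Omega$, so by Definition~\ref{biprox_orbit} it supports a biproximal periodic geodesic in $M$.

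Next, for density, fix $w\in T^1M_{bip}$ and a lift $\tilde w\in T^1\Omega$ with $\phi_{\pm\infty}\tilde w\in\Lambda_\Gamma$, and let $\varepsilon>0$. I want a biproximal element $\gamma\in\Gamma$ whose axis meets $\Omega$ and such that some biproximal periodic vector along $\axis(\gamma)$ lies within $\varepsilon$ of $\tilde w$ in $d_{T^1\Omega}$ (which then descends to $T^1M$). Again by Fact~\ref{densitybis}.\ref{Item : densite bip} pick biproximal $\gamma\in\Gamma$ with $(x_\gamma^+,x_\gamma^-)$ as close as we like to $(\phi_{+\infty}\tilde w,\phi_{-\infty}\tilde w)$. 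Then $\axis(\gamma)$ is a projective line close to the line $\mathbb{P}(\tilde w)$ generated by $\tilde w$; as above it meets $\Omega$ for $(x_\gamma^+,x_\gamma^-)$ close enough to $(\phi_{+\infty}\tilde w,\phi_{-\infty}\tilde w)$. Parametrising $\axis(\gamma)\cap\Omega$ as a straight geodesic, continuity of the Hilbert metric and of the geodesic flow on pairs of endpoints (both endpoints lying in $\partial\Omega$) lets me select a unit tangent vector $v_\gamma$ along $\axis(\gamma)\cap\Omega$ with $d_{T^1\Omega}(v_\gamma,\tilde w)<\varepsilon$, up to sliding along the flow and/or applying an element of $\Gamma$ to match basepoints; the image of $v_\gamma$ in $T^1M$ is biproximal periodic by Definition~\ref{biprox_orbit} and lies within $\varepsilon$ of the image of $w$.

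The main technical point to handle carefully is the continuity statement: that a straight geodesic in $\Omega$ depends continuously on its pair of endpoints in $\partial\Omega$, in the metric $d_{T^1\Omega}$, uniformly near a given geodesic $\mathbb{P}(\tilde w)\cap\Omega$. This follows from the explicit cross-ratio formula for $d_\Omega$ and the fact that, near a line meeting $\Omega$, the two intersection points with $\partial\Omega$ vary continuously with the line; I would phrase this as: if a sequence of projective lines $L_n$ converges to $L$ and $L\cap\Omega\neq\emptyset$, then $L_n\cap\Omega\neq\emptyset$ for large $n$ and, after matching a basepoint, the unit-speed parametrisations converge uniformly on compact sets, hence in $d_{T^1\Omega}$. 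Everything else is a routine application of Fact~\ref{densitybis} and the definitions.
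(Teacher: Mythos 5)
Your proposal is correct and follows exactly the route the paper intends: the corollary is stated as a direct consequence of Fact~\ref{densitybis}, and your argument (approximate the endpoint pair of a lift of any $w\in T^1M_{bip}$ by attracting/repelling pairs of biproximal elements, use openness of $\Geod(\Omega)$ in $\partial\Omega^2$ to see the axes meet $\Omega$, and continuity of straight geodesics in their endpoints to conclude convergence in $d_{T^1\Omega}$) is the standard way to make that deduction precise.
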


\subsection{The non-wandering set}\label{The non-wandering set}

In this section we recall the definition of the non-wandering set and the link between the non-wandering set of the geodesic flow on $T^1M$ and the non-wandering set of the actions of $\Gamma$ and $\Aut(\Omega)$ on the space of geodesics of $\Omega$. This will be used in Section~\ref{symmetric_convex}.

\begin{defi}\label{def_NW}
 Let $X$ be a locally compact topological space equipped with a continuous action by a locally compact group $G$. The \emph{non-wandering set} $\NW(X,G)$ is the set of points $x$ in $X$ such that for any compact neighbourhood $U$ of $x$, the set $\{g\in G: gU\cap U\neq\emptyset\}$ is non-compact.
\end{defi}

In other words, it is the set of points all of whose neighbourhoods come back infinitely often under the action; we call such points \emph{non-wandering}. The non-wandering set is closed and $G$-invariant. Note that if $X$ is compact but $G$ is not, then the non-wandering set is non-empty. When $G$ is $\R$, \ie when we have a flow $(\phi_t)_{t\in\R}$ on $X$, observe that given a non-wandering point $x\in X$ and a neighbourhood $U$ of $x$, one can find arbitrarily large \emph{positive} times $t$ such that $\phi_t U\cap U\neq\emptyset$; indeed, for any $t\in\R$, if $\phi_t U\cap U\neq\emptyset$, then $\phi_{-t} U\cap U\neq\emptyset$.

In our setting, there are three non-wandering sets of interest for us. Let $\Omega\subset\PR(V)$ be a properly convex open set, let $\Gamma\subset\Aut(\Omega)$ be a closed subgroup of automorphisms, and let $M$ be the quotient $\Omega/\Gamma$. One can first consider the non-wandering set $\NW(T^1M,(\phi_t)_{t\in\R})$ of the geodesic flow.

\begin{rqq}\label{T1Mbip<NW}
Any vector of $T^1M$ which is tangent to a periodic straight geodesic belongs to the non-wandering set $\NW(T^1M,(\phi_t)_{t\in\R})$. As a consequence, if $\Gamma$ is strongly irreducible, then $T^1M_{bip}$ is contained in $\NW(T^1M,(\phi_t)_{t\in\R})$ by Corollary~\ref{bipsaredense}.
\end{rqq}

 Let us denote by $\Geod(\Omega)=T^1\Omega/(\phi_t)_{t\in\R}$ the set of straight geodesics of $\Omega$: it is an open subset of $\partial\Omega^2$, consisting of the pairs $(x,y)$ such that $x\neq y$ and the projective line through $x$ and $y$ meets $\Omega$. The group $\Gamma$ naturally acts on $\Geod(\Omega)$ and one can consider its non-wandering set $\NW(\Geod(\Omega),\Gamma)$. Finally, one can consider the two commutative and proper actions of $\Gamma$ and 
$\R$ (by the geodesic flow) on $T^1\Omega$, it yields the non-wandering set $\NW(T^1\Omega,\Gamma\times\R)$. All three of these non-wandering sets are actually identified in the following sense. Denote the canonical projections by $\pi_\R: T^1\Omega\rightarrow\Geod(\Omega)$ and $\pi_\Gamma:T^1\Omega\rightarrow T^1 M$. Then
\[\pi_\R^{-1}(\NW(\Geod(\Omega),\Gamma))=\NW(T^1\Omega,\Gamma\times\R)=\pi_\Gamma^{-1}(\NW(T^1M,(\phi_t)_{t\in\R})).\]
We will use this while studying symmetric properly convex open sets in Section~\ref{symmetric_convex}.

To end this section, we observe that the non-wandering set $\NW(T^1M,(\phi_t)_{t\in\R})$ is contained in another $(\phi_t)_{t\in\R}$-invariant subset $T^1M$, defined similarly to $T^1M_{bip}$, but using another limit set in the boundary. Recall that Danciger, Gu\'eritaud, and Kassel \cite[Def.\,1.10]{fannycvxcocpct} defined the \emph{full orbital limit set} $\Lambda_\Gamma^{orb}\subset\partial\Omega$ as the union, over all $x\in\Omega$, of the set of accumulation points of the orbit $\Gamma\cdot x$; the full orbital limit set always contains the proximal limit set. Similarly to $T^1M_{bip}$, we can consider 
\[T^1M_{core}:=\{v\in T^1\Omega : \phi_{\pm\infty}v\in\Lambda_\Gamma^{orb}\}/\Gamma\subset T^1M.\]

\begin{obs}\label{NW<T1Mcore}
 Let $\Omega\subset\PR(V)$ be a properly convex open set, let $\Gamma$ be a discrete group of automorphisms of $\Omega$, and denote by $M$ the quotient $\Omega/\Gamma$. Then
 \[\NW(T^1M,(\phi_t)_{t\in\R})\subset T^1M_{core}.\]
\end{obs}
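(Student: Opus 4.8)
The plan is to show directly that if $v\in T^1\Omega$ represents a non-wandering vector of the geodesic flow on $T^1M$, then both endpoints $\phi_{\pm\infty}v$ lie in the full orbital limit set $\Lambda_\Gamma^{orb}$. By the identifications of the three non-wandering sets recalled just before the statement, it suffices to work with $\NW(T^1\Omega,\Gamma\times\R)$: so fix $v\in T^1\Omega$ whose image is non-wandering. By symmetry (replacing $v$ by $-v$ swaps the two endpoints), it is enough to prove $\phi_{+\infty}v\in\Lambda_\Gamma^{orb}$.

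First I would unravel the definition of non-wandering. Let $x=\pi(v)\in\Omega$ and take a decreasing sequence of compact neighbourhoods $U_n$ of $v$ shrinking to $\{v\}$. Non-wandering means that for each $n$ there exist $\gamma_n\in\Gamma$ and $t_n\in\R$ with $(\gamma_n,t_n)$ leaving every compact subset of $\Gamma\times\R$ and $\gamma_n\phi_{t_n}U_n\cap U_n\neq\emptyset$. Since $\Gamma$ acts properly on $T^1\Omega$ and the flow is free and proper, one checks in the usual way that the $t_n$ must be unbounded; passing to a subsequence we may assume $t_n\to+\infty$ (using, as noted in the excerpt, that $\phi_tU\cap U\neq\emptyset$ iff $\phi_{-t}U\cap U\neq\emptyset$, so we can arrange positive times). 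Then there are $w_n\in U_n$ with $\gamma_n\phi_{t_n}w_n\in U_n$; as $n\to\infty$ we have $w_n\to v$ and $\gamma_n\phi_{t_n}w_n\to v$, hence $\pi(w_n)\to x$ and $\gamma_n\pi(\phi_{t_n}w_n)\to x$ in $\Omega$.

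Now I would extract the endpoint. Since $w_n\to v$ and $t_n\to+\infty$, convexity of $\Omega$ and continuity of the Hilbert geodesic through $w_n$ give $d_\Omega(\pi\phi_{t_n}w_n,\, \pi\phi_{t_n}v)$ bounded (the two geodesics stay a bounded Hilbert distance apart on a compact time interval around $0$, and by the standard fact that Hilbert geodesics do not diverge the bound persists for all positive times — or more simply, since $\pi\phi_{t_n}w_n$ lies on the segment from $\pi(w_n)$ to $\phi_{+\infty}w_n$ and $\phi_{+\infty}w_n\to\phi_{+\infty}v$, the point $\pi\phi_{t_n}w_n$ accumulates only on $\phi_{+\infty}v$ in $\overline\Omega$). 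Hence $\pi\phi_{t_n}w_n\to\phi_{+\infty}v$ in $\overline\Omega$. Applying $\gamma_n$ and using $\gamma_n\pi\phi_{t_n}w_n\to x\in\Omega$, we see that the orbit $\Gamma\cdot y$ of the point $y:=\pi\phi_{t_n}w_n$ — careful: $y$ depends on $n$, so instead fix one basepoint, say $x$ itself, and observe that $d_\Omega(\pi\phi_{t_n}w_n, \pi\phi_{t_n}v)$ bounded together with $\gamma_n$ moving $\pi\phi_{t_n}w_n$ back near $x$ forces $d_\Omega(\gamma_n\pi\phi_{t_n}v, x)$ bounded, so after a further subsequence $\gamma_n^{-1}x$ stays at bounded Hilbert distance from $\pi\phi_{t_n}v$, which converges to $\phi_{+\infty}v$. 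Therefore $\gamma_n^{-1}x\to\phi_{+\infty}v$ in $\overline\Omega$, exhibiting $\phi_{+\infty}v$ as an accumulation point of the orbit $\Gamma\cdot x$ with $x\in\Omega$. By definition $\phi_{+\infty}v\in\Lambda_\Gamma^{orb}$.

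The main obstacle I anticipate is the bookkeeping in the middle paragraph: controlling $d_\Omega(\pi\phi_{t_n}w_n,\pi\phi_{t_n}v)$ uniformly in $n$ as $t_n\to\infty$. Two geodesic rays that start close need not stay close in a general Hilbert geometry, so one cannot invoke nonpositive curvature naively; the clean fix is to argue at the level of endpoints in $\overline\Omega$ rather than distances — $\pi\phi_{t_n}w_n$ lies on the straight segment $[\pi w_n,\phi_{+\infty}w_n]$, and since $\pi w_n\to x$ and $\phi_{+\infty}w_n\to\phi_{+\infty}v$ while $t_n\to\infty$, any accumulation point of $(\pi\phi_{t_n}w_n)$ in $\overline\Omega$ is $\phi_{+\infty}v$. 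Combined with $\gamma_n(\pi\phi_{t_n}w_n)\to x$, applying $\gamma_n^{-1}$ to a suitable fixed point near $x$ then yields a sequence in $\Gamma\cdot x$ converging to $\phi_{+\infty}v$, which is exactly what is needed.
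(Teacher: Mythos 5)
Your plan, in particular the ``clean fix'' at the end, is essentially the paper's argument: show $\pi\phi_{t_n}w_n\to\phi_{+\infty}v$ in $\overline{\Omega}$ using only that $\pi\phi_{t_n}w_n$ lies on the segment from $\pi w_n\to\pi v$ to $\phi_{+\infty}w_n\to\phi_{+\infty}v$ at Hilbert distance $t_n\to\infty$ from the near endpoint, then transport back by $\gamma_n^{-1}$ and use the comparison between Hilbert and Euclidean metrics (Remark~\ref{Hilbert VS Euclidean}) to conclude $\gamma_n^{-1}x\to\phi_{+\infty}v$.

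Two things in the middle paragraph, however, are actually false and not just awkward bookkeeping, and it is worth being clear about why, since the whole point of this observation is that $\Omega$ need \emph{not} be strictly convex. First, there is no ``standard fact that Hilbert geodesics do not diverge'': two straight geodesic rays issuing from nearby points typically have Hilbert distance going to $+\infty$, and Lemma~\ref{crampon} only gives a two-sided bound involving the endpoint value $d_\Omega(c_1(T),c_2(T))$, which is useless when that blows up. Second, ``bounded Hilbert distance from a sequence converging to $\xi\in\partial\Omega$ forces convergence to $\xi$'' fails once $\partial\Omega$ contains a segment: in the square $(-1,1)^2$ the points $p_n=(1-\tfrac1n,0)$ and $q_n=(1-\tfrac1n,\tfrac12)$ satisfy $d_\Omega(p_n,q_n)=\tfrac12\log 3$ for all $n$, yet they converge to distinct boundary points. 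So the subsequence argument via ``$d_\Omega(\gamma_n\pi\phi_{t_n}v,x)$ bounded'' genuinely does not close. What saves you in the clean fix is that you actually have $d_\Omega(\pi\phi_{t_n}w_n,\gamma_n^{-1}x)\to 0$ (not merely bounded), and Remark~\ref{Hilbert VS Euclidean} converts ``Hilbert distance $\to 0$'' into ``Euclidean distance $\to 0$'' uniformly over $\Omega$; that is the one ingredient your final paragraph leaves implicit and should cite explicitly. With that, the clean-fix version coincides with the paper's proof; I would simply delete the detour through bounding $d_\Omega(\pi\phi_{t_n}w_n,\pi\phi_{t_n}v)$ altogether.
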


\begin{proof}
 Consider a vector $v\in T^1\Omega$ whose projection in $T^1M$ is non-wandering. Let $x=\pi v$ be the footpoint of $v$. We want to show that $\phi_{\infty}v$ is an accumulation point of $\Gamma\cdot x$. Since the projection of $v$ in $T^1M$ is non-wandering, we can find sequences of vectors $(v_n)_n$ in $T^1\Omega$ converging to $v$, of positive times $(t_n)_n$ going to infinity, and of automorphisms $(\gamma_n)_n$ in $\Gamma$ such that $(d_{T^1\Omega}(\phi_{t_n}v_n,\gamma_nv))_n$ tends to zero. Since $(v_n)_n$ tends to $v$ and $(t_n)_n$ goes to infinity, $(\pi\phi_{t_n}v_n)_n$ must converge to $\phi_\infty v$. By Remark~\ref{Hilbert VS Euclidean}, the fact that $(d_\Omega(\pi\phi_{t_n}v_n,\gamma_nx))_n$ tends to zero implies that $(\gamma_nx)_n$ also converges to $\phi_\infty v$ in $\PR(V)$.
\end{proof}

The full orbital limit set $\Lambda_\Gamma^{orb}$ and Observation~\ref{NW<T1Mcore} will not be used in the remainder of the paper.

\subsection{The biproximal unit tangent bundle of reducible compact convex projective manifolds}\label{T1Mbip vide casred}

In this section we explain, for completeness, why the biproximal unit tangent bundle of a reducible compact convex projective manifold is empty. This is not needed anywhere in the paper.

Vey \cite[Th.\,3]{vey} (see \cite[\S\,5.1]{benoist_survey}) proved that, given any properly convex open set $\Omega\subset\PR(V)$ divided by a discrete group $\Gamma\subset\Aut(\Omega)$, the group $\Gamma$ is \emph{not} strongly irreducible if and only if $\Omega$ is \emph{decomposable}, \ie there exists a decomposition $V=V_1\oplus V_2$ and convex open cones $\Ccal_i\subset V_i$ with $\PR(\Ccal_i)\subset\PR(V_i)$ properly convex for $i=1,2$, such that $\Omega=\PR(\Ccal_1+\Ccal_2)$; in this case we say that $M=\Omega/\Gamma$ is a \emph{reducible} compact convex projective manifold.

\begin{lemma}
 Suppose that $\dim(V)=d+1>2$. Let $\Omega\subset\PR(V)$ be a decomposable properly convex open set, and $\Gamma\subset\Aut(\Omega)$ a discrete subgroup. Then the quotient $M=\Omega/\Gamma$ is higher-rank and $T^1M_{bip}$ is empty. In particular, reducible compact convex projective manifolds are higher-rank and have an empty biproximal unit tangent bundle.
\end{lemma}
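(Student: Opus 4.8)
The plan is to show that, for a decomposable $\Omega = \PR(\Ccal_1+\Ccal_2)$ with $V = V_1\oplus V_2$, any biproximal element $\gamma\in\Gamma$ has an axis disjoint from $\Omega$, and more strongly that the segment between $x_\gamma^+$ and $x_\gamma^-$ lies in $\partial\Omega$; then I will upgrade this to the statement that no straight geodesic with both endpoints in $\Lambda_\Gamma$ meets $\Omega$, i.e.\ $T^1M_{bip}=\emptyset$. The ``higher-rank'' assertion follows formally, since by Fact~\ref{period_is_translation_length} a rank-one periodic vector would be biproximal and hence lie in $T^1M_{bip}$, so rank-one implies $T^1M_{bip}\neq\emptyset$.

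The key observation is that $\PR(V_1)$ and $\PR(V_2)$ are disjoint projective subspaces, both contained in $\overline{\Omega}$ (indeed in $\partial\Omega$, since each $\PR(\Ccal_i)$ is a proper convex subset in the boundary of the join), and that every point of $\Omega$ lies on a segment joining a point of $\PR(\Ccal_1)$ to a point of $\PR(\Ccal_2)$. Hence the whole of $\PR(V_1)\cup\PR(V_2)$ is a union of segments contained in $\partial\Omega$. First I would argue that $\Gamma$ preserves the (unordered) pair $\{\PR(V_1),\PR(V_2)\}$: an automorphism of $\Omega$ permutes the join structure, because the relative interiors $\PR(\Ccal_i)$ are exactly the maximal ``extreme faces at infinity'' cut out by the decomposition — more concretely, $\PR(V_i)\cap\overline\Omega = \overline{\PR(\Ccal_i)}$ and these are precisely the two components one gets by taking, for a boundary point, the union of all boundary segments through it in the join direction. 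Passing if necessary to the index-$\le 2$ subgroup $\Gamma'$ fixing each $\PR(V_i)$, each $\gamma\in\Gamma'$ has block-diagonal form $\mathrm{diag}(g_1,g_2)$ with $g_i\in\GL(V_i)$. Then $\lambda_1(\gamma)$ and $\lambda_{d+1}(\gamma)$ are the largest and smallest among the moduli of eigenvalues of $g_1$ and of $g_2$; if $\gamma$ is biproximal, the top eigenvalue is simple, and I claim it cannot be contributed by the \emph{same} block as the bottom eigenvalue (if $g_1$ carried both $\lambda_1$ and $\lambda_{d+1}$, then all eigenvalues of $g_2$ would have modulus strictly between, forcing $\dim V_2$ eigenvalues of $\gamma$ strictly inside, which is fine, but then the attracting and repelling lines $x_\gamma^\pm$ both lie in $\PR(V_1)\subset\partial\Omega$, and the segment between them lies in $\PR(V_1)\cap\overline\Omega\subset\partial\Omega$). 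In the remaining case $x_\gamma^+\in\PR(V_1)$ and $x_\gamma^-\in\PR(V_2)$ (up to swapping), and then $\axis(\gamma)\subset\PR(V_1)\oplus\PR(V_2)$ joins a point of $\overline{\PR(\Ccal_1)}$ to a point of $\overline{\PR(\Ccal_2)}$; but by the join description such a segment meets $\Omega$ only if both endpoints lie in the open cones $\PR(\Ccal_i)$, whereas $x_\gamma^\pm\in\partial\Omega$, so in fact the segment is contained in $\partial\Omega$ as well. Either way, for biproximal $\gamma\in\Gamma'$ the closed segment $[x_\gamma^+,x_\gamma^-]$ lies in $\partial\Omega$, and conjugating by the (at most one) nontrivial coset swaps the two subspaces and yields the same conclusion for all biproximal $\gamma\in\Gamma$.

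To conclude that $T^1M_{bip}=\emptyset$, I would take any $v\in T^1\Omega$ with $\phi_{\pm\infty}v\in\Lambda_\Gamma$ and derive a contradiction. By Fact~\ref{densitybis}\ref{Item : proximalite} (applicable if $\Gamma$ is strongly irreducible — but here $\Gamma$ is decomposable, so instead I argue directly) ... more safely: $\Lambda_\Gamma$ is contained in $\PR(V_1)\cup\PR(V_2)$, because the attracting fixed point of any proximal $\gamma\in\Gamma$ lies in one $\PR(V_i)$ (by the block structure on $\Gamma'$, then take closure and the $\Gamma$-action which only permutes the two subspaces). Now if $\phi_\infty v$ and $\phi_{-\infty}v$ were in the same $\PR(V_i)$, the line through them would lie in $\PR(V_i)\subset\partial\Omega$, contradicting that it meets $\Omega$. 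If they were in different subspaces, say $\phi_\infty v\in\PR(V_1)$, $\phi_{-\infty}v\in\PR(V_2)$, then the chord $[\phi_\infty v,\phi_{-\infty}v]$ joins a boundary point of $\overline{\PR(\Ccal_1)}$ to one of $\overline{\PR(\Ccal_2)}$; since these chords sweep out $\overline\Omega$ with interior points corresponding exactly to pairs of points in the \emph{open} cones, and $\phi_{\pm\infty}v\in\partial\Omega$ forces at least one endpoint to lie in the relative boundary of its cone, the chord lies in $\partial\Omega$ — again contradicting that it meets $\Omega$. Hence no such $v$ exists and $T^1M_{bip}=\emptyset$. Finally, the displayed ``in particular'' for reducible compact convex projective manifolds is immediate from Vey's theorem (cited just above the lemma), which identifies reducibility of the divided manifold with decomposability of $\Omega$.

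The main obstacle I anticipate is the careful verification that $\Gamma$ preserves the pair $\{\PR(V_1),\PR(V_2)\}$ — equivalently, that the join decomposition is canonical — and the precise combinatorics of which chords of the join $\PR(\Ccal_1+\Ccal_2)$ actually meet $\Omega$ versus stay in $\partial\Omega$. Both are elementary convexity facts about projective joins, but they need to be stated cleanly; everything downstream (the block-diagonal eigenvalue bookkeeping and the two-case argument on endpoints) is then routine.
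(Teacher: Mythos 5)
Your argument has a genuine gap in its central invariance claim. You assert that $\Gamma$ (via $\Aut(\Omega)$) must preserve the unordered pair $\{\PR(V_1),\PR(V_2)\}$, because the factors $\PR(\Ccal_i)$ are ``canonically cut out'' by the join. This is false in general: the given decomposition of a decomposable $\Omega$ into \emph{two} cone factors is not unique when one of the factors is itself decomposable. The simplest counterexample is $\Omega$ a simplex in $\PR(\R^4)$, decomposed as $\PR(\R_{>0}e_1 + \R_{>0}^3)$: the coordinate transposition $(1\,2)\in\Aut(\Omega)$ sends $V_1=\R e_1$ to $\R e_2$, which is neither $V_1$ nor $V_2$, and a discrete subgroup $\Gamma$ containing such an element simply does not admit the index-$\leq 2$ subgroup $\Gamma'$ with block-diagonal form you build on. This collapses the eigenvalue bookkeeping for biproximal elements \emph{and} the proposed derivation of $\Lambda_\Gamma\subset\PR(V_1)\cup\PR(V_2)$. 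A second, subtler gap is in the final case analysis: you claim that ``$\phi_{\pm\infty}v\in\partial\Omega$ forces at least one endpoint to lie in the relative boundary of its cone''. That does not follow from membership in $\partial\Omega$ alone --- the open strata $\PR(\Ccal_i)$ are already inside $\partial\Omega$ when $\dim V_i<\dim V$, so a chord from $\PR(\Ccal_1)$ to $\PR(\Ccal_2)$ can have both endpoints in the boundary while still crossing $\Omega$. What actually rescues the case analysis is that $\Lambda_\Gamma$ lies in the \emph{closure of extremal points} of $\partial\Omega$ (since attracting fixed points of proximal automorphisms are extremal, as invoked in Remark~\ref{T1Mbip vide cassym}); extremal points of $\overline{\PR(\Ccal_i)}$ lie in $\PR(\partial\Ccal_i)$ once $\dim V_i\geq 2$, and the condition $\dim V>2$ guarantees at most one factor is a line.

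The paper sidesteps both problems by making no reference to the group. It identifies $\partial\Omega$ explicitly as $\PR(\partial\Ccal_1+\overline\Ccal_2)\cup\PR(\overline\Ccal_1+\partial\Ccal_2)\cup\PR(\overline\Ccal_1+0)\cup\PR(0+\overline\Ccal_2)$, proves directly that \emph{no} point of $\partial\Omega$ is strongly extremal (which gives higher-rank immediately, with no need to route through Fact~\ref{period_is_translation_length}), identifies the extremal set as $\PR(\widetilde E_1+0)\cup\PR(0+\widetilde E_2)$, and shows every segment between two extremal points stays in $\partial\Omega$. The only input about $\Gamma$ is the general fact that $\Lambda_\Gamma$ lies in the closure of the extremal set. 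Your overall case analysis on which stratum contains $\phi_{\pm\infty}v$ can be salvaged along these lines, but as written the proof rests on a false invariance and an unjustified boundary claim.
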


\begin{proof}
 Let us show that $\partial\Omega$ contains no strongly extremal point (this implies that $M$ is higher-rank), and that $[x,y]\subset\partial\Omega$ for all extremal points of $\partial\Omega$ (this implies that $T^1M_{bip}$ is empty since one can check that $\Lambda_{\Gamma}$ is contained in the closure of the set of extremal points). Consider a decomposition $\Omega=\PR(\Ccal_1+\Ccal_2)$, where $\Ccal_i\subset V_i$ is a convex cone for $i=1,2$, with $V=V_1\oplus V_2$.
 
 Observe that the boundary of $\Omega$ is equal to $\PR(\partial\Ccal_1+\overline\Ccal_2)\cup\PR(\overline\Ccal_1+ \partial\Ccal_2)\cup \PR(\overline\Ccal_1+ 0)\cup\PR(0+\overline\Ccal_2)$. Take $x\in\partial\Omega$, and let us check that $x$ is not strongly extremal. If $x=[tv+(1-t)w]$ for $v\in\partial\Ccal_i$ and $w\in\overline\Ccal_j$ and $0<t\leq 1$, with $i\neq j\in\{1,2\}$, then $\{[sv+(1-s)w:0\leq s\leq 1\}$ is a non-trivial segment of $\partial\Omega$ that contains $x$. If $x\in\PR(\overline\Ccal_i)$, then we take $y\in\PR(\partial\Ccal_i)\smallsetminus\{x\}$, or in $\PR(\partial\Ccal_j)$, with $i\neq j\in\{1,2\}$ ($y$ exists because $\dim(V)>2$), and $[x,y]\subset\partial\Omega$ is a non-trivial segment.

 Observe that the set of extremal points of $\Omega$ is equal to $\PR(\widetilde E_1+ 0)\cup\PR(0+ \widetilde E_2)$, where $\widetilde E_i$ is the preimage in $V$ of the set of extremal points of $\PR(\Ccal_i)$ for $i=1,2$. If $x,y\in\PR(\widetilde E_i)$ with $i=1,2$, then $[x,y]\subset\PR(\overline\Ccal_i)\subset\partial\Omega$. If $x\in\PR(\widetilde E_1)$ and $y\in\PR(\widetilde E_2)$, then either $x\in\PR(\partial\Ccal_1)$ or $y\in\PR(\partial\Ccal_2)$ (since $\dim(V)>2$), and $[x,y]\subset\PR(\partial\Ccal_1+\overline\Ccal_2)\cup\PR(\overline\Ccal_1+\partial\Ccal_2)\subset\partial\Omega$.
\end{proof}

\section{Endpoints of biproximal periodic geodesics are smooth}\label{Endpoints of biproximal periodic geodesics are smooth}

This section contains an elementary result (Lemma~\ref{biproxiattractingpointsaresmooth}) which will be used in the proof of topological mixing in Section~\ref{Proof of Mixing}. Furthermore, various characterisations of the rank-one property, for instance in terms of duality, are given in Lemma~\ref{equivalences rang un}. Finally, we use Lemma~\ref{biproxiattractingpointsaresmooth} to justify a claim of the introduction: namely, that rank-one periodic geodesics are dense in the biproximal unit tangent bundle of rank-one manifolds (Proposition~\ref{rang un dense dans T1Mbip}).

\subsection{On the regularity of endpoints of biproximal periodic geodesics}

The main consequence of the following lemma is that we will be able to apply Proposition~\ref{strongstablemanifolds}.\ref{Item : variete stable} to biproximal periodic vectors.

\begin{lemma}\label{biproxiattractingpointsaresmooth}
Let $\Omega\subset \PR(V)$ be a properly convex open set and $g\in\Aut(\Omega)$ a biproximal element. Then $\axis(g)=x_g^-\oplus x_g^+$ intersects $\Omega$ if and only if $x_g^+$ is smooth; in this case $T_{x_g^+}\partial\Omega=x_g^+\oplus x_g^0$.
\end{lemma}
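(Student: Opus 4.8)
The statement has two directions plus an identification of the tangent hyperplane. I will work with a lift $\tilde g\in\GL(V)$ and use the decomposition $V = x_g^+\oplus x_g^-\oplus x_g^0$ (abusing notation to write lines for the corresponding eigenlines/invariant complement), where the eigenvalue on $x_g^+$ has strictly maximal modulus, the one on $x_g^-$ strictly minimal modulus, and all eigenvalues on $x_g^0$ of intermediate modulus. The key dynamical fact is that for a point $p$ in general position, $\tilde g^n p$ converges projectively to $x_g^+$ and $\tilde g^{-n}p$ to $x_g^-$; more precisely, writing $p = p^+ + p^0 + p^-$ in the above decomposition, $g^n p \to x_g^+$ provided $p^+\neq 0$, and the hyperplane $x_g^+\oplus x_g^0$ is exactly the set of $[p]$ with $p^+ = 0$, i.e. the attracting hyperplane of $g$ acting on $\PR(V)$.

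\textbf{Easy direction: axis meets $\Omega$ $\Rightarrow$ $x_g^+$ smooth.} Suppose $z\in\axis(g)\cap\Omega$. Since $z$ lies on the projective line $x_g^+\oplus x_g^-$ and is distinct from $x_g^-$ (as $z\in\Omega$ is open while $x_g^-\in\partial\Omega$ by Fact~\ref{period_is_translation_length} applied to the periodic straight geodesic, or simply because $x_g^\pm\in\partial\Omega$ whenever the axis meets $\Omega$), we have $\phi_{+\infty}$ of the straight geodesic through $z$ in the direction of $x_g^+$ equal to $x_g^+$, so $x_g^+\in\partial\Omega$. Now let $H$ be any supporting hyperplane of $\Omega$ at $x_g^+$. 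The key point is that $H$ must be $g$-invariant: indeed $g$ preserves $\Omega$ hence permutes its supporting hyperplanes at the fixed point $x_g^+$, and since $g$ is proximal it has a unique invariant hyperplane through $x_g^+$ in the "attracting" position; more robustly, $g^n H$ is again a supporting hyperplane at $x_g^+$ for all $n$, and the set of supporting hyperplanes at a boundary point is compact, so by proximality of $g^{-1}$ (equivalently, by looking at the induced proximal action of $\tilde g$ on $V^*$) the sequence $g^n H$ converges to the unique $g$-invariant hyperplane not passing through... — the cleanest formulation: the dual $\tilde g^{\,T}$ acting on $V^*$ is proximal with attracting fixed point the line $(x_g^+\oplus x_g^0)^\perp$, repelling hyperplane corresponds to $x_g^+$, so any $[\varphi]\in\PR(V^*)$ with $\varphi$ not vanishing on $x_g^-$... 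Let me instead argue: a supporting hyperplane $H$ at $x_g^+$ cannot contain $x_g^-$ (else the whole segment $[x_g^-,x_g^+]\subset\axis(g)$, which meets $\Omega$, would lie in $H$, contradicting $H\cap\Omega=\emptyset$). So in $\PR(V^*)$, the point dual to $H$ avoids the repelling hyperplane of $\tilde g^{\,T}$; applying $g^n$ and using continuity of the supporting-hyperplane condition under the compact limit, $g^nH\to x_g^+\oplus x_g^0$, which is therefore also a supporting hyperplane. But distinct supporting hyperplanes at $x_g^+$ would give a whole interval of supporting hyperplanes, all containing $x_g^+$ but with the limit $x_g^+\oplus x_g^0$ forced, so there is only one, namely $x_g^+\oplus x_g^0$; hence $x_g^+$ is smooth and $T_{x_g^+}\partial\Omega = x_g^+\oplus x_g^0$.

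\textbf{Converse: $x_g^+$ smooth $\Rightarrow$ axis meets $\Omega$.} Suppose $x_g^+$ is smooth; I must show $\axis(g)\cap\Omega\neq\emptyset$. First, $x_g^+\in\partial\Omega$: since $g$ preserves $\Omega$ and is proximal, for any $z\in\Omega$ in general position we have $g^nz\to x_g^+$, forcing $x_g^+\in\overline\Omega$; and $x_g^+\notin\Omega$ because a fixed point of $g$ in the open set $\Omega$ would have to be in $\Omega$'s interior with $g$ elliptic-like on a neighbourhood, contradicting proximality (alternatively $x_g^+\in\partial\Omega$ by Fact~\ref{period_is_translation_length} once we know the axis meets $\Omega$ — but here I cannot yet assume that, so I argue directly: were $x_g^+\in\Omega$, then $g$ would fix a point of $\Omega$ and preserve the tangent cone there, but a proximal element has no fixed point whose orbit stays bounded away from $\partial$... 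I'll phrase this cleanly in the writeup). Likewise $x_g^- = x_{g^{-1}}^+\in\partial\Omega$. The supporting hyperplane $T_{x_g^+}\partial\Omega$ is $g$-invariant (it is the unique one at the fixed point $x_g^+$), and the only $g$-invariant hyperplane containing $x_g^+$ other than axis-containing ones is $x_g^+\oplus x_g^0$; since $x_g^-\notin T_{x_g^+}\partial\Omega$ would follow if $T_{x_g^+}\partial\Omega = x_g^+\oplus x_g^0$, I need: $T_{x_g^+}\partial\Omega = x_g^+\oplus x_g^0$. This holds because $\tilde g$ on $V/x_g^+$ still has a simple top eigenvalue... — more simply, any $g$-invariant hyperplane through $x_g^+$ corresponds to a $\tilde g^{\,T}$-invariant line in the annihilator of $x_g^+$; the invariant hyperplanes through $x_g^+$ are exactly $x_g^+\oplus x_g^0$ and hyperplanes of the form $x_g^+\oplus x_g^- \oplus (\text{hyperplane of } x_g^0)$. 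Among these only $x_g^+\oplus x_g^0$ fails to contain $x_g^-$. Now if $T_{x_g^+}\partial\Omega$ contained $x_g^-$, then it would contain the segment $[x_g^-,x_g^+]=\axis(g)$ and all its $g$-translates; take $z\in\Omega$, then $g^nz\to x_g^+$ and the whole geodesic ray — actually here's the clean contradiction: $T_{x_g^+}\partial\Omega$ is a supporting hyperplane, so $T_{x_g^+}\partial\Omega\cap\Omega=\emptyset$; if it contained $x_g^-$, pick $z\in\Omega$ and the straight geodesic $c$ through $z$; then $\lim_{t\to+\infty} c(t) = \phi_{+\infty}$ of that geodesic — but I want the geodesic on the axis. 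Since $x_g^\pm\in\partial\Omega$ and they are distinct, the open segment $(x_g^-,x_g^+)$ is either entirely in $\Omega$ or entirely in $\partial\Omega$ (by convexity, the line $x_g^-\oplus x_g^+$ meets $\overline\Omega$ in a segment with endpoints on $\partial\Omega$; the endpoints are $x_g^\pm$ themselves or interior to that intersection — in fact $\overline\Omega\cap\axis(g)$ is a closed subsegment of $\axis(g)$ containing both $x_g^+$ and $x_g^-$ on its boundary, hence equals the closed segment with those as a diameter, so the open segment lies in $\Omega$). That gives $\axis(g)\cap\Omega\neq\emptyset$ directly, completing the converse, and also re-confirms $T_{x_g^+}\partial\Omega\not\ni x_g^-$ so $T_{x_g^+}\partial\Omega=x_g^+\oplus x_g^0$ by the eigenspace analysis above.

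\textbf{Main obstacle.} The delicate point is the converse: showing that smoothness at $x_g^+$ forces the open axis segment into $\Omega$ rather than into $\partial\Omega$. The crux is ruling out that $[x_g^-,x_g^+]\subset\partial\Omega$; this is where smoothness is used — if that segment were in $\partial\Omega$, it would be a non-trivial boundary segment through $x_g^+$, and its $g$-translates would accumulate on boundary segments through $x_g^+$, but the uniqueness of the supporting hyperplane at $x_g^+$ constrains all these segments to lie in $x_g^+\oplus x_g^0$, forcing $x_g^-\in x_g^+\oplus x_g^0$, contradicting the direct-sum decomposition $V = x_g^+\oplus x_g^-\oplus x_g^0$. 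I expect the writeup to spend most of its effort making this accumulation/uniqueness argument precise, likely via the dual proximal action of $\tilde g^{\,T}$ on $\PR(V^*)$ and Remark~\ref{prox_are_open}.
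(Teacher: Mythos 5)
Your overall strategy (exploit the proximal dynamics of $g$ on points and on hyperplanes) is the same as the paper's, but both directions have gaps as written, and the second one contains a false claim.

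\textbf{Forward direction.} You correctly show that every supporting hyperplane $H$ at $x_g^+$ misses $x_g^-$ and that $g^nH\to x_g^+\oplus x_g^0$, so $x_g^+\oplus x_g^0$ is a supporting hyperplane. But the concluding step --- ``distinct supporting hyperplanes would give a whole interval \dots with the limit forced, so there is only one'' --- is a non sequitur: pointwise convergence of each $g^nH$ to $x_g^+\oplus x_g^0$ is perfectly compatible with there being a whole interval of supporting hyperplanes. To close this you must use the \emph{backward} dynamics, which is what the paper does: if $H\neq x_g^+\oplus x_g^0$ is supporting at $x_g^+$, pick $x\in H\smallsetminus(x_g^+\oplus x_g^0)$; then $g^{-n}x\to x_g^-$, the lines $x_g^+\oplus g^{-n}x$ lie in the supporting hyperplanes $g^{-n}H$, hence in the closed set $\PR(V)\smallsetminus\Omega$, and their limit is $\axis(g)$, contradicting $\axis(g)\cap\Omega\neq\emptyset$. (Alternatively: the set of supporting hyperplanes at $x_g^+$ is a compact $g$-invariant subset of $\PR(V^*)$ avoiding the repelling hyperplane of the dual proximal action, and $g^n$ converges uniformly to the constant $x_g^+\oplus x_g^0$ on such a set, so the set is a singleton. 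You have all the ingredients for this but never invoke the invariance of the set in the final step.)

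\textbf{Converse direction.} The argument you actually run is wrong: the claim that ``$\overline\Omega\cap\axis(g)$ is a closed subsegment containing $x_g^+$ and $x_g^-$ on its boundary, hence the open segment lies in $\Omega$'' is false. A boundary point of $\Omega$ lying on a line need not be an endpoint of that line's intersection with $\overline\Omega$: the whole segment $[x_g^-,x_g^+]$ may lie in $\partial\Omega$ (this happens for the diagonal automorphisms of a triangle, and for the symmetric convex sets of Section~\ref{symmetric_convex}). As written, your argument would prove that the axis always meets $\Omega$, without using smoothness at all, which would make the lemma trivially an ``always true'' statement --- and it is not. Your ``main obstacle'' sketch is closer to the truth but also does not close: from $[x_g^-,x_g^+]\subset\partial\Omega$ and smoothness you can only deduce that $T_{x_g^+}\partial\Omega$ contains $x_g^-$, hence contains $\axis(g)$; this is not yet a contradiction with the direct sum decomposition. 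The missing input, which the paper uses, is that $x_g^+\oplus x_g^0$ is \emph{always} a supporting hyperplane at $x_g^+$ (it is the attracting fixed point of the dual action on $\Omega^*$); the paper's converse is then immediate: a supporting hyperplane containing $\axis(g)$ is distinct from the supporting hyperplane $x_g^+\oplus x_g^0$, so $x_g^+$ is not smooth.
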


\begin{proof}
Assume that $x_g^+$ is not smooth. Then there is a supporting hyperplane $H$ of $\Omega$ at $x_g^+$ which is different from $x_g^+\oplus x_g^0$. Let $x\in H\smallsetminus (x_g^+\oplus x_g^0)$, so that the sequence $(g^{-n}x)_n$ tends to $x_g^-$. The sequence of projective lines through $g^{-n}x$ and $x_g^+$ must converge to $\axis(g)$, and they are all contained in $\PR(V)\smallsetminus\Omega$ which is closed. Therefore $\PR(V)\smallsetminus\Omega$ must contain $\axis(g)$ as well. 

Conversely if $\PR(V)\smallsetminus\Omega$ contains $\axis(g)$, then $x_g^+$ has a supporting hyperplane which contains $\axis(g)$, and which is therefore different from the supporting hyperplane $x_g^0\oplus x_g^+$.
\end{proof}

\subsection{Rank-one periodic geodesics and their dual}

In this section we give several equivalent conditions for an automorphism of a properly convex open set to be rank-one (Definition~\ref{def rank-one}), which follow from Lemma~\ref{biproxiattractingpointsaresmooth}. This will be used in Section~\ref{Density of rank-one periodic geodesics}, and may be interesting in its own right.

Let us recall the notion of duality for properly convex open sets. We identify the dual projective space $\PR(V^*)$ with the set of projective hyperplanes of $\PR(V)$. Let $\Omega$ be a properly convex open subset of $\PR(V)$. The dual of $\Omega$, denoted by $\Omega^*$, is the properly convex open subset of $\PR(V^*)$ defined as the set of projective hyperplanes which do not intersect $\overline{\Omega}$. We naturally identify $\PGL(V)$ and $\PGL(V^*)$, then $\Aut(\Omega)$ identifies with $\Aut(\Omega^*)$, and the attracting (\resp repelling) fixed point of the action on $\PR(V^*)$ of any biproximal element $g\in\PGL(V)$ is $x_g^+\oplus x_g^0$ (\resp $x_g^-\oplus x_g^0$).

\begin{lemma}\label{equivalences rang un}
Let $\Omega\subset \PR(V)$ be a properly convex open set and $g\in\Aut(\Omega)$ a biproximal element. Then the following are equivalent: 
\begin{enumerate}[label=(\alph*)]
\item \label{Item : a} $g$ is rank-one;
\item \label{Item : b} $x_g^+,x_g^-\in\partial\Omega$ are smooth and strongly extremal points;
\item \label{Item : c} $x_g^+$ is strongly extremal;
\item \label{Item : d} $g$ seen as an automorphism of $\Omega^*$ is rank-one;
\item \label{Item : e} the axis of $g$ in $\PR(V)$ intersects $\Omega$, and the axis of $g$ in $\PR(V^*)$ intersects $\Omega^*$.
\end{enumerate}
\end{lemma}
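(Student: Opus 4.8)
The strategy is to establish a cycle of implications, using Lemma~\ref{biproxiattractingpointsaresmooth} and duality throughout. First I would unpack Definition~\ref{def rank-one}: since $g$ is biproximal, its axis in $\PR(V)$ intersecting $\Omega$ means $g$ has a genuine straight periodic geodesic (Fact~\ref{period_is_translation_length} / Definition~\ref{biprox_orbit}), whose endpoints are exactly $x_g^+$ and $x_g^-$. So $g$ rank-one means precisely that $x_g^\pm$ are smooth and strongly extremal \emph{and} the axis meets $\Omega$; this already makes \ref{Item : a}$\Leftrightarrow$\ref{Item : b} almost immediate, provided one checks that smoothness of $x_g^\pm$ forces the axis to meet $\Omega$ — which is exactly the content of Lemma~\ref{biproxiattractingpointsaresmooth} (applied to $g$ for $x_g^+$ and to $g^{-1}$ for $x_g^-$). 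Conversely if the axis meets $\Omega$, Lemma~\ref{biproxiattractingpointsaresmooth} gives smoothness of $x_g^+$ automatically, and $x_g^-$ similarly, so \ref{Item : b} is equivalent to: ``$x_g^\pm$ strongly extremal and axis meets $\Omega$'', which is \ref{Item : a}.

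Next, \ref{Item : b}$\Rightarrow$\ref{Item : c} is trivial, and the real content is \ref{Item : c}$\Rightarrow$\ref{Item : b}. Here I would argue: if $x_g^+$ is strongly extremal then in particular it is extremal, so by Fact~\ref{period_is_translation_length} (the last sentence — extremal endpoints force biproximality, but more relevantly the converse reasoning) one needs the axis to meet $\Omega$. Actually the cleaner route: strong extremality of $x_g^+$ implies $x_g^+$ is not on any boundary segment; I claim this forces $\axis(g)\cap\Omega\neq\emptyset$, because otherwise $\axis(g)\subset\partial\Omega$ (it is a projective line either meeting $\Omega$ or contained in $\overline\Omega\setminus\Omega=\partial\Omega$, using proper convexity so that a line can't be disjoint from $\overline\Omega$ while its two ``endpoints'' $x_g^\pm\in\partial\Omega$), and then the segment of $\axis(g)$ near $x_g^+$ would be a non-trivial boundary segment through $x_g^+$, contradicting strong extremality. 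Once $\axis(g)\cap\Omega\neq\emptyset$, Lemma~\ref{biproxiattractingpointsaresmooth} gives $x_g^+$ smooth; applying the dual statement (or symmetry $g\leftrightarrow g^{-1}$) we must still get $x_g^-$ smooth and strongly extremal — for this last point I would use the duality paragraph: $x_g^+$ strongly extremal in $\partial\Omega$ translates, under the identification of $\Aut(\Omega)$ with $\Aut(\Omega^*)$, into a smoothness/extremality statement for the dual fixed point $x_g^+\oplus x_g^0$ of $g$ acting on $\Omega^*$, which by the same line of argument propagates. This suggests folding \ref{Item : d} and \ref{Item : e} into the cycle now.

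For \ref{Item : e}: the axis of $g$ in $\PR(V)$ is $x_g^-\oplus x_g^+$ and in $\PR(V^*)$ it is $(x_g^-\oplus x_g^0)\oplus(x_g^+\oplus x_g^0)$, i.e.\ the pencil of hyperplanes containing $x_g^0$; by Lemma~\ref{biproxiattractingpointsaresmooth} applied in $\PR(V)$, ``axis meets $\Omega$'' $\Leftrightarrow$ ``$x_g^+$ smooth'', and applied in $\PR(V^*)$, ``dual axis meets $\Omega^*$'' $\Leftrightarrow$ ``$x_g^+\oplus x_g^0$ smooth for $\Omega^*$''. Now the classical duality dictionary says $\xi\in\partial\Omega$ is smooth iff its supporting hyperplane is an extremal point of $\Omega^*$, and $\xi$ is extremal iff it is smooth as a point of $\Omega^*$; more precisely $x_g^+$ strongly extremal $\Leftrightarrow$ $x_g^+\oplus x_g^0=T_{x_g^+}\partial\Omega$ is smooth for $\Omega^*$ (using $T_{x_g^+}\partial\Omega = x_g^+\oplus x_g^0$ from Lemma~\ref{biproxiattractingpointsaresmooth}). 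Threading these equivalences: \ref{Item : e} $\Leftrightarrow$ [$x_g^+$ smooth and strongly extremal] $\Leftrightarrow$ \ref{Item : c}, and \ref{Item : d} is just \ref{Item : a} read in $\Omega^*$, which by the $\Omega\leftrightarrow\Omega^*$ symmetry of condition \ref{Item : e} is equivalent to \ref{Item : a}. So the final shape of the proof is: \ref{Item : a}$\Leftrightarrow$\ref{Item : b} via Lemma~\ref{biproxiattractingpointsaresmooth} directly; \ref{Item : b}$\Rightarrow$\ref{Item : c} trivially; \ref{Item : c}$\Leftrightarrow$\ref{Item : e} via the duality dictionary plus Lemma~\ref{biproxiattractingpointsaresmooth} on both sides; \ref{Item : e}$\Rightarrow$\ref{Item : b} by re-reading \ref{Item : e} as ``$x_g^\pm$ smooth (from $\PR(V)$ side) and strongly extremal (from $\PR(V^*)$ side)''; and \ref{Item : a}$\Leftrightarrow$\ref{Item : d} by symmetry of \ref{Item : e} under $\Omega\leftrightarrow\Omega^*$.

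The main obstacle I anticipate is the precise duality bookkeeping in the equivalence between ``$x_g^+$ strongly extremal in $\partial\Omega$'' and a smoothness statement for $\Omega^*$: one must be careful that \emph{strong} extremality (no non-trivial boundary segment through the point, not merely not in the interior of one) is the notion that dualizes cleanly to smoothness, and this requires knowing $T_{x_g^+}\partial\Omega=x_g^+\oplus x_g^0$, which is why Lemma~\ref{biproxiattractingpointsaresmooth} must be invoked before the duality step rather than after. The rest is essentially formal once one sets up the dictionary between supporting hyperplanes of $\Omega$ at $\xi$ and points of $\overline{\Omega^*}$.
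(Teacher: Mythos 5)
Your proof is correct and mirrors the paper's own argument: you organise the cycle around the same implications, with Lemma~\ref{biproxiattractingpointsaresmooth} and the duality dictionary of Fact~\ref{dualité} carrying the weight, and you correctly anticipate the order-of-operations issue (smoothness must be established before invoking the duality between strong extremality and smoothness of the dual point). Two small remarks. First, in arguing that strong extremality of $x_g^+$ forces $\axis(g)\cap\Omega\neq\emptyset$, your parenthetical is off: a projective line is never contained in $\overline{\Omega}$, which is bounded in an affine chart; what you mean (and what the paper says) is that if the axis misses $\Omega$ then the \emph{segment} $[x_g^+,x_g^-]$ lies in $\partial\Omega$, which is the non-trivial boundary segment through $x_g^+$ that contradicts strong extremality. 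Second, for the ``dual axis meets $\Omega^*$'' half of \ref{Item : c}$\Rightarrow$\ref{Item : e}, you route through Fact~\ref{dualité}.\ref{Item : dualite 1} together with Lemma~\ref{biproxiattractingpointsaresmooth} applied to $\Omega^*$, which obliges you to first establish that $x_g^+$ is smooth; the paper instead uses Fact~\ref{dualité}.\ref{Item : dualite 2} directly, reading ``$x_g^0\cap\partial\Omega=\emptyset$'' (immediate from strong extremality of $x_g^+$) as exactly the statement that the dual axis meets $\Omega^*$, with no smoothness hypothesis needed. Both routes are valid; the paper's is a touch more economical.
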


We will need in the proof an elementary fact concerning duality of properly convex open sets, whose proof is left to the reader. Recall that the canonical isomorphism between $V$ and $V^{**}$ identifies $\Omega$ with $\Omega^{**}$. By definition of $\Omega^*$, the boundary $\partial\Omega^*$ is the set of supporting hyperplanes of $\Omega$; by duality $\partial\Omega=\partial\Omega^{**}$ is the set of supporting hyperplanes of $\Omega^*$.

\begin{fait}\label{dualité}
Let $\Omega\subset\PR(V)$ be a properly convex open set.
\begin{enumerate}[label=(\roman*)]
\item \label{Item : dualite 1} A smooth point $x\in\partial\Omega$ is strongly extremal if and only if the tangent space $T_x\partial\Omega$ is a smooth point of $\partial\Omega^*$; in this case $T_x\partial\Omega$ is strongly extremal.
\item \label{Item : dualite 2} For any $H,H'\in\partial\Omega^*$, the segment $[H,H']\subset \overline{\Omega}^*$ is contained in $\partial\Omega^*$ if and only if $H\cap H'\cap\partial\Omega$ is non-empty.
\end{enumerate}
\end{fait}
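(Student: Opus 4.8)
The plan is to reduce everything to the biduality dictionary recalled just before the statement — that $\partial\Omega^{**}=\partial\Omega$ is the set of supporting hyperplanes of $\Omega^*$, via $y\mapsto\mathcal{H}_y:=\{H\in\PR(V^*):y\in H\}$ — together with a little elementary convex geometry. For \ref{Item : dualite 2} I would fix $\phi,\psi\in V^*$ with $H=\PR(\Ker\phi)$, $H'=\PR(\Ker\psi)$, normalised so that $\phi,\psi\geq 0$ on a component $\hat\Omega\subset V$ of the cone over $\overline\Omega$ (possible as $H,H'$ are supporting hyperplanes); this forces $\phi,\psi>0$ on the cone over $\Omega$, since $H\cap\Omega=H'\cap\Omega=\emptyset$. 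The key point is that, $\overline{\Omega^*}$ being properly convex and $\phi,\psi$ lying in one component of its cone, the segment $[H,H']$ is the image of $\{s\phi+t\psi:\ s,t\geq 0,\ (s,t)\neq(0,0)\}$; each hyperplane $\Ker(s\phi+t\psi)$ in this family misses $\Omega$ (as $s\phi+t\psi>0$ on the cone over $\Omega$), so it lies in $\partial\Omega^*$ exactly when $s\phi+t\psi$ vanishes somewhere on $\hat\Omega\smallsetminus\{0\}$. The equivalence then follows: if $p\in H\cap H'\cap\partial\Omega$ and $v\in\hat\Omega\smallsetminus\{0\}$ lies over $p$, then $\phi(v)=\psi(v)=0$, so every such hyperplane contains $p\in\overline\Omega$ and $[H,H']\subset\partial\Omega^*$; conversely, if $[H,H']\subset\partial\Omega^*$, taking $s=t=1$ gives $v\in\hat\Omega\smallsetminus\{0\}$ with $\phi(v)+\psi(v)=0$, and as $\phi(v),\psi(v)\geq 0$ both vanish, so $[v]\in H\cap H'$ and $[v]\in\overline\Omega\smallsetminus\Omega=\partial\Omega$.

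For \ref{Item : dualite 1}, the starting point is that the supporting hyperplanes of $\Omega^*$ at a point $H\in\partial\Omega^*$ are precisely the $\mathcal{H}_y$ with $y\in\partial\Omega\cap H$ (by the dictionary, since $\mathcal{H}_y\ni H\iff y\in H$); hence for a smooth point $x\in\partial\Omega$ the hyperplane $T_x\partial\Omega$ is a smooth point of $\partial\Omega^*$ if and only if $\partial\Omega\cap T_x\partial\Omega=\{x\}$. It then remains to show, for smooth $x$, that this holds iff $x$ is strongly extremal. One direction is immediate: any $y\in(\partial\Omega\cap T_x\partial\Omega)\smallsetminus\{x\}$ makes $[x,y]$, which lies in $\overline\Omega$ and in the projective hyperplane $T_x\partial\Omega$ (disjoint from $\Omega$), a non-trivial segment of $\partial\Omega$. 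For the converse, given a non-trivial segment $[x,w]\subset\partial\Omega$, I would pick $m$ in its relative interior and a supporting hyperplane $H_m$ of $\Omega$ at $m$: since $\overline\Omega$ lies on one side of $H_m$ and $m$ is interior to $[x,w]$, the whole segment lies in $H_m$, so $H_m$ is a supporting hyperplane at $x$, hence $H_m=T_x\partial\Omega$ by smoothness, and $w\in(\partial\Omega\cap T_x\partial\Omega)\smallsetminus\{x\}$. Finally, if $x$ is moreover strongly extremal, then $T_x\partial\Omega$ is a smooth point of $\partial\Omega^*$ with unique supporting hyperplane $\mathcal{H}_x$, so applying the equivalence just established to $\Omega^*$ at the smooth point $T_x\partial\Omega$ shows it is strongly extremal iff $\partial\Omega^*\cap\mathcal{H}_x=\{T_x\partial\Omega\}$, i.e. iff the only supporting hyperplane of $\Omega$ through $x$ is $T_x\partial\Omega$ — which is precisely smoothness of $x$.

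I expect the main obstacle to be the explicit description of the segment $[H,H']$ in \ref{Item : dualite 2} (keeping the signs of $\phi,\psi$ consistent and using proper convexity of $\Omega^*$ correctly), together with the small convex-geometry fact used in \ref{Item : dualite 1} that a non-trivial segment of $\partial\Omega$ through a smooth point must lie in its tangent hyperplane; everything else is a routine unwinding of biduality.
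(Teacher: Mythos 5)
The paper explicitly leaves the proof of this fact to the reader, so there is no argument of the author's to compare against; your write-up correctly supplies the omitted proof, and it is the natural one. Part \ref{Item : dualite 2} is sound: normalising lifts $\phi,\psi\geq 0$ on the closed cone over $\overline\Omega$, identifying $[H,H']$ with the projectivised positive span (which is the correct one of the two projective segments, by proper convexity of $\Omega^*$), and testing membership in $\partial\Omega^*$ by whether $s\phi+t\psi$ vanishes on the cone is exactly what is needed, and the $s=t=1$ trick in the converse direction is clean. Part \ref{Item : dualite 1} is also correct: the identification of the supporting hyperplanes of $\Omega^*$ at $H$ with $\{\mathcal{H}_y : y\in\partial\Omega\cap H\}$, the equivalence (for smooth $x$) between strong extremality and $\partial\Omega\cap T_x\partial\Omega=\{x\}$ via the standard observation that a segment of $\partial\Omega$ through a smooth point lies in the tangent hyperplane, and the final bootstrap applying this equivalence to $\Omega^*$ all check out.
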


\begin{proof}[Proof of Lemma~\ref{equivalences rang un}]
The fact that \ref{Item : a}, \ref{Item : b} and \ref{Item : c} are equivalent is a particular case of \cite[Prop.\,6.3]{islam_rank_one}; for clarity we give a complete proof of Lemma~\ref{equivalences rang un}.
\begin{itemize}
\item \ref{Item : b} implies \ref{Item : a} by definition, and the converse holds by Fact~\ref{period_is_translation_length}.
\item \ref{Item : b} and \ref{Item : d} are equivalent by Fact~\ref{dualité}.\ref{Item : dualite 1} and the fact that \ref{Item : a} and \ref{Item : b} are equivalent.
\item That \ref{Item : b} implies \ref{Item : c} is immediate.
\item Let us prove that \ref{Item : c} implies \ref{Item : e}. Assume that $x_g^+$ is strongly extremal. Then $[x_g^+,x_g^-]$ is not contained in $\partial\Omega$, so the axis of $g$ in $\PR(V)$ intersects $\Omega$. Furthermore, $x_g^0\cap\partial\Omega$ is contained in $(x_g^0\oplus x_g^+)\cap\partial\Omega\smallsetminus\{x_g^+\}$ which is empty. By Fact~\ref{dualité}.\ref{Item : dualite 2} this exactly means that the axis of $g$ in $\PR(V^*)$ intersects $\Omega^*$.
\item Let us prove that \ref{Item : e} implies \ref{Item : b}. Assume that the axis of $g$ in $\PR(V)$ intersects $\Omega$, and that the axis of $g$ in $\PR(V^*)$ intersects $\Omega^*$. By Lemma~\ref{biproxiattractingpointsaresmooth}, the points $x_g^+,x_g^-\in\partial\Omega$ and $(x_g^+\oplus x_g^0)$, $(x_g^-\oplus x_g^0)\in\partial\Omega^*$ are smooth. By Fact~\ref{dualité}.\ref{Item : dualite 1}, the points $x_g^+$ and $x_g^-$ are strongly extremal.\qedhere
\end{itemize}
\end{proof}

\subsection{Density of rank-one periodic geodesics}\label{Density of rank-one periodic geodesics}

The following proposition justifies a claim of the introduction. It will not be used in the remainder of this paper.

\begin{prop}\label{rang un dense dans T1Mbip}
 Let $\Omega\subset \PR(V)$ be a properly convex open set. Then 
\begin{enumerate}
\item \label{Item : rg1 dense} for any pair $(\xi,\eta)\in\Geod(\Omega)$ such that $\xi$ is strongly extremal, there exist neighbourhoods $U$ of $\xi$ and $V$ of $\eta$ in $\PR(V)$ such that for any biproximal automorphism $g\in\Aut(\Omega)$, if $(x_g^+,x_g^-)\in U\times V$, then $g$ is rank-one;
 \item if $\Gamma\subset\Aut(\Omega)$ is a strongly irreducible discrete subgroup such that $M=\Omega/\Gamma$ is rank-one, then rank-one periodic geodesics are dense in $T^1M_{bip}$. In particular $T^1M_{bip}$ is not empty.
\end{enumerate}
\end{prop}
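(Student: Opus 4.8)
The plan is to prove part~(1) directly by a limiting/openness argument, and then deduce part~(2) by combining part~(1) with the density statement of Fact~\ref{densitybis} (or equivalently Corollary~\ref{bipsaredense}). For part~(1), fix $(\xi,\eta)\in\Geod(\Omega)$ with $\xi$ strongly extremal. By Lemma~\ref{equivalences rang un}, a biproximal $g\in\Aut(\Omega)$ is rank-one as soon as $x_g^+$ is strongly extremal; so it suffices to exhibit neighbourhoods $U\ni\xi$, $V\ni\eta$ such that whenever $g$ is biproximal with $x_g^+\in U$, $x_g^-\in V$, the point $x_g^+$ is strongly extremal. I would argue by contradiction: suppose no such $U,V$ exist. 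Then there is a sequence $(g_n)$ of biproximal automorphisms of $\Omega$ with $x_{g_n}^+\to\xi$, $x_{g_n}^-\to\eta$, and $x_{g_n}^+$ not strongly extremal, i.e.\ $x_{g_n}^+$ lies on a nontrivial segment $[a_n,b_n]\subset\partial\Omega$ with $x_{g_n}^+\in(a_n,b_n)$ or at an endpoint. After passing to a subsequence, $a_n\to a$, $b_n\to b$ in $\overline\Omega$, so $[a,b]\subset\partial\Omega$ (the complement of $\Omega$ is closed) and $\xi\in[a,b]$. Since $\xi$ is strongly extremal we must have $a=b=\xi$, so the segments $[a_n,b_n]$ shrink to $\xi$; this does not yet give a contradiction on its own, so I need to combine it with the dynamics.

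The key extra input is that $x_{g_n}^+$ is a genuine projective point with a $g_n$-invariant hyperplane $x_{g_n}^+\oplus x_{g_n}^0$ through it. The point of the proof will be to use the fact that $\eta$ is separated from $\xi$ (they span a line meeting $\Omega$) to control the repelling data. One clean route: since $(\xi,\eta)\in\Geod(\Omega)$, the open segment $(\xi,\eta)$ meets $\Omega$; pick $p\in(\xi,\eta)\cap\Omega$ and a small Hilbert ball $B=\overline B_\Omega(p,1)$, which by Remark~\ref{Hilbert VS Euclidean} is a compact subset of $\Omega$ bounded away from $\partial\Omega$. For $n$ large the axis $\axis(g_n)=x_{g_n}^+\oplus x_{g_n}^-$ passes uniformly close to $p$, hence meets $\Omega$; by Lemma~\ref{biproxiattractingpointsaresmooth} this forces $x_{g_n}^+$ to be \emph{smooth}, with $T_{x_{g_n}^+}\partial\Omega=x_{g_n}^+\oplus x_{g_n}^0$. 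Now a smooth point that lies in the interior of a boundary segment $[a_n,b_n]$ has its (unique) supporting hyperplane containing that whole segment; so $x_{g_n}^0$ must contain the direction of $[a_n,b_n]$. Passing to the limit, the supporting hyperplanes $T_{x_{g_n}^+}\partial\Omega$ subconverge to a supporting hyperplane $H$ of $\Omega$ at $\xi$, and one checks that $H$ contains $\eta$ if $x_{g_n}^-\to\eta$ forces the complementary subspace $x_{g_n}^0$ to accumulate onto $\eta$ — but $\eta\notin T_\xi\partial\Omega$ is impossible to force directly, so instead I use the segment: the limiting supporting hyperplane $H$ still supports $\Omega$ at $\xi$, and the shrinking segments $[a_n,b_n]\to\{\xi\}$ combined with smoothness let one produce, after rescaling in an affine chart, a nontrivial segment in $\partial\Omega$ through $\xi$ in the direction $\lim (b_n-a_n)/\|b_n-a_n\|$ — contradicting strong extremality of $\xi$. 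I would carry out this rescaling argument in the affine chart of Remark~\ref{Hilbert VS Euclidean}, where convexity makes the limit of boundary chords a genuine boundary chord.

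For part~(2): assume $M=\Omega/\Gamma$ is rank-one, so by Fact~\ref{period_is_translation_length} there is a rank-one periodic vector, whose lift has endpoints $(\xi_0,\eta_0)\in\Geod(\Omega)$ with $\xi_0,\eta_0$ strongly extremal and smooth, and in particular $\xi_0,\eta_0\in\Lambda_\Gamma$; hence $T^1M_{bip}\neq\emptyset$. By Fact~\ref{densitybis}.\ref{Item : proximalite} $\Gamma$ contains a proximal element, so by Fact~\ref{densitybis}.\ref{Item : densite bip} the pairs $(x_\gamma^+,x_\gamma^-)$ with $\gamma\in\Gamma$ biproximal are dense in $\Lambda_\Gamma\times\Lambda_\Gamma$. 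Given any $v\in T^1M_{bip}$ with lift $\tilde v$ having endpoints $(\xi,\eta)\in(\Lambda_\Gamma)^2\cap\Geod(\Omega)$, I want biproximal $\gamma\in\Gamma$ with axis meeting $\Omega$, $\gamma$ rank-one, and $(x_\gamma^+,x_\gamma^-)$ close to $(\xi,\eta)$; the associated periodic geodesic then has a lift close to $\tilde v$, which gives density of rank-one periodic vectors in $T^1M_{bip}$ once one notes (as in Corollary~\ref{bipsaredense}) that closeness of endpoint-pairs in $\Geod(\Omega)$ yields closeness of the corresponding vectors. The subtlety is that $\xi$ need not be strongly extremal, so part~(1) does not apply at $\xi$ itself. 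To handle this I use minimality of the $\Gamma$-action on $\Lambda_\Gamma$ (Remark~\ref{minimality}): the set of strongly extremal points of $\Lambda_\Gamma$ is nonempty (it contains $\xi_0$) and $\Gamma$-invariant up to closure, so it is dense in $\Lambda_\Gamma$; hence I may first move $\xi$ slightly to a strongly extremal point $\xi'\in\Lambda_\Gamma$ with $(\xi',\eta)$ still in $\Geod(\Omega)$, apply part~(1) to get neighbourhoods $U\times V$, and then use Fact~\ref{densitybis}.\ref{Item : densite bip} to find a biproximal $\gamma\in\Gamma$ with $(x_\gamma^+,x_\gamma^-)\in U\times V$ as close as desired to $(\xi,\eta)$; part~(1) makes $\gamma$ rank-one, and rank-one elements have axes meeting $\Omega$ by Lemma~\ref{equivalences rang un}.\ref{Item : e}, so the periodic geodesic is defined. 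The main obstacle is the rescaling step in part~(1): making precise that "smooth point $+$ shrinking boundary chords through it $+$ repelling point bounded away" forces a limiting boundary segment through $\xi$, and this is where I expect to spend the bulk of the argument, using convexity in a fixed affine chart and the characterisation of smoothness via supporting hyperplanes.
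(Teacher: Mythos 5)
Part~(2) of your proposal follows the paper's route essentially verbatim (minimality of the $\Gamma$-action to densify strongly extremal points of $\Lambda_\Gamma$, then part~(1) plus Fact~\ref{densitybis}.\ref{Item : densite bip}), and that portion is fine. The problem is in part~(1): the step you defer to the end --- that shrinking boundary segments $[a_n,b_n]\to\{\xi\}$ through $x_{g_n}^+\to\xi$ can be ``rescaled'' to produce a nontrivial boundary segment through $\xi$ --- is false as a statement about convex sets. A strongly extremal (even smooth) boundary point can perfectly well be an accumulation point of nontrivial boundary segments whose two endpoints both converge to it: take for instance the convex hull in an affine chart of the points $(\pm\tfrac1n,\tfrac1{n^2})$, $n\geq1$, together with the origin; the origin is smooth and strongly extremal, yet the boundary near it is a union of nontrivial chords shrinking to it. So no contradiction can be extracted from the segments through $x_{g_n}^+$ alone, and your fallback argument collapses exactly where you anticipated spending the bulk of the work.

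The missing idea is to couple the attracting and repelling fixed points through the invariant complement $x_{g_n}^0$, which is what the paper does via duality. If $g_n$ is biproximal, not rank-one, and its axis meets $\Omega$, then by Lemma~\ref{equivalences rang un}.\ref{Item : e} the axis of $g_n$ in $\PR(V^*)$ misses $\Omega^*$, and Fact~\ref{dualité}.\ref{Item : dualite 2} translates this into the existence of a point $\xi_n\in x_{g_n}^0\cap\partial\Omega$. This single point lies in \emph{both} supporting hyperplanes $x_{g_n}^{\pm}\oplus x_{g_n}^0$, so \emph{both} segments $[\xi_n,x_{g_n}^+]$ and $[\xi_n,x_{g_n}^-]$ lie in $\partial\Omega$. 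Passing to a subsequential limit $\xi_n\to\xi'$, the first family of segments gives $[\xi',\xi]\subset\partial\Omega$, whence $\xi'=\xi$ by strong extremality; the second then gives $[\xi,\eta]\subset\partial\Omega$, contradicting $(\xi,\eta)\in\Geod(\Omega)$. Your reduction via Lemma~\ref{equivalences rang un}.\ref{Item : c} to ``$x_{g_n}^+$ not strongly extremal'' discards precisely the half of the information (the segment reaching towards $x_{g_n}^-$) that makes the limit argument close; you even noted that you could not force $\eta\in T_\xi\partial\Omega$ directly, and the duality step is what supplies that leverage.
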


\begin{proof}
\begin{enumerate}
\item Let us assume by contradiction that there is a sequence of biproximal automorphisms $(g_n)_{n\in\N}$ which are not rank-one, and such that $(x_{g_n}^+)_{n\in\N}$ and $(x_{g_n}^-)_{n\in\N}$ respectively converge to $\xi$ and $\eta$. For $n$ large enough, $(x_{g_n}^-,x_{g_n}^+)\in\Geod(\Omega)$, hence by Lemma~\ref{equivalences rang un}.\ref{Item : e}, the axis of $g_n$ in $\PR(V^*)$ does not intersect $\Omega^*$, which exactly means that there exists $\xi_n\in x_{g_n}^0\cap\partial\Omega$, by Fact~\ref{dualité}.\ref{Item : dualite 2}. Up to extraction we can assume that $(\xi_n)_{n\in\N}$ converges to some $\xi'\in\partial\Omega$. Since 
\[[\xi_n,x_{g_n}^+]\subset (x_{g_n}^0\oplus x_{g_n}^+)\cap\overline{\Omega}\subset\partial\Omega\]
for all $n$, passing to the limit we obtain that $[\xi',\xi]\subset\Omega$, which implies that $\xi'=\xi$, because $\xi$ is strongly extremal. Similarly, $[\xi_n,x_{g_n}^-]\subset\partial\Omega$ for all $n$, so $[\xi,\eta]\subset\partial\Omega$, which is a contradiction.
\item We denote by $\partial_{sse}\Omega$ the set of smooth and extremal points of $\partial\Omega$. By assumption, the $\Gamma$-invariant subset $\Lambda_\Gamma\cap\partial_{sse}\Omega$ is non-empty, hence dense in $\Lambda_\Gamma$ since the action of $\Gamma$ on $\Lambda_\Gamma$ is minimal (Remark~\ref{minimality}). Therefore, it is enough to show that attracting/repelling pairs of rank-one biproximal elements of $\Gamma$ are dense in $(\Lambda_\Gamma\cap\partial_{sse}\Omega)^2$. Consider a pair of points $\xi\neq\eta$ in $\Lambda_\Gamma\cap\partial_{sse}\Omega$, and a neighbourhood $U$ of $(\xi,\eta)$ in $\partial\Omega^2$; let us check that $U$ contains the attracting/repelling pair of a rank-one element of $\Gamma$. By Proposition~\ref{rang un dense dans T1Mbip}.\ref{Item : rg1 dense}, there exist a sub-neighbourhood of $U$ which does not contain the attracting/repelling pair of a non-rank-one biproximal element, and by Fact~\ref{densitybis}.\ref{Item : densite bip}, this sub-neighbourhood must contain the attracting/repelling pair of a biproximal (hence rank-one) element.\qedhere
\end{enumerate}
\end{proof}

\section{Non-arithmeticity of the length spectrum}\label{density}

In this section we prove the following.
 
\begin{prop}\label{local_non_arithmeticity}
 Let $\Omega\subset \PR(V)$ be a properly convex open set and $\Gamma\subset\Aut(\Omega)$ a strongly irreducible discrete subgroup. Denote by $M$ the quotient $\Omega/\Gamma$. Let $U\subset T^1M_{bip}$ be a non-empty open set. Then the additive group generated by the lengths of biproximal periodic geodesics \emph{through $U$} is dense in $\R$.
\end{prop}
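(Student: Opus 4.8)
The plan is to show that the length spectrum is non-arithmetic locally around any biproximal periodic vector by exhibiting, near such a vector, two biproximal periodic geodesics whose lengths have an irrational (or at least non-commensurable) ratio, and in fact to produce a whole continuum of lengths so that the group they generate cannot be discrete. The key tool is Fact~\ref{densitybis}.\ref{Item : densite bip}: attracting/repelling pairs of biproximal elements of $\Gamma$ are dense in $\Lambda_\Gamma\times\Lambda_\Gamma$, and the length of the associated periodic geodesic is $\ell(\gamma)=\frac12(\lambda_1(\tilde\gamma)-\lambda_{d+1}(\tilde\gamma))$ by Fact~\ref{period_is_translation_length} and \eqref{longueur de translation}. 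So the heart of the matter is to understand how $\ell$ varies as the biproximal element varies, and to show it is not locally constrained to a discrete set.

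First I would fix a non-empty open $U\subset T^1M_{bip}$; by Corollary~\ref{bipsaredense} there is a biproximal periodic vector $v\in U$, associated to a biproximal $\gamma_0\in\Gamma$ whose axis meets $\Omega$, with $(x_{\gamma_0}^+,x_{\gamma_0}^-)\in\Lambda_\Gamma^2$. Choose small neighbourhoods $U^+\ni x_{\gamma_0}^+$ and $U^-\ni x_{\gamma_0}^-$ in $\PR(V)$ such that any biproximal $g\in\Aut(\Omega)$ with $(x_g^+,x_g^-)\in U^+\times U^-$ has its axis meeting $\Omega$ and the corresponding periodic geodesic passing through $U$ (this is an open condition; continuity of the relevant data is Remark~\ref{prox_are_open} together with the fact that the axis and the tangent vectors depend continuously on the fixed points). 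It then suffices to show that
\[\{\ell(\gamma):\gamma\in\Gamma\text{ biproximal},\ (x_\gamma^+,x_\gamma^-)\in U^+\times U^-\}\]
generates a dense additive subgroup of $\R$.

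The main step — and the main obstacle — is a "ping-pong / product" argument in the style of \cite{benoist2000automorphismes, BenoistPropAsymp2}: take two biproximal elements $\gamma_1,\gamma_2\in\Gamma$ whose attracting/repelling data lie in $U^+\times U^-$ (possible by Fact~\ref{densitybis}.\ref{Item : densite bip}), and consider the family $\gamma_1^m\gamma_2^n$ for large $m,n$. By North–South dynamics these are again biproximal with fixed points close to those of $\gamma_1$ and $\gamma_2$, hence still in $U^+\times U^-$; and one shows, via the contraction estimates for proximal maps, an asymptotic expansion
\[\ell(\gamma_1^m\gamma_2^n)=m\,\ell(\gamma_1)+n\,\ell(\gamma_2)+c(m,n),\]
where the cross-term $c(m,n)$ is bounded (indeed converges as $m,n\to\infty$). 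From this, the subgroup of $\R$ generated by these lengths contains elements arbitrarily close to $\mathbb{Z}\ell(\gamma_1)+\mathbb{Z}\ell(\gamma_2)+(\text{bounded errors})$; combining with $\gamma_1^{m+1}\gamma_2^n$ one isolates $\ell(\gamma_1)$ up to a vanishing error, and similarly $\ell(\gamma_2)$, so the generated group is dense unless $\ell(\gamma_1)/\ell(\gamma_2)\in\mathbb{Q}$ for all such pairs. To rule that out, I would use strong irreducibility: if all biproximal elements with data in $U^+\times U^-$ had commensurable translation lengths, then (since these data are dense in $\Lambda_\Gamma^2$, and using that $\Lambda_\Gamma$ is the minimal closed invariant set, Remark~\ref{minimality}) one propagates this to a global arithmeticity of the length spectrum, which is incompatible with $\Gamma$ being strongly irreducible and Zariski-dense enough to contain elements realising a continuum of eigenvalue ratios — concretely, one can perturb $\gamma_1$ slightly within $\Gamma$ (again by density of biproximal pairs) to change $\ell$ by an amount not commensurable to the old value. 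I expect the delicate point is making the estimate on $c(m,n)$ uniform and controlling that the perturbed elements stay biproximal with axis through $U$; this is exactly where the algebraic arguments of \cite{benoist2000automorphismes, BenoistPropAsymp2} on products of proximal transformations do the work, and I would invoke them rather than redo the contraction estimates from scratch.
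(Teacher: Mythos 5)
Your proposal is in the right spirit --- products of biproximal elements with attracting/repelling data near those of a fixed $\gamma_0$, asymptotic additivity of $\ell$, and an appeal to Benoist --- but the decisive step, ruling out arithmeticity, is not actually proved. The claim that ``one can perturb $\gamma_1$ slightly within $\Gamma$ to change $\ell$ by an amount not commensurable to the old value'' is exactly the non-arithmeticity statement you are trying to establish, so as written the argument is circular. There is no a priori reason why all the $\ell(\gamma_1^m\gamma_2^n)$, together with the bounded correction terms $c(m,n)$, could not be commensurable, and strong irreducibility alone does not rule this out without invoking the precise theorem that does: Benoist's density of the additive group generated by Jordan projections $\lambda_G(\Gamma)$ in the Cartan subspace $\mathfrak{a}_G$ for any Zariski-dense sub-semi-group $\Gamma$ of a connected semi-simple Lie group $G$ (Fact~\ref{PropAsymp2}). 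The paper uses this as the engine, via Corollary~\ref{densitytranslationlengthsbis}, after checking that $\ell$ is the value of a linear form on the Jordan projection.

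The second gap is structural and explains why the paper builds more machinery than a single pair $\gamma_1,\gamma_2$: to apply Fact~\ref{PropAsymp2} one needs a sub-semi-group of $\widetilde\Gamma\subset\SL(V)$ that is Zariski-dense in a \emph{semi-simple, non-compact} group, and simultaneously has the property that \emph{every} element has its axis through $\widetilde U$. A semi-group generated by just $\gamma_1,\gamma_2$ need not be strongly irreducible (its Zariski closure could be too small), so the paper first produces, via Lemma~\ref{lem:critere irred forte bis}, enough conjugates $\gamma_1,\dots,\gamma_k$ to guarantee strong irreducibility (Lemma~\ref{lem:critere irred forte}), hence semi-simple non-compact Zariski closure (Fact~\ref{zclosuresemisimple}); then, via the ping-pong Lemma~\ref{freesubgroupbis}, it forms the semi-group generated by the words $\gamma_1^N$ and $\gamma_1^N\gamma_i^N\gamma_1^N$, so that every element begins and ends with $\gamma_1^{N}$ and therefore has attracting/repelling pair near that of $\gamma_1$, hence axis through $\widetilde U$. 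Your plan addresses neither the Zariski-density verification nor the control that \emph{all} (not just some) elements of the relevant semi-group have axis in $U$ (for arbitrary words, $x_\gamma^+$ and $x_\gamma^-$ may land near the data of different $\gamma_i$'s). Once these two pieces are in place, there is no need for the ``isolate $\ell(\gamma_1),\ell(\gamma_2)$ and show they are incommensurable'' step at all: Fact~\ref{PropAsymp2} gives density directly.
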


In other words, not only are biproximal periodic geodesics dense in $T^1M_{bip}$, but moreover the \emph{local} length spectrum (through any open set $U$) is non-arithmetic. The idea that the topological mixing of the geodesic flow should be equivalent to the non-arithmeticity of the length spectrum is not new; this question was elucidated in the context of negatively curved Riemannian manifolds by Dal'bo \cite[Th.\,A]{Dal00}.

Our proofs are heavily influenced by the work of Benoist \cite{benoist2000automorphismes, BenoistPropAsymp2}; see also \cite[Ch.\,7]{benoist2016random}.

\subsection{Density of the group generated by Jordan projections}

We gather here two results which imply that any strongly irreducible semi-group of automorphisms of a properly convex open set has a non-arithmetic length spectrum.

A proof of the following result can be found in \cite[Prop.\,6.5]{CM2014flot}.

\begin{fait}[\!{\!\!\cite[Rem.\,p.\,17]{benoist2000automorphismes}}]\label{zclosuresemisimple}
 Let $\Omega\subset \PR(V)$ be a properly convex open set and $\Gamma\subset\SL(V)$ a strongly irreducible discrete subgroup preserving $\Omega$. Then the Zariski closure of $\Gamma$ in $\SL(V)$ is semi-simple and non-compact.
\end{fait}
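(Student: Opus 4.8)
The plan is to study $G$, the Zariski closure of $\Gamma$ in $\SL(V)$, as an algebraic $\R$-group, and to establish the two assertions — non-compactness and semi-simplicity — essentially independently. We may assume $\dim V\geq 2$, the case $\dim V=1$ being vacuous. Throughout I would use the standard equivalence between strong irreducibility of $\Gamma$ and irreducibility of the action of the identity component $G^0$ on $V$: a $G^0$-invariant proper subspace has finitely many $G$-translates whose union is a $\Gamma$-invariant finite union of proper subspaces, and conversely a $\Gamma$-invariant finite union of proper subspaces of minimal cardinality is permuted by $G$, so each of its members is fixed by the connected group $G^0$.

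For non-compactness: if $G$ (equivalently $G(\R)$) were compact, then $\Gamma$, being a discrete subgroup of a compact Lie group, would be finite. But a finite subgroup of $\SL(V)$ preserving $\Omega$ fixes a point of $\Omega$ — namely the projectivisation of the sum of one of its orbits inside a $\Gamma$-invariant properly convex cone over $\Omega$ — hence preserves a line $L\subset V$; being finite, it preserves some inner product, hence also a complement of $L$, yielding a proper nonzero invariant subspace. This contradicts irreducibility (a fortiori strong irreducibility), so $G$ is non-compact.

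For semi-simplicity I would first show $G$ is reductive and then that the central torus of $G^0$ is trivial. Since $G^0$ acts irreducibly on $V$, the unipotent radical $R_u(G^0)$, being normal in $G^0$, has a nonzero fixed subspace (unipotent groups fix nonzero vectors), which is $G^0$-invariant, hence equal to $V$; so $R_u(G^0)$ acts trivially and, by faithfulness, is trivial, i.e. $G^0$ is reductive. Now let $S$ be the identity component of the centre of $G^0$; it is a torus. Over $\C$ the $S$-weight spaces in $V\otimes\C$ are $G^0$-invariant (as $S$ is central), so irreducibility of $V$ over $\R$ leaves exactly two possibilities: either $S$ has a single, real-valued weight, in which case $S$ acts on $V$ by real scalars and hence — lying in $\SL(V)$ and being connected — is trivial; or some weight $\chi$ is non-real, in which case $\chi$ and $\overline\chi$ give two weight spaces and $V$ acquires a $G^0$-invariant complex structure $J$. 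The crux is that this second possibility is incompatible with preserving a properly convex set: a real-linear automorphism of $V$ commuting with a complex structure is never proximal (its top-modulus complex eigenvalue is either non-real, hence shares its modulus with its conjugate, or real, hence of even real multiplicity), whereas $\Gamma$ — strongly irreducible and preserving the properly convex open $\Omega$ — contains a proximal element by Fact~\ref{densitybis}.\ref{Item : proximalite}; a suitable power of that element lies in $G^0(\R)\subseteq\GL_\C(V,J)$ and remains proximal, a contradiction. Hence $S$ is trivial, $G^0$ is semi-simple, and $G$ is semi-simple and non-compact.

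The main obstacle is this last step — ruling out a non-trivial central torus — because purely representation-theoretic data do not suffice and the hypothesis that $\Gamma$ preserves a properly convex set must genuinely enter; the device is to compress that hypothesis into the single consequence ``$\Gamma$ contains a proximal element'' (Fact~\ref{densitybis}) and to play it against the obstruction that $\C$-linear maps are never real-proximal. The remaining points — that a power of a proximal element stays inside $G^0$ and stays proximal, and the eigenvalue bookkeeping for $\C$-linear maps viewed over $\R$ — are routine.
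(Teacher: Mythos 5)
The paper does not give its own proof of this Fact; it attributes the result to Benoist and refers to Crampon--Marquis for a written proof. Your argument is correct and, as far as I can tell, follows the same general outline as those sources: reductivity of $G^0$ comes for free from irreducibility (trivial unipotent radical via the fixed space of $R_u(G^0)$), and the central torus is killed by combining the $\SL(V)$ constraint (which rules out a nontrivial real scalar action by a connected central subgroup) with the existence of a proximal element in $\Gamma$ (which rules out a $G^0$-invariant complex structure, since a power of that element lies in $G^0$, remains proximal, yet would be $\C$-linear). The eigenvalue bookkeeping for $\C$-linear maps and the reduction of strong irreducibility of $\Gamma$ to irreducibility of $G^0$ are, as you say, routine.

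Two small remarks. In the non-compactness step, the properly convex cone $C$ lifting $\Omega$ need not be $\Gamma$-invariant: an element of $\Gamma\subset\SL(V)$ may carry $C$ to $-C$, in which case the sum over a $\Gamma$-orbit can vanish. The fix is standard --- $\Gamma\cap\Aut(C)$ has index at most $2$ in $\Gamma$, is still irreducible by strong irreducibility, and does fix a ray of $C$, giving the same Maschke contradiction --- but it should be said. Alternatively, you can shortcut non-compactness with the same tool you use for the center: a compact $G$ preserves an inner product, hence is conjugate into $\mathrm{O}(V)$, whose elements have all eigenvalues of modulus one and so are never proximal, contradicting Fact~\ref{densitybis}.\ref{Item : proximalite}. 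That avoids the cone discussion entirely and makes the entire proof hinge on the single consequence of the hypotheses that $\Gamma$ contains a proximal element.
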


The following fact uses the language of semi-simple Lie groups, see \eg \cite[Ch.\,6]{benoist2016random} for definitions. 

\begin{fait}[\!{\!\cite[Prop.\,p.2]{BenoistPropAsymp2}}]\label{PropAsymp2}
 Let $G$ be a connected real semi-simple linear Lie group. Let $\mathfrak{a}_G$ be a Cartan subspace of its Lie algebra, let $\mathfrak{a}_G^+\subset\mathfrak{a}_G$ be a closed Weyl chamber, and let $\lambda_G : G \rightarrow \mathfrak{a}_G^+$ be the associated Jordan projection. Let $\Gamma\subset G$ be a Zariski-dense sub-semi-group. Then the additive group generated by $\lambda_G(\Gamma)$ is dense in $\mathfrak{a}_G$.
\end{fait}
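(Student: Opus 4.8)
The plan is to show that the closure $\Lambda:=\overline{\langle\lambda_G(\Gamma)\rangle}$, which is a closed subgroup of $\mathfrak{a}_G\cong\R^r$, is all of $\mathfrak{a}_G$. First I would invoke Benoist's limit-cone theorem for Zariski-dense subgroups \cite{BenoistPropAsymp}: the cone $\overline{\bigcup_{\gamma\in\Gamma}\R_{\geq 0}\lambda_G(\gamma)}$ has non-empty interior, so in particular $\Lambda$ spans $\mathfrak{a}_G$ as a vector space. A closed subgroup of $\R^r$ spanning $\R^r$ is either $\R^r$ or, in suitable linear coordinates, $\R^k\times\Z^{r-k}$ with $k<r$. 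Hence if $\Lambda\neq\mathfrak{a}_G$, there is a non-zero linear form $\varphi\in\mathfrak{a}_G^*$ with $\varphi(\lambda_G(\Gamma))\subset\Z$; the whole point is to rule this out, so I assume for contradiction that such a $\varphi$ exists.

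The dynamical input is the interplay between the Jordan projection and the Iwasawa (Busemann) cocycle $\sigma\colon G\times G/P\to\mathfrak{a}_G$ on the full flag variety $G/P$ attached to $\mathfrak{a}_G^+$. Since $\Gamma$ is Zariski-dense it contains many loxodromic $\R$-regular elements (that $\lambda_G(g)$ lies in the interior of $\mathfrak a_G^+$), and in fact we may choose two of them $g,h\in\Gamma$ whose attracting/repelling flags $\xi_g^{\pm},\xi_h^{\pm}\in G/P$ are pairwise in general position (such pairs form a non-empty Zariski-open set, by the abundance of loxodromic regular elements underlying Fact~\ref{densitybis}). For a loxodromic regular $g$ one has $\lambda_G(g)=\sigma(g,\xi_g^+)$, and the cocycle relation $\sigma(g_1g_2,\zeta)=\sigma(g_1,g_2\zeta)+\sigma(g_2,\zeta)$ together with the exponential contraction of $g^n$ towards $\xi_g^+$ off the Schubert cell of $\xi_g^-$ yields, for $\zeta$ in general position with $\xi_g^-$,
\[ \sigma(g^n,\zeta)=n\lambda_G(g)+c(g,\zeta)+o(1),\quad\text{where}\quad c(g,\zeta):=\sum_{k\geq 0}\bigl(\sigma(g,g^k\zeta)-\lambda_G(g)\bigr), \]
the series converging geometrically and $c(g,\cdot)$ being continuous on the good locus. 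Since $\Gamma$ is a sub-semi-group, $g^nh^m\in\Gamma$ for all $n,m\geq1$; and $g^nh^m$ is loxodromic regular for $\min(n,m)$ large, with attracting flag tending to $\xi_g^+$ and repelling flag tending to $\xi_h^-$. Writing $\lambda_G(g^nh^m)=\sigma(g^nh^m,\xi_{g^nh^m}^+)=\sigma(g^n,h^m\xi_{g^nh^m}^+)+\sigma(h^m,\xi_{g^nh^m}^+)$ and applying the displayed estimate to each factor (the first argument $h^m\xi^+_{g^nh^m}$ tends to $\xi_h^+$, which is transverse to $\xi_g^-$) gives
\[ \lambda_G(g^nh^m)=n\lambda_G(g)+m\lambda_G(h)+\beta(g,h)+o(1),\quad\text{where}\quad \beta(g,h):=c(g,\xi_h^+)+c(h,\xi_g^+), \]
as $\min(n,m)\to\infty$.

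Now $\varphi(\lambda_G(g)),\varphi(\lambda_G(h)),\varphi(\lambda_G(g^nh^m))$ all lie in $\Z$ and $\lambda_G(g^n)=n\lambda_G(g)$ by homogeneity of the Jordan projection, so letting $\min(n,m)\to\infty$ and using that $\Z$ is closed forces
\[ \varphi(\beta(g,h))=\varphi\bigl(c(g,\xi_h^+)\bigr)+\varphi\bigl(c(h,\xi_g^+)\bigr)\in\Z \]
for every pair $g,h$ of transverse loxodromic regular elements of $\Gamma$. The hard part is to turn this rigidity into a contradiction. The idea is that, with $g$ fixed, the function $\eta\mapsto\varphi(c(g,\eta))$ is continuous and \emph{non-constant} on the $\Gamma$-minimal closed subset $\Lambda_\Gamma^{G/P}\subset G/P$ (the closure of the attracting flags of loxodromic elements of $\Gamma$), so its image is a non-degenerate interval; allowing $g$ and $h$ to vary independently enough, the pairs $(\varphi(c(g,\xi_h^+)),\varphi(c(h,\xi_g^+)))$ would have to avoid the countable union of lines $\{s+t\in\Z\}$, which is impossible. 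Making the words ``non-constant'' and ``independently enough'' precise is where Zariski-density is used in full strength: by the $\Gamma$-equivariance of $\sigma$ and of the $c(g,\cdot)$, the assumed arithmeticity propagates to a $\Gamma$-invariant arithmetic structure on $G/P\times\mathfrak{a}_G$, whose stabiliser is a proper algebraic subgroup of the connected group $G$ — contradicting Zariski-density. This non-arithmeticity of the Iwasawa cocycle is the core of the argument (compare \cite{BenoistPropAsymp2} and \cite[Ch.\,7]{benoist2016random}); everything else is contraction dynamics on $G/P$ and bookkeeping. Having excluded every non-zero $\varphi$ with $\varphi(\lambda_G(\Gamma))\subset\Z$, we conclude $\Lambda=\mathfrak{a}_G$.
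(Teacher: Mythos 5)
First, a point of comparison: the paper does not prove Fact~\ref{PropAsymp2} at all --- it is quoted as an external result of Benoist \cite{BenoistPropAsymp2} --- so the only meaningful comparison is with Benoist's published proof. Your sketch reproduces its architecture faithfully: reduce, via the limit-cone theorem and the classification of closed subgroups of $\R^r$, to excluding a non-zero linear form $\varphi$ with $\varphi(\lambda_G(\Gamma))\subset\Z$; then use the cocycle identity and contraction on $G/P$ to get the asymptotic expansion $\lambda_G(g^nh^m)=n\lambda_G(g)+m\lambda_G(h)+\beta(g,h)+o(1)$ and deduce $\varphi(\beta(g,h))\in\Z$ for transverse loxodromic pairs in $\Gamma$. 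Up to that point the argument is essentially correct (modulo routine uniformity checks in the $o(1)$ terms).

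However, there is a genuine gap at the decisive step, and it is precisely the step where the whole difficulty of the theorem lives. From $\varphi(\beta(g,h))\in\Z$ you must derive a contradiction, and since $\Gamma$ is countable no connectedness argument applies directly on $\Gamma$: one must extend $\beta$ continuously to a connected piece of the transversality locus in the limit set of $G/P\times G/P$, check that the constraint $\varphi(\beta)\in\Z$ persists on that connected set (hence $\varphi\circ\beta$ is constant there), and then prove that $\varphi\circ\beta$ is \emph{non-constant} --- this non-constancy is exactly where Zariski-density does its work, and it is the real content of Benoist's proposition. Your text explicitly defers it (``The hard part is\dots'', ``The idea is\dots'', ``Making [this] precise is where\dots''), and the proposed mechanism --- that the arithmeticity ``propagates to a $\Gamma$-invariant arithmetic structure on $G/P\times\mathfrak{a}_G$ whose stabiliser is a proper algebraic subgroup'' --- is asserted, not proved: it is not explained what this structure is, why its stabiliser would be algebraic, or why it would be proper. (Note also that the sentence about the pairs having to ``avoid'' the union of lines $\{s+t\in\Z\}$ has the logic inverted: the constraint forces the pairs to \emph{lie on} that union, and the contradiction must come from exhibiting a connected family along which $s+t$ actually varies.) Benoist's proof, and its reworking in \cite[Ch.\,7]{benoist2016random}, spend most of their length on exactly this non-arithmeticity of the cocycle; as it stands your proposal is a correct reduction of the theorem to its hardest part, not a proof of it.
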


\begin{cor}\label{densitytranslationlengthsbis}
 Let $\Gamma\subset\SL(V)$ be a sub-semi-group whose Zariski closure in $\SL(V)$ is irreducible, semi-simple and non-compact. Then 
 \[\overline{\langle \ell(\gamma), \ \gamma\in\Gamma\rangle}=\R,\]
where $\ell(\gamma)$ is given by \eqref{l(g)}.
\end{cor}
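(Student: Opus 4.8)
The plan is to deduce Corollary~\ref{densitytranslationlengthsbis} from Fact~\ref{PropAsymp2} by reading the quantity $\ell(\gamma)$ as a linear functional evaluated on the Jordan projection of $\gamma$, and then checking that this functional does not vanish identically on the Cartan subspace. Concretely, let $H$ be the Zariski closure of $\Gamma$ in $\SL(V)$; by hypothesis $H$ is irreducible, semi-simple and non-compact. Let $G=H^\circ$ be the identity component (a connected semi-simple linear Lie group), and note that $\Gamma$ need not lie in $G$, so the first step is to pass to a finite-index sub-semi-group: one takes $\Gamma'=\Gamma\cap G$ (or, if that is awkward for a semi-group, the sub-semi-group generated by sufficiently high powers of generators, which lies in $G$ and is still Zariski-dense in $G$). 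Since the additive group generated by $\{\ell(\gamma'):\gamma'\in\Gamma'\}$ is contained in the one generated by $\{\ell(\gamma):\gamma\in\Gamma\}$, it suffices to prove density for $\Gamma'$, so from now on we may assume $\Gamma\subset G$ is Zariski-dense.

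Next I would fix a Cartan subspace $\mathfrak{a}_G\subset\mathrm{Lie}(G)$ and a closed Weyl chamber $\mathfrak{a}_G^+$, with associated Jordan projection $\lambda_G:G\to\mathfrak{a}_G^+$. The key observation is that $\ell$ factors as $\ell=\varphi\circ\lambda_G$ for a suitable linear form $\varphi$ on $\mathfrak{a}_G$: indeed, for $g\in G$ the real numbers $\lambda_1(g),\dots,\lambda_{d+1}(g)$ (logarithms of moduli of eigenvalues, in non-increasing order) are obtained from $\lambda_G(g)$ by composing with the representation $G\hookrightarrow\SL(V)$, i.e. with the restriction to $\mathfrak{a}_G$ of the weights of the standard representation; in particular $\lambda_1$ is the highest weight and $\lambda_{d+1}$ the lowest weight of $V$ evaluated on $\lambda_G(g)$, up to the Weyl-chamber normalisation, and these are linear in $\lambda_G(g)$. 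Hence $\ell(g)=\tfrac12(\lambda_1-\lambda_{d+1})(\lambda_G(g))=:\varphi(\lambda_G(g))$ with $\varphi\in\mathfrak{a}_G^*$. By Fact~\ref{PropAsymp2}, the additive group generated by $\lambda_G(\Gamma)$ is dense in $\mathfrak{a}_G$, so the additive group generated by $\ell(\Gamma)=\varphi(\lambda_G(\Gamma))$ is dense in $\varphi(\mathfrak{a}_G)$, which is all of $\R$ provided $\varphi\neq0$.

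The main obstacle is therefore to check that $\varphi$ is not identically zero on $\mathfrak{a}_G$, equivalently that $\lambda_1-\lambda_{d+1}$ does not vanish on the whole Cartan subspace. If it did, then every element of $G$ — in particular every $\gamma\in\Gamma$ — would satisfy $\lambda_1(\gamma)=\lambda_{d+1}(\gamma)$, i.e. all eigenvalues of every element of $\Gamma$ would have the same modulus; this would force $\ell(\gamma)=0$ for all $\gamma$, so the translation length of every element of $\Gamma$ on $(\Omega,d_\Omega)$ vanishes by \eqref{longueur de translation}. But since $G$ is non-compact, it contains an $\R$-split element (a non-trivial one-parameter subgroup with real eigenvalues of distinct moduli in the standard representation — here I use irreducibility so that $G$ acts non-trivially, hence the image of a split torus acts with at least two distinct eigenvalue moduli), so the highest and lowest weights of $V$ differ on $\mathfrak{a}_G$ and $\varphi\neq0$. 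Alternatively, and perhaps more cleanly in this convex-geometric setting: by Fact~\ref{densitybis}.\ref{Item : proximalite}, a strongly irreducible $\Gamma$ preserving a properly convex $\Omega$ contains a proximal element $\gamma_0$, for which $\lambda_1(\gamma_0)>\lambda_2(\gamma_0)\geq\cdots$, hence $\ell(\gamma_0)>0$; applying the Zariski-closure hypothesis this shows $\varphi$ is non-zero on $\mathfrak{a}_G$. (Strictly, Fact~\ref{densitybis} is stated for groups; for a semi-group one instead notes directly that non-compactness of the irreducible $H$ forces some element with eigenvalues of distinct moduli, hence $\varphi\not\equiv0$.) This finishes the proof.
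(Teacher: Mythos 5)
Your proof follows essentially the same route as the paper's: fix a Cartan subspace $\mathfrak{a}_G$ of the Zariski closure $G$ inside the diagonal Cartan of $\SL(V)$, observe that although the reordering $\mathfrak{o}$ is not linear, the extreme coordinates $\lambda_1$ and $\lambda_{d+1}$ do restrict to linear forms on $\mathfrak{a}_G$ (the highest weights of $V$ and $V^*$ composed with $\mathrm d\rho$), so that $\ell=\varphi\circ\lambda_G$ for a linear form $\varphi\in\mathfrak{a}_G^*$, and then invoke Fact~\ref{PropAsymp2}. The one thing you spell out that the paper leaves implicit is the verification that $\varphi\neq 0$; this is a genuine point, and your argument is fine (non-compactness gives $\mathfrak{a}_G\neq 0$, and the injective map $\mathrm d\rho$ sends a nonzero element of $\mathfrak{a}_G$ to a nonzero trace-free diagonal matrix, on which $\lambda_1>\lambda_{d+1}$). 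One small slip: the alternative appeal to Fact~\ref{densitybis}.\ref{Item : proximalite} is out of place here, not only because that fact concerns groups rather than semi-groups, but more fundamentally because the Corollary's hypotheses say nothing about $\Gamma$ preserving a properly convex open set; you do flag this and supply the correct non-compactness argument as a fallback, so the proof stands.
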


\begin{proof}
 Recall that $V=\R^{d+1}$ so that $\SL(V)$ identifies with $\SL_{d+1}(\R)$. Let $\mathfrak{a}_{\SL(V)}$ (\resp $\mathfrak{a}_{\SL(V)}^+$) be the set of diagonal matrices (\resp diagonal matrices with non-increasing entries) in the Lie algebra of $\SL(V)$, and denote by $\mathfrak{o}:\mathfrak a_{\SL(V)}\rightarrow \mathfrak a_{\SL(V)}^+$ the reordering of the diagonal entries ($\mathfrak o=\lambda_{\SL(V)}\circ\exp$). Denote by $\epsilon_i:\mathfrak{a}_{\SL(V)}\rightarrow\R$ the linear form which gives the $i$-th entry of the diagonal for $i=1,\dots,d+1$. Denote by $G$ the Zariski closure of $\Gamma$ in $\SL(V)$, and $\rho:G\rightarrow \SL(V)$ the inclusion. The point is that we can choose a Cartan subspace $\mathfrak{a}_G$ of $G$ such that $\mathrm{d}\rho(\mathfrak{a}_G)\subset \mathfrak{a}_{\SL(V)}$, but we cannot always choose a Weyl chamber $\mathfrak{a}_G^+\subset\mathfrak{a}_G$ such that $\mathrm{d}\rho(\mathfrak{a}_G^+)\subset\mathfrak{a}_{\SL(V)}^+$. In other words, $\mathfrak{o}\circ\mathrm d\rho : \mathfrak a_G^+\rightarrow\mathfrak a_{\SL(V)}^+$ is not always the restriction of a linear map, and the additive subgroup of $\mathfrak{a}_{\SL(V)}$ generated by $\lambda_{\SL(V)}(\Gamma)$ is not always the image under $\mathrm{d}\rho$ of the additive subgroup of $\mathfrak{a}_G$ generated by $\lambda_G(\Gamma)$.
 
 It happens, however, that, for $i=1$ (\resp $i=d+1$), the map $\epsilon_i\circ\mathfrak{o}\circ\mathrm d\rho:\mathfrak a_G^+ \rightarrow \R$ is the restriction of a linear form, namely the highest weight $\chi_+$ of the representation $\rho$ of $G$ (\resp the highest weight $\chi_-$ of the dual representation in $\SL(V^*)$). As a consequence, the group generated by $\ell(\Gamma)$ is the image under the linear form $\frac12 (\chi_+-\chi_-)\circ\mathrm d\rho$ of the subgroup of $\mathfrak{a}_G$ generated by $\lambda_G(\Gamma)$, which is dense by Fact~\ref{PropAsymp2}.
\end{proof}

Note that Corollary~\ref{densitytranslationlengthsbis} can (and will) be applied, not only to subgroups, but to sub-semi-groups. This is the key observation that allows us to slightly shorten Bray's proof of topological mixing.

\subsection{Schottky subgroups of strongly irreducible groups}\label{Construction of a suitable free sub-semi-group}

In this section, we prove three lemmas that enable us, in the next section, to prove Proposition~\ref{local_non_arithmeticity} by constructing, in any strongly irreducible group of projective transformations containing at least one biproximal element, a strongly irreducible Schottky subgroup made of biproximal elements whose axis are well controlled. The idea is the following: using strong irreducibility and Lemma~\ref{lem:critere irred forte bis}, we find a family $\gamma_1,\dots,\gamma_k$ of biproximal elements that satisfy a finite number of algebraic conditions (see \ref{item:proxspan}, \ref{item:dualproxspan} and \ref{item:posgale} below); then we consider large powers of $\gamma_1,\dots,\gamma_k$ and we show, using the algebraic conditions, that they generate a group which is strongly irreducible (this is elementary, see Lemma~\ref{lem:critere irred forte}), and made of biproximal elements whose axis are well controlled (Lemma~\ref{freesubgroupbis}).

\begin{lemma} \label{lem:critere irred forte}
 Let $k\geq 2$ and let $\gamma_1,\dots,\gamma_k\in \GL(V)$ be biproximal elements such that:
 \begin{enumerate}[label=(\alph*)]
  \item \label{item:proxspan} $\Span(x_{\gamma_i}^\pm, \ 1\leq i\leq k)=V$,
  \item \label{item:dualproxspan} $\bigcap_{1\leq i\leq k}x_{\gamma_i}^0=0$,
  \item \label{item:posgale} $x_{\gamma_i}^\alpha\not\subset x_{\gamma_j}^\beta\oplus x_{\gamma_j}^0$ for any $1\leq i\neq j\leq k$ and $\alpha,\beta\in\{\pm\}$.
 \end{enumerate}
Then the group $\Gamma:=\langle \gamma_i, 1\leq i\leq k\rangle$ generated by $\gamma_1,\dots \gamma_k$ acts strongly irreducibly on $V$.
\end{lemma}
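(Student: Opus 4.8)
The plan is to show directly that for any finite-index subgroup $\Gamma'\leq\Gamma$ — equivalently, for the purpose of strong irreducibility, for any $\Gamma$-invariant finite union of proper subspaces — no nonzero proper subspace can be stabilized. Since proving strong irreducibility is the same as proving that every finite-index subgroup acts irreducibly, and each $\gamma_i$ has infinite order (being biproximal), I would observe that any finite-index subgroup $\Gamma'$ still contains a nonzero power $\gamma_i^{n_i}$ of each generator, and $\gamma_i^{n}$ is again biproximal with the \emph{same} attracting/repelling points $x_{\gamma_i}^\pm$ and the same invariant complement $x_{\gamma_i}^0$. Hence conditions \ref{item:proxspan}, \ref{item:dualproxspan}, \ref{item:posgale} are inherited verbatim by $\{\gamma_i^{n_i}\}$, and it suffices to prove the following: \emph{if biproximal elements $\gamma_1,\dots,\gamma_k$ satisfy \ref{item:proxspan}--\ref{item:posgale}, then the group they generate acts irreducibly.} This reduction handles the ``strongly'' for free, so the real content is plain irreducibility.

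For irreducibility, suppose $W\subset V$ is a nonzero $\Gamma$-invariant subspace; I want to conclude $W=V$. The key mechanism is proximal dynamics: if $W\not\subset x_{\gamma_i}^0$ for some $i$, then picking $w\in W\smallsetminus x_{\gamma_i}^0$ and writing $w=w^+ + w^0$ in the decomposition $\axis(\gamma_i)\oplus x_{\gamma_i}^0$ refined further along the axis, the normalized iterates $\gamma_i^n w$ converge projectively to $x_{\gamma_i}^+$, so $x_{\gamma_i}^+\subset W$; applying $\gamma_i^{-n}$ the same way gives $x_{\gamma_i}^-\subset W$ provided $W\not\subset x_{\gamma_i}^-\oplus x_{\gamma_i}^0$ — but if $W$ were contained in $x_{\gamma_i}^0\oplus x_{\gamma_i}^-$ for \emph{every} $i$ we would combine with the case analysis below. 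So the first claim is: there exists $i$ with $\{x_{\gamma_i}^+, x_{\gamma_i}^-\}\subset W$. To get this, start from the fact that $W\neq 0$, so by \ref{item:dualproxspan} there is some $i_0$ with $W\not\subset x_{\gamma_{i_0}}^0$, hence $x_{\gamma_{i_0}}^+\in W$ by the attracting-dynamics argument; now either $W\not\subset x_{\gamma_{i_0}}^-\oplus x_{\gamma_{i_0}}^0$, giving $x_{\gamma_{i_0}}^-\in W$ as well, or $W\subset x_{\gamma_{i_0}}^-\oplus x_{\gamma_{i_0}}^0$ — but then, since $x_{\gamma_{i_0}}^+\in W$, necessarily $x_{\gamma_{i_0}}^+\subset x_{\gamma_{i_0}}^-\oplus x_{\gamma_{i_0}}^0$, contradicting \ref{item:posgale} taken with $i=j=i_0$ in the degenerate reading, or more honestly contradicting the direct-sum decomposition $V=x_{\gamma_{i_0}}^+\oplus x_{\gamma_{i_0}}^-\oplus x_{\gamma_{i_0}}^0$. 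So indeed $\{x_{\gamma_{i_0}}^+, x_{\gamma_{i_0}}^-\}\subset W$.

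Now propagate: fix $i$ with $x_{\gamma_i}^\pm\subset W$. For any $j\neq i$, I claim $x_{\gamma_j}^+\subset W$ as well. By \ref{item:posgale}, $x_{\gamma_i}^+\not\subset x_{\gamma_j}^+\oplus x_{\gamma_j}^0$ and $x_{\gamma_i}^-\not\subset x_{\gamma_j}^+\oplus x_{\gamma_j}^0$; hence $W$ (which contains both these lines) is not contained in $x_{\gamma_j}^+\oplus x_{\gamma_j}^0$, so applying the attracting dynamics of $\gamma_j^{-1}$ to a vector of $W$ outside $x_{\gamma_j}^+\oplus x_{\gamma_j}^0 = $ (repelling point of $\gamma_j^{-1}$) $\oplus$ (its invariant complement), we get $x_{\gamma_j}^-\subset W$; symmetrically, using that $W\not\subset x_{\gamma_j}^-\oplus x_{\gamma_j}^0$ (again by \ref{item:posgale}), the dynamics of $\gamma_j$ gives $x_{\gamma_j}^+\subset W$. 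Iterating over all $j$, we conclude $x_{\gamma_j}^\pm\subset W$ for every $j=1,\dots,k$, and then \ref{item:proxspan} forces $W=V$. The main obstacle, and the only place care is genuinely needed, is the bookkeeping in the ``propagation'' step: one must verify that containment of $W$ in the relevant hyperplane-type subspace $x_{\gamma_j}^\alpha\oplus x_{\gamma_j}^0$ is obstructed by \ref{item:posgale} applied to the \emph{right} pair of indices and signs, and that the proximal-convergence argument (normalized iterates of a vector not in the ``kernel hyperplane'' converge to the attracting line) is applied to a vector genuinely outside that hyperplane — this is where the three hypotheses interlock, and writing the sign/index quantifiers cleanly is the crux.
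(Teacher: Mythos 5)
Your overall strategy is the same as the paper's: reduce strong irreducibility to irreducibility by passing to powers of the generators (which have the same attracting/repelling lines and invariant complements, so conditions \ref{item:proxspan}--\ref{item:posgale} are inherited), then run proximal dynamics on a nonzero invariant subspace $W$ and use \ref{item:posgale} to propagate fixed points into $W$ until \ref{item:proxspan} forces $W=V$. The reduction and the propagation step are correct.

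However, your intermediate claim --- that there exists $i_0$ with \emph{both} $x_{\gamma_{i_0}}^+$ and $x_{\gamma_{i_0}}^-$ contained in $W$, obtained before any propagation --- is not established by the argument you give. First, $W\not\subset x_{\gamma_{i_0}}^0$ only guarantees that some $w\in W$ has a nonzero component along $x_{\gamma_{i_0}}^+$ \emph{or} along $x_{\gamma_{i_0}}^-$, hence yields $x_{\gamma_{i_0}}^{\alpha}\subset W$ for \emph{some} sign $\alpha$, not specifically $\alpha=+$. Second, your dichotomy has the signs crossed: $W\not\subset x_{\gamma_{i_0}}^-\oplus x_{\gamma_{i_0}}^0$ gives $x_{\gamma_{i_0}}^+\subset W$ (via $\gamma_{i_0}^{n}$), not $x_{\gamma_{i_0}}^-$. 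With the signs corrected, the bad branch of the dichotomy is $W\subset x_{\gamma_{i_0}}^{\alpha}\oplus x_{\gamma_{i_0}}^0$ with $x_{\gamma_{i_0}}^{\alpha}\subset W$, and there is no contradiction with the direct-sum decomposition there --- that configuration is perfectly consistent, so this branch cannot be excluded at this stage. (Condition \ref{item:posgale} is explicitly restricted to $i\neq j$, so it gives you nothing here either.) The repair is simply to reorder the argument as the paper does: from a single line $x_{\gamma_{i_0}}^{\alpha}\subset W$, condition \ref{item:posgale} already launches the propagation, since $x_{\gamma_{i_0}}^{\alpha}\not\subset x_{\gamma_j}^{-\beta}\oplus x_{\gamma_j}^0$ for every $j\neq i_0$ and every $\beta$; this puts $x_{\gamma_j}^{+}$ and $x_{\gamma_j}^{-}$ in $W$ for all $j\neq i_0$, and then propagating back from any such $j$ (this is exactly where $k\geq2$ is used) recovers the missing line $x_{\gamma_{i_0}}^{-\alpha}$. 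Your own propagation paragraph only ever uses one fixed point of the source element, so the flawed preliminary claim is not actually needed.
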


\begin{proof}
 Let us first check that the action of $\Gamma$ on $V$ is irreducible. Consider a non-zero linear subspace $W\subset V$ which is stable under $\Gamma$, and a non-zero vector $w\in W$. Using assumption \ref{item:dualproxspan}, we can find $i$ such that $w\not\in x_{\gamma_i}^0$, and then $\alpha=\pm$ such that $w\not\in x_{\gamma_i}^{-\alpha}\oplus x_{\gamma_i}^0$, so that the sequence $(\gamma_i^{\alpha n}[w])_n$ converges to $x_{\gamma_i}^\alpha$. This means that $x_{\gamma_i}^\alpha\subset W$. Similarly, for any $j\neq i$ and $\beta=\pm$, because $x_{\gamma_i}^\alpha\not\in x_{\gamma_j}^{-\beta}\oplus x_{\gamma_j}^0$ (assumption \ref{item:posgale}), we have $x_{\gamma_j}^\beta\subset W$. Since $k\geq 2$ we deduce that $x_{\gamma_i}^{-\alpha}\subset W$. By assumption \ref{item:proxspan} this means that $W=V$.
 
 Now let $\Gamma_1\subset\Gamma$ be a finite-index subgroup. There exists an integer $N>0$ such that $\Gamma_1$ contains $\Gamma_2:=\langle \gamma_i^N, 1\leq i\leq k\rangle$. The family $\gamma_1^N,\dots,\gamma_k^N$ satisfies conditions \ref{item:proxspan}, \ref{item:dualproxspan} and \ref{item:posgale}, therefore the action of $\Gamma_2$ on $V$ is irreducible, and so is that of $\Gamma_1$: we have proved that $\Gamma$ is strongly irreducible.
\end{proof}

The following can be seen as a converse to Lemma~\ref{lem:critere irred forte}, for groups containing a biproximal element.

\begin{lemma}\label{lem:critere irred forte bis}
 Let $\Gamma\subset\GL(V)$ be a strongly irreducible subgroup that contains a biproximal element $\gamma_1\in\Gamma$. Then there exist biproximal elements $\gamma_2,\dots,\gamma_k\in \Gamma$, with $k\geq 2$, such that the family $\gamma_1,\dots,\gamma_k$ satisfies conditions \ref{item:proxspan},\ref{item:dualproxspan} and \ref{item:posgale} of Lemma~\ref{lem:critere irred forte}.
\end{lemma}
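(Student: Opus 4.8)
The plan is to build the required elements one algebraic condition at a time, using strong irreducibility (via the fact that $\Gamma$ preserves no finite union of proper subspaces, hence in particular no proper subspace) together with the openness of proximality (Remark~\ref{prox_are_open}) to perturb candidate elements into genuine biproximal ones. I would start from the given biproximal $\gamma_1$ and its attracting/repelling data $x_{\gamma_1}^+,x_{\gamma_1}^-,x_{\gamma_1}^0$. The first goal is to produce, by conjugating $\gamma_1$ by suitable elements of $\Gamma$, biproximal elements whose fixed points and invariant hyperplanes are in "general position" with respect to each other and with respect to $\gamma_1$. Concretely, for $g\in\Gamma$ the conjugate $g\gamma_1 g^{-1}$ is biproximal with $x_{g\gamma_1g^{-1}}^{\pm}=g\cdot x_{\gamma_1}^{\pm}$ and $x_{g\gamma_1g^{-1}}^0=g\cdot x_{\gamma_1}^0$. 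So I would argue that, because $\Gamma$ acts irreducibly, the $\Gamma$-orbit of $x_{\gamma_1}^+$ is not contained in any proper projective subspace; hence I can choose finitely many elements $g_1,\dots,g_m\in\Gamma$ so that the points $g_j\cdot x_{\gamma_1}^+$ together span $V$ — this handles condition~\ref{item:proxspan} for the family of conjugates $g_j\gamma_1 g_j^{-1}$ (it already uses only the attracting points, so $x^-$ is free). Dually, since $\Gamma$ acts irreducibly on $V^*$ as well (strong irreducibility passes to the dual), the $\Gamma$-orbit of the hyperplane $x_{\gamma_1}^+\oplus x_{\gamma_1}^0$ is not contained in any proper subspace of $\PR(V^*)$, so finitely many conjugates can be chosen with $\bigcap_j g_j\cdot(x_{\gamma_1}^+\oplus x_{\gamma_1}^0)=0$; intersecting the two finite collections of conjugates (and relabelling) gives a family satisfying both \ref{item:proxspan} and \ref{item:dualproxspan}.

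Next I would secure condition~\ref{item:posgale}, which asks that no attracting/repelling line of one element lie in the invariant hyperplane $x^{\beta}\oplus x^0$ of another. Here the natural move is a further generic perturbation: given the family constructed so far, the "bad" configurations — $g_i\cdot x_{\gamma_1}^\alpha$ lying in $g_j\cdot(x_{\gamma_1}^\beta\oplus x_{\gamma_1}^0)$ — are Zariski-closed conditions, and I want to use irreducibility to move off all of them simultaneously. One clean way: replace each $\gamma_i$ in the family by $h\gamma_i h^{-1}$ for a single well-chosen $h\in\Gamma$, or more robustly, note that since $\Gamma$ acts irreducibly, for each fixed $i\neq j$ the set of $g\in\Gamma$ for which $g\cdot x_{\gamma_1}^\alpha\subset (\text{fixed proper subspace})$ is "small" — it is contained in the preimage of a proper subvariety — and a finite list of such constraints can be avoided; equivalently, one replaces the finitely many $\gamma_i$ by conjugates $g_i'\gamma_i (g_i')^{-1}$ with the $g_i'$ chosen in turn to dodge the finitely many proscribed subspaces coming from the already-fixed elements, while not destroying the span conditions \ref{item:proxspan}, \ref{item:dualproxspan} (which are open). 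Throughout, I would keep in mind that conjugation does not change biproximality, so the output family automatically consists of biproximal elements, and that we may always enlarge the family (taking $k\geq 2$ is free since we can throw in one extra conjugate).

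The main obstacle I anticipate is the bookkeeping in the third step: one must avoid a finite but growing list of "forbidden" incidence conditions \ref{item:posgale} without spoiling the two span conditions, and do so using only the hypothesis of strong irreducibility rather than Zariski-density — so the argument should be phrased in terms of: "no proper subspace is $\Gamma$-invariant, hence no $\Gamma$-orbit of a point (or of a hyperplane) lies in a proper subspace, hence a generic choice of finitely many conjugates avoids any prescribed proper subspace." A careful induction on the number of constraints, adding one conjugate at a time and invoking irreducibility at each stage to place its attracting point outside the finitely many already-specified proper subspaces (and its invariant hyperplane not through the finitely many already-specified points), should close the argument; strong irreducibility (as opposed to mere irreducibility) is what lets us pass freely to the powers $\gamma_i^N$ later in Lemma~\ref{lem:critere irred forte}, and here it is only the irreducibility that is used, which is reassuring. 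Finally I would remark that, by Fact~\ref{densitybis}.\ref{Item : proximalite}, such a starting biproximal $\gamma_1$ exists whenever $\Gamma$ preserves a properly convex $\Omega$, so the hypothesis of the lemma is not vacuous in our applications.
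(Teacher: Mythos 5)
Your overall strategy (inductively adjoining conjugates $g\gamma_1 g^{-1}$ of the given biproximal element so as to dodge finitely many closed incidence conditions) is essentially the paper's, but there is a genuine gap at the decisive step. You need a \emph{single} $g\in\Gamma$ that simultaneously avoids finitely many Zariski-closed conditions (of the form $g(x_{\gamma_1}^\alpha)\subset x_{\gamma_i}^\beta\oplus x_{\gamma_i}^0$, or $x_{\gamma_j}^\beta\subset g(x_{\gamma_1}^\alpha\oplus x_{\gamma_1}^0)$, together with the conditions needed to make progress on \ref{item:proxspan} and \ref{item:dualproxspan}). Irreducibility of the linear action only tells you that \emph{each} such condition fails for \emph{some} $g$, i.e.\ that each bad set is a proper Zariski-closed subset of $\Gamma$ (equivalently, of its Zariski closure). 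That does not imply that their union is proper: if the Zariski closure is reducible as a variety, finitely many proper closed subsets can cover it (take the irreducible components). So the assertion ``a generic choice of conjugating element avoids any prescribed finite list of proper subspaces'' is exactly what needs proof, and avoiding the constraints ``one at a time'' does not produce one $g$ that works for all of them at once.

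The paper closes this gap by passing to $\Gamma_0$, the Zariski-irreducible component of the identity in $\Gamma$, which is a normal finite-index subgroup. Because $\Gamma$ is \emph{strongly} irreducible, $\Gamma_0$ still acts irreducibly on $V$, so each bad set is a proper Zariski-closed subset of the Zariski-irreducible $\Gamma_0$; irreducibility of $\Gamma_0$ as a variety then guarantees that the finite intersection of the complementary non-empty Zariski-open sets is non-empty. This is precisely where strong irreducibility (as opposed to mere irreducibility) enters, so your closing remark that ``here it is only the irreducibility that is used'' is mistaken: with mere irreducibility the finite-index subgroup $\Gamma_0$ could act reducibly and the argument would break. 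A smaller structural difference: the paper does not first arrange \ref{item:proxspan} and \ref{item:dualproxspan} and then repair \ref{item:posgale}; it maintains \ref{item:posgale} at every stage and imposes, with each new conjugate, open conditions forcing strict progress towards \ref{item:proxspan} and \ref{item:dualproxspan}, which makes the termination of the induction transparent. Your order of operations could be made to work, but only after the simultaneous-avoidance step above is justified.
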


\begin{proof}
 We are going to construct elements $\gamma_i$ inductively, taking conjugates of $\gamma:=\gamma_1$.
 
 Let $k\geq 1$ and $\gamma_2,\dots,\gamma_k$ be such that $\gamma_1,\dots,\gamma_k$ satisfy condition \ref{item:posgale}. If $\Span(x_{\gamma_i}^\pm, 1\leq i\leq k)=V$ and $\bigcap_ix_{\gamma_i}^0=0$ then we are done. Otherwise, we are looking for $g\in\Gamma$ such that:
 \begin{itemize}
  \item $x_{g\gamma g^{-1}}^\alpha=g(x_\gamma^\alpha) \not\in x_{\gamma_i}^\beta\oplus x_{\gamma_i}^0$ for all $i\leq k$ and $\alpha,\beta$ in $\{\pm\}$,
  \item $x_{\gamma_j}^\beta\not\in g(x_\gamma^\alpha\oplus x_\gamma^0)$ for all $i\leq k$ and $\alpha,\beta$ in $\{\pm\}$, 
  \item $gx_\gamma^\pm\not\in\Span(x_{\gamma_i}^\alpha, \ i\leq k,\alpha=\pm)$ if $\Span(x_{\gamma_i}^\alpha, \ i\leq k,\alpha=\pm)\neq V$,
  \item and $\bigcap_{i\leq k}x_{\gamma_i}^0\not\subset g(x_\gamma^0)$ if $\bigcap_{i\leq k}x_{\gamma_i}^0\neq 0$.
 \end{itemize}
Denote by $\Gamma_0$ the Zariski-irreducible component of the identity in $\Gamma$ for the Zariski topology (\ie the topology induced by the Zariski topology on $\GL(V)$), which is a normal finite-index subgroup of $\Gamma$ (see \eg \cite[Lem.\,6.21]{benoist2016random}). The action of $\Gamma_0$ on $V$ is also strongly irreducible. Note that each condition above is Zariski-open with respect to $g\in\Gamma_0$, and non-empty since $\Gamma_0$ acts irreducibly on $V$. That $\Gamma_0$ is Zariski-irreducible implies that the (finite) intersection of all conditions is still non-empty; take an element $g$ inside and set $\gamma_{k+1}:=g\gamma g^{-1}$. Note that conditions \ref{item:proxspan} and \ref{item:dualproxspan} ensure that the process will eventually stop.
\end{proof}

\begin{lemma} \label{freesubgroupbis}
 Let $k\geq 2$ and consider biproximal elements $\gamma_1,\dots,\gamma_k\in \PGL(V)$ which satisfy condition \ref{item:posgale} of Lemma~\ref{lem:critere irred forte}. Consider also a family $\{U_i^\alpha: 1\leq i\leq k \text{ and } \alpha=\pm\}$ of disjoint compact neighbourhoods in $\PR(V)$ of the points $\{x_{\gamma_i}^\alpha: 1\leq i\leq k \text{ and } \alpha=\pm\}$. Then for $N\in\N$ large enough, $\gamma_1^N,\dots,\gamma_k^N$ form the basis of a discrete non-abelian free subgroup of $\PGL(V)$, whose elements are biproximal (apart from the identity), and $x_\gamma^+\in U_{i_1}^{\alpha_1}$ and $x_\gamma^-\in U_{i_n}^{-\alpha_n}$ for any non-trivial cyclically reduced word $\gamma=\gamma_{i_1}^{\alpha_1N}\dots \gamma_{i_n}^{\alpha_nN}$, where  $1\leq i_j\leq k$ and $\alpha_j\in\{\pm\}$ for $1\leq j\leq n$, with $n\geq 1$.
\end{lemma}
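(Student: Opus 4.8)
The plan is to prove Lemma~\ref{freesubgroupbis} by a standard ``ping-pong'' argument, with the extra bookkeeping needed to control the attracting and repelling fixed points of the resulting words. First I would fix, for each $i$ and $\alpha$, a compact neighbourhood $B_i^\alpha$ in $\PR(V)$ of the hyperplane $x_{\gamma_i}^{-\alpha}\oplus x_{\gamma_i}^0$, small enough that $B_i^\alpha$ is disjoint from all the $U_j^\beta$ except those with $(j,\beta)=(i,-\alpha)$; condition \ref{item:posgale} is exactly what guarantees that such a choice is possible, since it says $x_{\gamma_j}^\beta\notin x_{\gamma_i}^{-\alpha}\oplus x_{\gamma_i}^0$ whenever $(j,\beta)\neq(i,\pm)$, and $x_{\gamma_i}^{-\alpha}$ lies in that hyperplane but $x_{\gamma_i}^\alpha$ does not. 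The key dynamical input is Remark~\ref{prox_are_open} together with the basic North-South dynamics of a biproximal element: since $\gamma_i$ is proximal with attracting point $x_{\gamma_i}^+$ and repelling hyperplane $x_{\gamma_i}^-\oplus x_{\gamma_i}^0$, for $N$ large the map $\gamma_i^N$ sends the complement of $B_i^+$ into $U_i^+$, and symmetrically $\gamma_i^{-N}$ sends the complement of $B_i^-$ into $U_i^-$. Choosing $N$ large enough to achieve all $2k$ such inclusions simultaneously is the setup.

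The heart of the argument is then the ping-pong table: define, for each pair $(i,\alpha)$, the ``incoming'' region to be the complement of the neighbourhood $B_i^\alpha$ of the repelling hyperplane of $\gamma_i^{\alpha N}$, and observe that $\gamma_i^{\alpha N}$ maps this incoming region inside $U_i^\alpha$. By the disjointness arrangement above, $U_i^\alpha$ is contained in the incoming region of every generator-power except $\gamma_i^{-\alpha N}$ (whose inverse would undo the move). Hence for any non-trivial reduced word $\gamma=\gamma_{i_1}^{\alpha_1 N}\cdots\gamma_{i_n}^{\alpha_n N}$, reading from the right, each successive application lands in the appropriate $U$, and the whole word maps a large region into $U_{i_1}^{\alpha_1}$; this already shows $\gamma\neq\mathrm{id}$, so by the ping-pong lemma the $\gamma_i^N$ freely generate a discrete free subgroup. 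To get biproximality and the precise location of $x_\gamma^\pm$, I would use cyclic reducedness: a cyclically reduced $\gamma$ maps the closure of $\bigcup_{(j,\beta)\neq(i_1,-\alpha_1)}U_j^\beta$, which contains $U_{i_1}^{\alpha_1}$, strictly inside $U_{i_1}^{\alpha_1}$, and this region also avoids the repelling hyperplane of $\gamma$; by a contraction/fixed-point argument (the standard criterion that an element mapping a compact set into its interior, missing a complementary hyperplane, is proximal with attracting point inside that set) $\gamma$ is proximal with $x_\gamma^+\in U_{i_1}^{\alpha_1}$. Applying the same to $\gamma^{-1}=\gamma_{i_n}^{-\alpha_n N}\cdots\gamma_{i_1}^{-\alpha_1 N}$, which is again cyclically reduced and starts with $\gamma_{i_n}^{-\alpha_n N}$, gives $x_{\gamma^{-1}}^+=x_\gamma^-\in U_{i_n}^{-\alpha_n}$, so $\gamma$ is biproximal.

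The main obstacle I expect is not the ping-pong mechanics but the careful verification that a single choice of $N$ makes everything work at once, and that the neighbourhoods $B_i^\alpha$ can genuinely be nested between the $x_{\gamma_i}^{-\alpha}\oplus x_{\gamma_i}^0$ and the complement of the $U_j^\beta$'s; this is where condition \ref{item:posgale} is used in full, and one must be slightly careful that $x_{\gamma_i}^\alpha$ itself is \emph{not} in $B_i^\alpha$ (it is not on the repelling hyperplane since $\gamma_i$ is biproximal) so that the inclusion $\gamma_i^{\alpha N}(\complement B_i^\alpha)\subset U_i^\alpha$ is consistent. A secondary technical point is making precise the ``maps a compact set into its interior, off a complementary hyperplane $\Rightarrow$ proximal'' criterion used to pin down $x_\gamma^\pm$; this is classical (it appears e.g.\ in the work of Benoist and Tits on ping-pong for proximal elements) and I would either cite it or give the short compactness argument. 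Discreteness and freeness are then immediate from the standard ping-pong lemma once the table is set up.
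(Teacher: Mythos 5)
Your proposal is correct, and it takes a genuinely different route from the paper on the key step of proving biproximality. The ping-pong table you set up (using repelling neighbourhoods $B_i^\alpha$ of the hyperplanes $x_{\gamma_i}^{-\alpha}\oplus x_{\gamma_i}^0$) is essentially equivalent to the paper's (which instead fixes one auxiliary compact ``test set'' $U$ disjoint from all the $U_j^\beta$ and all the invariant hyperplanes, and shows each $\gamma_i^{\alpha N}$ maps $U\cup U_i^\alpha\cup\bigcup_{j\neq i}U_j^\beta$ into $U_i^\alpha$). Where you diverge is in showing a cyclically reduced word $\gamma$ is biproximal with $x_\gamma^\pm$ in the right sets. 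The paper does \emph{not} argue at a fixed $N$; it argues by contradiction, taking a sequence $N_n\to\infty$ and ``bad'' cyclically reduced words $\gamma_n$ of type $N_n$, observing that $(\gamma_n)$ and $(\gamma_n^{-1})$ contract $U_i^\alpha$ and $U_j^{-\beta}$ to the corresponding points, and invoking Fact~\ref{contract} (a KAK-type compactness statement: sequences contracting a neighbourhood to a point accumulate at rank-one projectors) together with Remark~\ref{prox_are_open} (openness of proximality) to get a contradiction. Your route instead works at a single large $N$ and appeals directly to the Perron--Frobenius/Krein--Rutman criterion: a transformation mapping a proper compact convex body (disjoint from a hyperplane) into its own interior is proximal with attracting point in that body (Vandergraft-type cone result). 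That criterion is indeed classical and correct, so your argument is valid; arguably it is even slightly cleaner since it avoids passing to a subsequence of $N$'s, but it relies on a nontrivial piece of cone/projective Perron--Frobenius theory that the paper avoids.

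Two small points of care that you should tighten if writing this up. First, the proximality criterion as you state it (``compact set into interior, off a hyperplane $\Rightarrow$ proximal'') is cleanest when the compact set is \emph{convex}; your big union $\overline{\bigcup_{(j,\beta)\neq(i_1,-\alpha_1)}U_j^\beta}$ is not convex, so you should instead apply the criterion to a convex compact neighbourhood $U'\subset U_{i_1}^{\alpha_1}$ of $x_{\gamma_{i_1}}^{\alpha_1}$ (chosen in advance, small enough to be disjoint from some hyperplane), and note that for $N$ large the ping-pong forces $\gamma(U')\subset\gamma(U_{i_1}^{\alpha_1})\subset\mathrm{int}(U')$ uniformly over all cyclically reduced words starting with $\gamma_{i_1}^{\alpha_1 N}$. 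Second, you invoke ``the repelling hyperplane of $\gamma$'' before knowing $\gamma$ is biproximal; what you actually need is only that $U'$ misses \emph{some} hyperplane, which is automatic for a small enough neighbourhood of a single point.
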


To prove this we need a technical fact:

\begin{fait}\label{contract}
 Consider a sequence $(g_n)_n$ in $\PGL(V)$, a point $x$ in $\PR(V)$ and a compact neighbourhood $U$ of $x$ such that the sequence $(g_n(U))_n$ converges to $x$. Then the accumulation points of $(g_n)_n$ in $\PR(\End(V))$ are rank-1 projectors onto $x$ whose kernel does not intersect the interior of $U$.
\end{fait}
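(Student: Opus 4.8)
The plan is to pass to a subsequence along which $(g_n)_n$ converges in $\PR(\End(V))$ — this is possible since $\PR(\End(V))$ is compact — and to show that any such limit $g_\infty$ is a rank-$1$ projector onto $x$ whose kernel misses the interior of $U$. Write $g_\infty$ for the limit (after renaming the subsequence). First I would record the basic dichotomy: for a point $y\in\PR(V)$, either $y\notin\PR(\Ker g_\infty)$, in which case $g_n(y)\to g_\infty(y)$, or $y\in\PR(\Ker g_\infty)$, in which case nothing is claimed directly; and the subset of $\PR(V)$ on which $g_\infty$ is defined (the complement of $\PR(\Ker g_\infty)$) is a dense open set on which convergence $g_n\to g_\infty$ is locally uniform.

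The key step is to exploit the hypothesis that $g_n(U)\to x$. I would argue that the interior $\mathrm{int}(U)$ cannot be entirely contained in $\PR(\Ker g_\infty)$: indeed $\PR(\Ker g_\infty)$ is a proper projective subspace (it is the projectivisation of a proper subspace, since $g_\infty\neq 0$ in $\PR(\End(V))$), hence has empty interior, while $\mathrm{int}(U)$ is non-empty and open. So pick $y\in\mathrm{int}(U)\smallsetminus\PR(\Ker g_\infty)$; then $g_n(y)\to g_\infty(y)$, and since $g_n(y)\in g_n(U)\to x$ we get $g_\infty(y)=x$. More is true: by local uniform convergence on a neighbourhood of $y$ avoiding $\PR(\Ker g_\infty)$, the image $g_\infty$ of that whole neighbourhood must also be the single point $x$ (every point of it is a limit of points in $g_n(U)\to x$). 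A linear map that is constant equal to a line $x$ on a non-empty open subset of its domain of definition must have rank exactly $1$ with image $x$; this forces $\Ker g_\infty$ to be a hyperplane and $g_\infty$ to be, up to scalar, a projector onto $x$ — note $x\notin\PR(\Ker g_\infty)$ automatically, so $g_\infty$ is genuinely a (rank-$1$) projector, i.e. $g_\infty^2$ is proportional to $g_\infty$.

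It remains to see that $\PR(\Ker g_\infty)$ does not meet $\mathrm{int}(U)$. Suppose for contradiction some $z\in\mathrm{int}(U)\cap\PR(\Ker g_\infty)$. Choose a small segment (or any one-parameter family) in $\mathrm{int}(U)$ through $z$ transverse to the hyperplane $\PR(\Ker g_\infty)$; along it, pick points $z_n\to z$ with $z_n\notin\PR(\Ker g_\infty)$. The cleanest way to derive a contradiction is to lift to $V$: choose representatives $\hat z_n\to\hat z$ with $\hat z\in\Ker g_\infty$, $\hat z\neq 0$, and lifts $\hat g_n$ of $g_n$ normalised so that $\hat g_n\to\hat g_\infty$ in $\End(V)$ with $\hat g_\infty\neq 0$. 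Then $\hat g_n(\hat z_n)\to\hat g_\infty(\hat z)=0$, so the lines $g_n(z_n)=[\hat g_n(\hat z_n)]$ need not converge to $x=[\mathrm{Im}\,\hat g_\infty]$ — but they must, because $z_n\in U$ and $g_n(U)\to x$. The way out is to renormalise: replace $\hat g_n$ by $t_n\hat g_n$ with $t_n\to\infty$ chosen so that $t_n\hat g_n(\hat z_n)$ stays bounded away from $0$; then $t_n\hat g_n\to\infty$ would be forced unless... — and here one sees the tension, since $(\hat g_n)$ converges. Concretely, pick any complement line to $\Ker g_\infty$ and write $\hat z_n=\hat z+\varepsilon_n\hat w_n$ with $\varepsilon_n\to 0$ and $\hat w_n$ bounded, bounded away from $\Ker g_\infty$; then $\hat g_n(\hat z_n)=\hat g_n(\hat z)+\varepsilon_n\hat g_n(\hat w_n)$. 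For the direction $[\hat g_n(\hat z_n)]$ to converge to $x$ one needs the dominant term to lie (asymptotically) in the image line $\mathrm{Im}\,\hat g_\infty=\R x$; but $\hat g_n(\hat z)\to 0$ while $\varepsilon_n\hat g_n(\hat w_n)\to 0$ as well, and comparing rates one finds $[\hat g_n(\hat z)]$ itself must accumulate on $x$, i.e. on $\mathrm{Im}\,\hat g_\infty$. However, $\hat z\in\Ker g_\infty$ means $\hat g_\infty(\hat z)=0$, and by continuity $\hat g_n(\hat z)$ can accumulate anywhere in the image of the limit of the rescaled maps, which is a subspace transverse to $\R x$ in general — giving the contradiction whenever $\hat z$ does not lie in the (smaller) kernel of that rescaled limit. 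The honest statement is simply: if $z\in\mathrm{int}(U)$ then, picking $z_n\in\mathrm{int}(U)$ with $z_n\to z$ and $z_n\notin\PR(\Ker g_\infty)$ chosen generically, $g_\infty(z_n)\to x$ forces, by the rank-$1$ structure just established, that $[\hat g_\infty(\hat z_n)]\to x$; but $[\hat g_\infty(\hat z_n)]\to[\hat g_\infty(\hat z)]$ is undefined precisely because $\hat z\in\Ker\hat g_\infty$, and choosing the approach direction $\hat w_n$ so that $\hat g_\infty(\hat w_n)\notin\R x$ (possible since $\mathrm{rk}\,\hat g_\infty=1$ means $\hat g_\infty$ is surjective onto $\R x$, so this fails only if every nearby direction maps into $\R x$, i.e. only if a whole neighbourhood of $\hat z$ maps into $\R x$, which would put $z$ in the interior on which $g_\infty\equiv x$, contradicting $z\in\PR(\Ker g_\infty)$ and $x\notin\PR(\Ker g_\infty)$) we contradict $[\hat g_n(\hat z_n)]\to x$.

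The main obstacle is this last step: handling the rates of convergence near $\PR(\Ker g_\infty)$ carefully. I expect the slickest write-up avoids explicit rate estimates and instead argues as follows: the hypothesis $g_n(U)\to x$ passes to any subsequential limit in the sense that $g_\infty(\mathrm{int}(U)\smallsetminus\PR(\Ker g_\infty))=\{x\}$; a rank-$1$ projector onto $x$ whose kernel-hyperplane $\PR(\Ker g_\infty)$ cut through $\mathrm{int}(U)$ would have $g_\infty$ taking the value $x$ on both open half-neighbourhoods separated by $\PR(\Ker g_\infty)$ inside $\mathrm{int}(U)$ — which is fine and gives no contradiction by itself — so the actual contradiction must come from convergence $g_n\to g_\infty$ being only locally uniform \emph{away} from $\PR(\Ker g_\infty)$: near a point $z\in\mathrm{int}(U)\cap\PR(\Ker g_\infty)$ the maps $g_n$ need not send small neighbourhoods of $z$ close to $x$, yet they must since such neighbourhoods lie in $U$. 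This is the genuine tension, and the clean formalisation is to take $z_n\in\mathrm{int}(U)$ with $z_n\to z$ and $g_n(z_n)\to x$ (from $g_n(U)\to x$), lift everything to $V$ with $\hat g_n\to\hat g_\infty$, $\hat z_n\to\hat z\in\Ker\hat g_\infty\smallsetminus\{0\}$, normalise so $\|\hat z_n\|=1$, and observe $\hat g_n(\hat z_n)=\hat g_n(\hat z)+\hat g_n(\hat z_n-\hat z)\to 0$ since both terms tend to $0$ (the first because $\hat g_\infty(\hat z)=0$, the second because $\hat z_n-\hat z\to 0$ and $\|\hat g_n\|$ is bounded); but $\hat g_n(\hat z_n)\to 0$ with $[\hat g_n(\hat z_n)]\to x$ is impossible once we normalise the lifts so that $[\hat g_n(\hat z_n)]$ represents $g_n(z_n)$ — unless the convergence $\hat g_n(\hat z_n)\to 0$ is "compensated", which, by boundedness of $\|\hat g_n\|$ and $\|\hat z_n\|=1$, it is not. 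This contradiction shows $\PR(\Ker g_\infty)\cap\mathrm{int}(U)=\emptyset$, completing the proof; since the subsequence was arbitrary, every accumulation point of $(g_n)_n$ has the stated form.
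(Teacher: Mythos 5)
Your first half is sound: after extracting a subsequence $\hat g_n\to\hat g_\infty\neq 0$ in $\End(V)$, the open set $\mathrm{int}(U)\smallsetminus\PR(\Ker\hat g_\infty)$ is non-empty, pointwise convergence there gives $g_\infty\equiv x$ on an open set, and since an open cone spans $V$ this forces $\mathrm{rk}\,\hat g_\infty=1$ with image $\R\hat x$. (Note that ``$x\notin\PR(\Ker\hat g_\infty)$ automatically'' is not automatic at that stage; it only follows once the kernel is known to miss $\mathrm{int}(U)\ni x$.) The genuine gap is in that last step. Your final ``clean formalisation'' rests on the claim that $\hat g_n(\hat z_n)\to 0$ is incompatible with $[\hat g_n(\hat z_n)]\to x$ because the decay is not ``compensated''; this is false. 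A sequence of non-zero vectors tending to $0$ can have any limiting direction: take $\hat g_n=\left(\begin{smallmatrix}1&1/n\\ 0&1/n^2\end{smallmatrix}\right)\to p$, $\hat z=e_2\in\Ker p$; then $\hat g_n(e_2)=(1/n,1/n^2)\to 0$ while $[\hat g_n(e_2)]\to[e_1]=x$. So no contradiction is extracted from a single sequence $z_n\to z$. Your alternative route --- choosing approach directions $\hat w_n$ with $\hat g_\infty(\hat w_n)\notin\R\hat x$ --- is vacuous, since you have just proved that $\hat g_\infty$ has image exactly $\R\hat x$, so \emph{every} vector maps into $\R\hat x$; and the ``comparing rates'' passage is hedged (``in general'', ``whenever'') precisely because it does not always apply. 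As written, the key assertion $\PR(\Ker\hat g_\infty)\cap\mathrm{int}(U)=\emptyset$ is not proved.

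The fix requires producing specific points of $U$ near $z$ whose $g_n$-images stay \emph{away} from $x$. For instance, write $V=\R\hat x\oplus W$, pick $\hat w$ with $\hat g_\infty(\hat w)=\hat x$, and set $\hat z_n=\hat z-c_n\hat w$ where $c_n$ is chosen to cancel the $\R\hat x$-component of $\hat g_n(\hat z)$; then $c_n\to 0$, so $[\hat z_n]\in U$ eventually, while $\hat g_n(\hat z_n)$ is a non-zero vector of $W$ (it cannot vanish since $g_n$ is invertible), so $g_n([\hat z_n])$ lies in the closed set $\PR(W)\not\ni x$ --- contradiction. Equivalently one can run an intermediate-value argument along a small segment through $z$ transverse to $\PR(\Ker\hat g_\infty)$, on which the $\hat x$-component of $\hat g_n$ changes sign. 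The paper avoids this delicacy altogether by writing $g_n=[k_na_n\ell_n]$ in a Cartan decomposition: the diagonal factors $a_n$ all preserve the \emph{fixed} hyperplane $\Ker(p)$, so any point of $\ell(U)$ lying in $\Ker(p)$ has all its images trapped in $\PR(\Ker(p))$, which is closed and misses $x$ --- no rate comparison is needed. Your coordinate-free route is viable, but only with the cancellation (or intermediate-value) step supplied.
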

 
\begin{proof} 
 Recall that $V=\R^{d+1}$, so that we can use the usual Cartan decomposition in $\GL(V)$: for any $n$ we can write $g_n=[k_na_nl_n]$, where the elements $k_n$ and $\ell_n$ are in the (maximal compact) classical orthogonal subgroup $K=\mathrm{O}(d+1)$ of $\GL_{d+1}(\R)$, and $a_n$ is diagonal with positive non-increasing entries. Up to passing to a subsequence we may assume that $(k_n)_n$ and $(\ell_n)_n$ converge in $K$ with respective limits $k$ and $\ell$. That $(g_n)_n$ contracts an open set to a point implies that $[a_n]_n$ must converge to $[p]$, where $p$ is the projector onto the line spanned by the first vector $e_1$ of the canonical basis. Assume without loss of generality that $(a_n)_n$ converges to $p$, so that $(k_na_n\ell_n)_n$ converges to $kp \ell$, a rank-$1$ matrix with image $k(e_1)$ and kernel $\ell^{-1}\Ker(p)$. The assumption that $(k_na_nl_n(U))_n$ tends to $x$ implies that $(a_nl(U))_n$ goes to $k^{-1}(x)$, which in turn implies that $\ell(U)\cap \Ker(p)=\emptyset$ and $k^{-1}x=e_1$.
\end{proof}

\begin{proof}[Proof of Lemma~\ref{freesubgroupbis}] 
 Let $U$ be a compact subset of $\PR(V)$ with non-empty interior, disjoint from the $U_i^\alpha$'s and the $x_{\gamma_i}^\alpha\oplus x_{\gamma_i}^0$'s. If $N$ is large enough then for any $i=1,\dots,k$ and $\alpha=\pm$,
 \[\gamma_i^{\alpha N}(U\cup U_i^\alpha\cup \bigcup_{\substack{ j\neq i\\ \beta=\pm}}U_j^\beta)\subset U_i^\alpha.\]
 Then $\Gamma'=\langle \gamma_i^N, \ 1\leq i\leq k\rangle$ is free and discrete thanks to a ping-pong argument. Indeed one can prove by induction on word length that for any non-trivial reduced word $\gamma_{i_1}^{\alpha_1N}\dots \gamma_{i_p}^{\alpha_pN}\in\Gamma'$:
 \[\gamma_{i_1}^{\alpha_1N}\dots \gamma_{i_p}^{\alpha_pN}(U\cup U_{i_p}^{\alpha_p}\cup \bigcup_{\substack{j\neq i_p\\ \beta=\pm}}U_j^\beta)\subset U_{i_1}^{\alpha_1}.\]
 We conclude with a proof by contradiction. Assume that there exists a sequence of cyclically reduced words
 \[\gamma_n=\gamma_{i^{(n)}_1}^{\alpha_1^{(n)}N_n}\gamma_{i_2^{(n)}}^{\alpha_2^{(n)}N_n}\dots \gamma_{i_{p_n}^{(n)}}^{\alpha_{p_n}^{(n)}N_n}\]
 with $(N_n)_n$ going to infinity, which are not biproximal with attracting/repelling pair in $U_{i^{(n)}_1}^{\alpha^{(n)}_1}\times U_{i^{(n)}_{p_n}}^{-\alpha^{(n)}_n}$. Up to extracting assume that $i_1^{(n)}=i$, $i_{p_n}^{(n)}=j$, $\alpha_1^{(n)}=\alpha$ and $\alpha_{p_n}^{(n)}=\beta$ do not depend on $n$.
 
 The sequences $(\gamma_n)_n$ and $(\gamma_n^{-1})_n$ respectively contract $U_i^\alpha$ towards $x_{\gamma_i}^\alpha$ and $U_j^{-\beta}$ towards $x_{\gamma_j}^{-\beta}$, so we can apply Fact~\ref{contract} to them. Up to passing to a subsequence we can assume that $(\gamma_n)_n$ and $(\gamma_n^{-1})_n$ converge to rank-1 projectors on respectively $x_{\gamma_i}^\alpha$ and on $x_{\gamma_j}^{-\beta}$. By Remark~\ref{prox_are_open}, the elements $\gamma_n$ and $\gamma_n^{-1}$ are proximal with attracting fixed points respectively in $U_{i}^{\alpha}$ and $U_j^{-\beta}$: this is a contradiction.
\end{proof}

\subsection{Proof of Proposition~\ref{local_non_arithmeticity}}

Up to taking a finite-index subgroup we can assume that $\Gamma\subset\PSL(V)$. Consider its preimage $\widetilde{\Gamma}\subset\SL(V)$. Let $\widetilde{U}\subset T^1\Omega$ be the preimage of $U$. 

By Fact~\ref{densitybis}, there exists a biproximal element $\gamma_1\in\widetilde\Gamma$ whose axis meets $\widetilde{U}$. By Lemma~\ref{lem:critere irred forte bis} we can find biproximal elements $\gamma_2,\dots,\gamma_k\in\widetilde\Gamma$ such that the family $\gamma_1,\dots,\gamma_k$ satisfies conditions \ref{item:proxspan}, \ref{item:dualproxspan} and \ref{item:posgale} of Lemma~\ref{lem:critere irred forte}. 

By Lemma~\ref{freesubgroupbis}, we can find $N\geq 1$ such that $\gamma_1^N,\dots,\gamma_k^N$ generate a non-abelian free subgroup $\Gamma'$ of $\widetilde\Gamma$ whose elements are biproximal, and such that the axis of every element of the form $\gamma_1^N\gamma_{i_1}^N\cdots\gamma_{i_n}^N\gamma_1^N$ meets $\widetilde U$, where $n\geq 0$ and $1\leq i_1,\dots,i_n\leq k$. Moreover, $\Gamma'$ is strongly irreducible by Lemma~\ref{lem:critere irred forte}.

Let $\Gamma^+\subset\Gamma'$ be the sub-semi-group generated by $\gamma_1^N,\gamma_1^N\gamma_2^N\gamma_1^N,\dots,\gamma_1^N\gamma_k^N\gamma_1^N$; it generates $\Gamma'$ as a group. By Fact~\ref{zclosuresemisimple}, the Zariski-closure of $\Gamma^+$ (which is also that of $\Gamma'$) is semi-simple and non-compact. By Corollary~\ref{densitytranslationlengthsbis}, the additive group generated by $\{\ell(\gamma): \gamma\in\Gamma^+\}$ is dense in $\R$. By construction, each non-trivial element $\gamma$ of $\Gamma^+$ is biproximal with axis \emph{through $\widetilde{U}$}; by definition this axis projects on a biproximal periodic geodesic \emph{through $U$}, whose length is $\ell(\gamma)$.

\section{Strong stable manifolds}\label{Strong stable manifolds}

The \emph{strong stable manifold} of a vector $v\in T^1\Omega$ is a classical notion in the theory of dynamical systems; it is defined as the set of vectors $w\in T^1\Omega$ such that $d_{T^1\Omega}(\phi_t v,\phi_tw)$ goes to zero as $t$ goes to infinity. The goal of this section is to establish the following geometric description of the strong stable manifolds centred at smooth points.

\begin{prop}\label{strongstablemanifolds}
Let $\Omega\subset\PR(V)$ be a properly convex open set, let $v\in T^1\Omega$ and let $\xi:=\phi_\infty v\in\partial\Omega$. Then
\begin{enumerate}
 \item \label{Item : varstabfaible} for any $w\in T^1\Omega$ such that $\phi_\infty w=\xi$, the function $t\mapsto d_{T^1\Omega}(\phi_tv,\phi_tw)$ is non-increasing;
 \item \label{Item : variete stable} suppose $\xi$ is smooth. Then for any $w\in T^1\Omega$ with $\phi_\infty w=\xi$, there is a unique $t_0\in\R$ for which the lines $\phi_{-\infty} v\oplus\phi_{-\infty}w$ and $\pi v\oplus\pi\phi_{t_0} w$ intersect at a point of $T_\xi\partial\Omega$ (see Figure~\ref{figure_strongstablemanifold}); moreover $t_0$ is the only number for which $v$ and $\phi_{t_0}w$ are on the same strong stable manifold, \ie
 \[d_{T^1\Omega}(\phi_tv,\phi_{t+t_0}w)\underset{t\to\infty}{\longrightarrow}0.\]
\end{enumerate}
\end{prop}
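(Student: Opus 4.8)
The plan is to work entirely in an affine chart containing $\overline\Omega$ and to exploit the Hilbert-metric comparison from Remark~\ref{Hilbert VS Euclidean}. For part \ref{Item : varstabfaible}, first I would reduce the claim to a statement about the Hilbert distance between the footpoints $\pi\phi_tv$ and $\pi\phi_tw$, rather than the $d_{T^1\Omega}$-distance. The key geometric observation is that the two straight geodesics issued from $v$ and $w$ both accumulate at the common endpoint $\xi$, so along the way the cross-ratio defining $d_\Omega(\pi\phi_tv,\pi\phi_tw)$ is computed using four aligned points lying on the projective line through $\pi\phi_tv$ and $\pi\phi_tw$; as $t$ grows, this line pivots around $\xi$ and its two intersection points with $\partial\Omega$ move, one of them tending to $\xi$. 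A convexity argument (the chord from $\pi\phi_tv$ towards $\xi$ stays inside $\Omega$, so the relevant ``outer'' boundary points only get more spread out relative to $\pi\phi_tv,\pi\phi_tw$) shows the cross-ratio is non-increasing in $t$. I would make this precise by parametrising both geodesics and differentiating, or more cleanly by a direct monotonicity estimate on cross-ratios of nested configurations; then passing from $d_\Omega$ of footpoints to $d_{T^1\Omega}$ via \eqref{eq:dT1Om} is immediate since the max over $[t,t+1]$ of a non-increasing function is non-increasing.

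For part \ref{Item : variete stable}, the smoothness of $\xi$ is what pins down the correct time shift $t_0$. First I would establish existence and uniqueness of $t_0$ purely projectively: the line $\ell_w^{s}:=\pi v\oplus\pi\phi_s w$ varies continuously and monotonically with $s$, and its intersection with the fixed line $\phi_{-\infty}v\oplus\phi_{-\infty}w$ traces out a segment; requiring this intersection point to lie on the supporting hyperplane $T_\xi\partial\Omega$ (which exists and is unique precisely because $\xi$ is smooth) cuts out exactly one value $s=t_0$, by an intermediate-value/monotonicity argument together with the fact that $T_\xi\partial\Omega$ does not meet $\Omega$. Once $t_0$ is identified, I would show $d_{T^1\Omega}(\phi_tv,\phi_{t+t_0}w)\to 0$: after the shift, the two geodesics share the endpoint $\xi$ \emph{and} the auxiliary alignment condition forces the four points $a_t,\pi\phi_tv,\pi\phi_{t+t_0}w,b_t$ entering the cross-ratio to be positioned so that the cross-ratio tends to $1$. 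Concretely, as $t\to\infty$ both footpoints converge to $\xi$, and because $\xi$ is smooth the boundary near $\xi$ is squeezed between $\Omega$ and the hyperplane $T_\xi\partial\Omega$; the condition that $\pi v\oplus\pi\phi_{t_0}w$ meets $\phi_{-\infty}v\oplus\phi_{-\infty}w$ on $T_\xi\partial\Omega$ is exactly what guarantees, via Remark~\ref{Hilbert VS Euclidean}, that the Euclidean distance $|\pi\phi_tv-\pi\phi_{t+t_0}w|$ shrinks faster than the ``diameter of $\Omega$ as seen from $\pi\phi_tv$''. For the uniqueness of $t_0$ as the shift making $v,\phi_{t_0}w$ strong-stably equivalent, I would use part \ref{Item : varstabfaible}: the function $t\mapsto d_{T^1\Omega}(\phi_tv,\phi_{t+s}w)$ is non-increasing for every $s$ (same common endpoint), hence has a limit $L(s)\geq 0$; a short argument shows $s\mapsto L(s)$ is strictly convex-like — in particular it cannot vanish at two distinct points — so $t_0$ is the unique zero.

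The main obstacle I anticipate is the quantitative decay estimate $d_{T^1\Omega}(\phi_tv,\phi_{t+t_0}w)\to 0$: the non-strict convexity of $\Omega$ means the boundary near $\xi$ may be very flat, so one cannot simply invoke strict convexity to get the cross-ratio to converge to $1$. The resolution is that smoothness of $\xi$ (uniqueness of the supporting hyperplane $T_\xi\partial\Omega$) is exactly the hypothesis that makes the comparison work: writing things in an affine chart where $T_\xi\partial\Omega$ is ``at infinity'' in one direction, the configuration of the two geodesics approaching $\xi$ with the prescribed alignment on $T_\xi\partial\Omega$ forces the transverse separation of the two footpoints to decay, and Remark~\ref{Hilbert VS Euclidean} converts this into decay of the Hilbert distance. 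I would also need to handle the extension from footpoints to the full $d_{T^1\Omega}$ (a supremum over a unit time interval), but part \ref{Item : varstabfaible} makes that automatic since the footpoint distance is already monotone and tending to $0$.
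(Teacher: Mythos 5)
Your overall architecture (reduce to the footpoint distance via \eqref{eq:dT1Om}, prove monotonicity of $t\mapsto d_\Omega(\pi\phi_tv,\pi\phi_tw)$, then use smoothness of $\xi$ to force the limit to vanish after the right time shift) matches the paper's, and your handling of the existence and uniqueness of $t_0$ and of the uniqueness of the strong-stable shift is adequate. But the two analytic cores of the plan have genuine gaps. For part~\ref{Item : varstabfaible}, the monotonicity is precisely the hard step, and ``the outer boundary points only get more spread out'' is not an argument; recall that $t\mapsto d_\Omega(c_1(t),c_2(t))$ is \emph{not} convex for general pairs of geodesics in a Hilbert geometry, so the common-endpoint hypothesis must enter through a concrete mechanism. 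Your geometric picture is also off: the chord through $\pi\phi_tv$ and $\pi\phi_tw$ never contains $\xi$ and does not pivot around it; it pivots around the point $a=(\pi v\oplus\pi w)\cap(\phi_{-\infty}v\oplus\phi_{-\infty}w)$, because the perspectivity centred at $a$ between the two geodesic lines preserves cross-ratios and therefore matches up equal-time points. Establishing that both intersection points of this pivoting chord with $\partial\Omega$ move ``outward'' in the pencil at $\xi$ requires a case analysis (whether $a\oplus\xi$ meets $\Omega$, whether $\phi_{-\infty}v=\phi_{-\infty}w$, etc.) that you have not supplied. The paper sidesteps all of this by deducing part~\ref{Item : varstabfaible} from Crampon's Lemma~\ref{crampon}, applied to geodesics through a common interior point $x_n\to\xi$ (so that the second term $d_\Omega(c_1(T),c_2(T))$ vanishes) followed by a limit; it is telling that an entire appendix is devoted to repairing a gap in the published proof of that lemma, so this step cannot be waved through.

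For part~\ref{Item : variete stable}, Remark~\ref{Hilbert VS Euclidean} is the wrong tool: the inclusion $\overline{B}_\Omega(x,r)\subset(1-e^{-2r})(\overline{\Omega}-x)+x$ bounds Hilbert balls from above by Euclidean sets, i.e.\ it converts Euclidean separation into a \emph{lower} bound on $d_\Omega$; it cannot yield the upper bound $d_\Omega(\pi\phi_tv,\pi\phi_{t+t_0}w)\to0$ that you need. The correct mechanism (the paper's Lemma~\ref{ctrlvarstab}) is an exact formula: since all the chords $\pi\phi_tv\oplus\pi\phi_{t+t_0}w$ pass through the fixed point $a$, the distance $d_\Omega(\pi\phi_tv,\pi\phi_{t+t_0}w)$ equals half the logarithm of the cross-ratio of four lines through $\xi$ in the plane $P$ spanned by the configuration, two of them fixed and the outer two being secants that converge to the one-sided tangents of $P\cap\partial\Omega$ at $\xi$. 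The alignment condition says exactly that $a\in T_\xi\partial\Omega$, so $a\oplus\xi$ misses $\Omega$ and the computation applies; smoothness of $\xi$ makes the two tangents coincide, so the cross-ratio tends to $1$. To complete your proposal you would need to replace the appeal to Remark~\ref{Hilbert VS Euclidean} by this cross-ratio computation, or by a correctly stated converse comparison between Hilbert and Euclidean distances near a smooth boundary point.
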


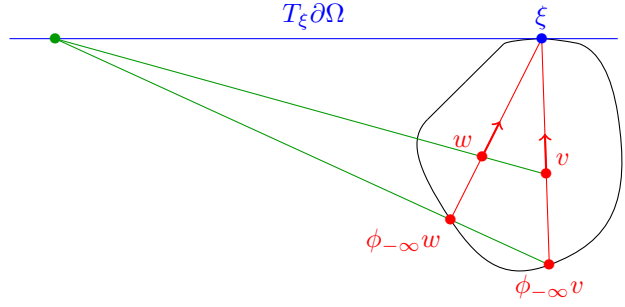
\begin{figure}
\centering
\begin{tikzpicture}
 \coordinate (xi) at (0,0);
 \coordinate (h) at (-7,0);
 \coordinate (h') at (1,0);
 \coordinate (phiv) at (0.1,-3);
 \coordinate (phiw) at (-1.2,-2.4);
 \coordinate (v) at ($(xi)!0.6!(phiv)$);
 \coordinate (v') at ($(v)!0.3!(xi)$);
 \coordinate (x) at (intersection of xi--h and phiv--phiw);
 \coordinate (w) at (intersection of xi--phiw and v--x);
 \coordinate (w') at ($(w)!0.3!(xi)$);
 \coordinate (A) at (-0.5,-0.07);
 \coordinate (B) at (-1,-0.57);
 \coordinate (C) at (-1.5,-1.07);
 \coordinate (D) at (1,-1);
 \coordinate (E) at (0.5,-0.07);
 
 \length{A,B,C,phiw,phiv,D,E,xi}
 \cvx{A,B,C,phiw,phiv,D,E,xi}{1.5}
 
 \draw [blue] (h')--(h) node[midway,above,blue] {$T_\xi\partial\Omega$};
 \draw [red] (phiv)--(xi);
 \draw [red] (phiw)--(xi);
 \draw [red,thick,->] (v)--(v');
 \draw [red,thick,->] (w)--(w');
 \draw [green!60!black] (phiv)--(x);
 \draw [green!60!black] (v)--(x);
 \draw (x) node[green!60!black]{$\bullet$};
 \draw (xi) node[blue]{$\bullet$} node[above,blue]{$\xi$};
 \draw (v) node[red]{$\bullet$} node[above right,red]{$v$};
 \draw (w) node[red]{$\bullet$} node[above left,red]{$w$};
 \draw (phiv) node[red]{$\bullet$} node[below,red]{$\phi_{-\infty}v$};
 \draw (phiw) node[red]{$\bullet$} node[below left,red]{$\phi_{-\infty}w$};
\end{tikzpicture}
\caption{Vectors $v,w\in T^1\Omega$ in the same strong stable manifold}\label{figure_strongstablemanifold}
\end{figure}

\subsection{Crampon's Lemma}

For a general properly convex open set $\Omega\subset\PR(V)$, it is not true that the distance function is convex, in the sense that $t\mapsto d_\Omega(c_1(t),c_2(t))$ is convex for all geodesics $c_1,c_2$ (see \cite{socie-methou04}). However, one can establish a weaker property, and this is the subject of the next lemma. Observe that it implies in particular the first part of Proposition~\ref{strongstablemanifolds}. We will give a proof of Lemma~\ref{crampon} in the appendix, to clarify a missing detail in Crampon's original proof.

\begin{lemma}[\!{\!\cite[Lem.\,8.3]{crampon}}]\label{crampon}
Let $\Omega$ be a properly convex open subset of $\PR(V)$. Let $c_1$ and $c_2$ be two straight geodesics parametrised with constant speed, but not necessarily with the same speed. Then for all $0\leq t\leq T$,
\[d_{\Omega}(c_1(t),c_2(t))\leq d_\Omega(c_1(0),c_2(0))+d_\Omega(c_1(T),c_2(T)).\]
\end{lemma}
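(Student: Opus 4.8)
\textbf{Proof plan for Lemma~\ref{crampon}.}
The plan is to reduce the statement to a one-dimensional convexity-type inequality for the Hilbert metric restricted to the ``strip'' swept out between the two geodesics, by working in a well-chosen affine chart. First I would fix an affine chart containing $\overline{\Omega}$ and pick coordinates so that the two projective lines carrying $c_1$ and $c_2$ become affine lines; after an affine reparametrisation (allowed since we only require constant speed, not unit speed), I can assume $c_1(s)$ and $c_2(s)$ trace these lines affinely in $s$, so that for each $s$ the point $c_2(s)$ lies on the affine segment joining $c_1(s)$ to a fixed auxiliary direction. The key geometric idea, which is Crampon's, is to estimate $d_\Omega(c_1(s),c_2(s))$ from above by the Hilbert distance in a smaller convex set, namely the intersection of $\overline{\Omega}$ with a two-dimensional plane or, better, a triangle/simplex spanned by the relevant boundary points; since the Hilbert metric is monotone under inclusion of convex sets, this only increases the distance, and in a simplex the Hilbert metric is that of a normed space (hexagonal norm), where the required inequality is a straightforward consequence of the triangle inequality and of convexity of norms along affine paths.

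The key steps, in order, would be: (1) set up the affine chart and the affine parametrisations of $c_1,c_2$, recording that $c_i(t)$ for $t\in[0,T]$ stays in a compact part of $\Omega$; (2) for a fixed $t\in[0,T]$, write the cross-ratio defining $d_\Omega(c_1(t),c_2(t))$ using the two boundary points $a_t,b_t\in\partial\Omega$ aligned with $c_1(t),c_2(t)$, and compare with the boundary points one gets at times $0$ and $T$; (3) exploit convexity of $\Omega$: the segment $[a_0,a_T]$ (resp.\ $[b_0,b_T]$) lies in $\overline{\Omega}$, and the chord through $c_1(t),c_2(t)$ exits $\overline{\Omega}$ through points that are ``no closer'' to the geodesics than the $a_t,b_t$ coming from these segments, which gives a bound on $[a_t,c_1(t),c_2(t),b_t]$ by a product/expression involving the analogous cross-ratios at $0$ and $T$; (4) convert this cross-ratio inequality, via $d_\Omega=\tfrac12\log[\,\cdot\,]$, into the additive inequality in the statement. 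The cleanest way to organise (3)--(4) is to reduce to the plane $P$ spanned by the four points $c_1(0),c_2(0),c_1(T),c_2(T)$ and even to the quadrilateral (or triangle) they span inside $\overline\Omega\cap P$, where everything is an explicit computation with cross-ratios of collinear points in an affine plane; monotonicity of the Hilbert metric under the inclusion of this quadrilateral into $\Omega\cap P$ into $\Omega$ handles the passage back.

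The main obstacle — and the ``missing detail'' that the paper says it is filling in — is step (3): one must be careful that the two geodesics need not be coplanar with each other in a way that makes the naive picture work, and that the boundary points $a_t,b_t$ realising the Hilbert distance at an intermediate time are genuinely controlled by those at times $0$ and $T$. I would handle this by projecting everything from the direction transverse to the plane $P$ onto $P$ (a projective map, hence preserving cross-ratios along lines in $P$), checking that this projection maps $\overline{\Omega}$ into a properly convex set containing the images of $a_0,b_0,a_T,b_T$, and then doing the planar estimate there; the point is that projection can only move the relevant boundary intersections ``outward'', which is the correct direction for the inequality. Once the problem is genuinely planar, the estimate is the standard convexity lemma for the Hilbert metric on a convex quadrilateral, proved by a direct cross-ratio computation, so the remaining work is routine.
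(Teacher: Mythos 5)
Your proposal has two genuine gaps, either of which is fatal as written. First, the ``affine reparametrisation'' in step (1) is not allowed: a straight geodesic parametrised with constant speed for $d_\Omega$ is \emph{not} an affine parametrisation of its chord (the Hilbert distance along a chord is the logarithm of a cross-ratio, so constant speed corresponds to a M\"obius--exponential, not affine, dependence on $t$), and the content of the lemma lies precisely in the time-pairing $t\mapsto(c_1(t),c_2(t))$ determined by these parametrisations; replacing them by affine parametrisations changes the points $c_i(t)$ and hence the statement. Second, the comparison-domain strategy does not close. If $\Delta\subset\Omega$ is a simplex or quadrilateral containing the relevant points, then $d_\Omega\le d_\Delta$ on $\Delta$, so you can bound $d_\Omega(c_1(t),c_2(t))$ above by $d_\Delta(c_1(t),c_2(t))$; but any convexity inequality proved inside $\Delta$ produces $d_\Delta(c_1(0),c_2(0))+d_\Delta(c_1(T),c_2(T))$ on the right-hand side, and these terms are \emph{larger} than the $d_\Omega$-terms you need, with no control on the gap. (Moreover $c_1,c_2$ are not constant-speed for $d_\Delta$, so the normed-space convexity along affine paths would not apply to them in any case.)

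You have also misidentified the ``missing detail''. The paper first reduces to the case $c_1(0)=c_2(0)$ by inserting an auxiliary geodesic $c_3$ with $c_3(0)=c_1(0)$ and $c_3(T)=c_2(T)$ and applying the triangle inequality --- this is how the sum of two terms on the right-hand side arises; in each of the two resulting configurations one proves the stronger monotonicity statement $d_\Omega(c_1(t),c_2(t))\le d_\Omega(c_1(T),c_2(T))$. Once $c_1(0)=c_2(0)$, the two lines span a plane, so coplanarity is automatic and no projection argument is needed. The genuinely delicate point is elsewhere: one must show that the chords $(c_1(t)\oplus c_2(t))$ and $(c_1(T)\oplus c_2(T))$ do not cross inside $\Omega$, so that the cross-ratio comparison between the chord through $c_1(t),c_2(t)$ and the one through $c_1(T),c_2(T)$ goes in the right direction. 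This is a quantitative statement about the constant-speed parametrisations, proved in the paper by comparing ratios of Hilbert distances on nested intervals and checking the monotonicity of the explicit function $a\mapsto\log(1-\tfrac1a)/\log(1-\tfrac za)$. Your plan contains no substitute for this step, and the vague assertion in your step (3) that boundary points at intermediate times are ``no closer'' than those controlled by times $0$ and $T$ is exactly the assertion that requires proof.
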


\subsection{An explicit computation of \texorpdfstring{$\lim_{t\to\infty}d_{T^1\Omega}(\phi_tv,\phi_tw)$}{lim as t to infty of d(phitv,phitw)}}

We now prove a lemma from which the geometric description of strong stable manifolds will be a corollary. See an illustration for the notation in Figure~\ref{figure_ctrlvarstab}.

\begin{lemma}\label{ctrlvarstab}
Let $\Omega\subset \PR(V)$ be a properly convex open set. Take $v,w\in T^1\Omega$ with $\phi_\infty v=\phi_\infty w=:\xi$ and $\phi_{-\infty}v\neq\phi_{-\infty}w$, and let $x:=\pi v$ and $y:=\pi w$. Let $P$ be the projective plane spanned by $x$, $y$ and $\xi$, and let $\mathcal{D},\mathcal{D}'$ be the tangent lines of $P\cap\partial\Omega$ at $\xi$ (they may coincide) such that the lines $\mathcal{D},(\xi \oplus x),(\xi \oplus y),\mathcal{D}'$ lie in this order around $\xi$. Let $\delta\geq0$ be half the logarithm of the cross-ratio of these four lines (with the convention that $\delta=0$ if $\mathcal{D}=\mathcal{D}'$). Let $a$ be the intersection point of $x\oplus y$ and $\phi_{-\infty}v\oplus \phi_{-\infty}w$. If $a\oplus\xi$ does not intersect $\Omega$, then
\[d_\Omega(\pi\phi_tv,\pi\phi_tw) \underset{t\to\infty}{\longrightarrow}\delta \ \text{ and }\ d_{T^1\Omega}(\phi_tv,\phi_tw) \underset{t\to\infty}{\longrightarrow}\delta.\]
\end{lemma}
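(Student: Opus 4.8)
The plan is to work entirely inside the projective plane $P$, where everything is two-dimensional, and to reduce the statement to an explicit cross-ratio computation. First I would set up coordinates in an affine chart of $P$ containing $\overline{P\cap\Omega}$, chosen so that $\xi$ is at the origin and the two tangent lines $\mathcal D,\mathcal D'$ have a convenient form; the hypothesis that $a\oplus\xi$ does not meet $\Omega$ means that, in the chart, the line through $\xi$ and $a$ is a supporting-type line separating the two geodesic rays, so that the picture is genuinely that of Figure~\ref{figure_ctrlvarstab}. The key geometric input is that, since both $v$ and $w$ point towards the common endpoint $\xi$, the straight geodesics $c_1(t)=\pi\phi_tv$ and $c_2(t)=\pi\phi_tw$ both converge to $\xi$, and along the way the chords $\xi\oplus c_1(t)$ and $\xi\oplus c_2(t)$ are \emph{constant} (equal to $\xi\oplus x$ and $\xi\oplus y$ respectively), because a straight geodesic lies on a projective line and $\xi$ is its endpoint. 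So the ``angular'' data $\mathcal D,(\xi\oplus x),(\xi\oplus y),\mathcal D'$ never changes; only the positions of $c_1(t),c_2(t)$ along their rays change.

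Next I would compute $d_\Omega(c_1(t),c_2(t))$ by the Hilbert-metric formula applied to the segment $[c_1(t),c_2(t)]$, whose endpoints on $\partial(P\cap\Omega)$ I call $p_t$ (on the side of $\mathcal D$) and $q_t$ (on the side of $\mathcal D'$); thus $d_\Omega(c_1(t),c_2(t))=\tfrac12\log[p_t,c_1(t),c_2(t),q_t]$. As $t\to\infty$ both $c_1(t)$ and $c_2(t)$ tend to $\xi$, and the segment $[c_1(t),c_2(t)]$ is ``pinched'' towards $\xi$; the crucial point is that, in the limit, the four collinear points $p_t,c_1(t),c_2(t),q_t$ are seen from $\xi$ under the four lines $\mathcal D,(\xi\oplus x),(\xi\oplus y),\mathcal D'$ — because $p_t$ converges to the second intersection of $\mathcal D$-side of $\partial\Omega$ in the infinitesimal cone at $\xi$, i.e. the line $\xi\oplus p_t$ converges to $\mathcal D$, and similarly $\xi\oplus q_t\to\mathcal D'$, while $\xi\oplus c_1(t)=\xi\oplus x$ and $\xi\oplus c_2(t)=\xi\oplus y$ exactly. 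Since cross-ratio of four collinear points equals the cross-ratio of the four lines joining them to any fixed point not on the line, passing to the limit gives $[p_t,c_1(t),c_2(t),q_t]\to[\mathcal D,\xi\oplus x,\xi\oplus y,\mathcal D']$, whose half-log is $\delta$ by definition. The hypothesis that $a\oplus\xi$ misses $\Omega$ is what guarantees the segment $[c_1(t),c_2(t)]$ stays in a fixed half-plane and that the limiting lines $\xi\oplus p_t$, $\xi\oplus q_t$ are exactly $\mathcal D,\mathcal D'$ rather than degenerating to $\xi\oplus x$ or $\xi\oplus y$ from the wrong side; I would make this precise using Remark~\ref{Hilbert VS Euclidean} to control how fast $c_i(t)\to\xi$ and a convexity/monotonicity argument (or Lemma~\ref{crampon}) to show $\xi\oplus p_t$ is monotone and to pin down its limit.

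For the second assertion, $d_{T^1\Omega}(\phi_tv,\phi_tw)\to\delta$, I would use the definition \eqref{eq:dT1Om}: $d_{T^1\Omega}(\phi_tv,\phi_tw)=\max_{0\le s\le1}d_\Omega(\pi\phi_{t+s}v,\pi\phi_{t+s}w)$. By Lemma~\ref{crampon} (or its Proposition~\ref{strongstablemanifolds}.\ref{Item : varstabfaible} consequence) the function $s\mapsto d_\Omega(\pi\phi_sv,\pi\phi_sw)$ is non-increasing, so on $[t,t+1]$ the maximum is attained at $s=0$, giving $d_{T^1\Omega}(\phi_tv,\phi_tw)=d_\Omega(\pi\phi_tv,\pi\phi_tw)$, and the second limit follows from the first. (If one does not want to invoke monotonicity here, the squeeze $d_\Omega(\pi\phi_{t+1}v,\pi\phi_{t+1}w)\le d_{T^1\Omega}(\phi_tv,\phi_tw)\le d_\Omega(\pi\phi_tv,\pi\phi_tw)+d_\Omega(\pi\phi_{t+1}v,\pi\phi_{t+1}w)$ from Lemma~\ref{crampon}, together with the first limit, already forces convergence to $\delta$ once one checks the lower bound also tends to $\delta$ — which it does, being another instance of the first limit.)

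\textbf{Main obstacle.}
The delicate point is the limit identification $\xi\oplus p_t\to\mathcal D$ and $\xi\oplus q_t\to\mathcal D'$: a priori one only knows $p_t,q_t\to\xi$, and the direction in which a boundary point approaches $\xi$ need not converge to the supporting tangent direction unless one uses convexity of $\partial(P\cap\Omega)$ correctly and the separation hypothesis on $a\oplus\xi$. I expect the bulk of the work to be a careful planar convexity argument showing that the chord directions $\xi\oplus p_t$ are monotone in $t$ and squeezed between $\mathcal D$ and $\xi\oplus x$, with limit exactly $\mathcal D$ (and symmetrically on the other side); once that is in place, the cross-ratio computation and the passage to $d_{T^1\Omega}$ are routine.
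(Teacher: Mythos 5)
The global shape of your plan matches the paper's: pass to the plane $P$, express $d_\Omega(x_t,y_t)$ via the chord endpoints, identify the limit of the chord directions with $\mathcal{D},\mathcal{D}'$, then conclude by continuity of cross-ratio. You also correctly supply the easy reduction of the $d_{T^1\Omega}$-limit to the $d_\Omega$-limit via monotonicity (a detail the paper leaves implicit). However, you have correctly flagged the main obstacle but not resolved it, and the missing idea is precisely the one the paper supplies: \emph{the chords $x_t\oplus y_t$ all pass through the fixed point $a$.}

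This collinearity is not a planar-convexity fact; it is an exact projective identity. Because $d_\Omega(x,x_t)=t=d_\Omega(y,y_t)$, the cross-ratios $[\phi_{-\infty}v,x,x_t,\xi]$ and $[\phi_{-\infty}w,y,y_t,\xi]$ are equal, and the central projection from $a$ sends $(\phi_{-\infty}v,x,\xi)\mapsto(\phi_{-\infty}w,y,\xi)$ (the first two by definition of $a$, the last because $\xi\in a\oplus\xi$); since projection from a point preserves cross-ratio, $x_t\mapsto y_t$, i.e.\ $a\in x_t\oplus y_t$. Consequently the chords $x_t\oplus y_t$ form a pencil through $a$ converging to $a\oplus\xi$, which by hypothesis is a supporting line of $\Omega$ at $\xi$; it is \emph{this} that pins down $\xi\oplus b_t\to\mathcal{D}$ and $\xi\oplus c_t\to\mathcal{D}'$ (either $b_t\to\xi$ and one invokes the definition of the one-sided tangent as a limit of secants, or $b_t\to b_\infty\neq\xi$ lies on a flat segment of $\partial\Omega$ inside $a\oplus\xi$ and then $\xi\oplus b_t\to a\oplus\xi=\mathcal{D}$). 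Without this, your proposed ``monotonicity of chord directions'' argument is underspecified: a priori the direction of $x_t\oplus y_t$ depends on the relative Euclidean decay rates of $x_t,y_t$ toward $\xi$, and nothing in the rest of your outline controls that ratio. (Your heuristic that the hypothesis on $a\oplus\xi$ ``keeps the segment in a fixed half-plane'' is not how the hypothesis is used; it is used to make $a\oplus\xi$ a supporting line, so that the pencil of chords through $a$ collapses correctly at $\xi$.) In short: add the cross-ratio/projection-from-$a$ step and the rest of your plan goes through essentially as the paper does it.
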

Observe that, in Lemma~\ref{ctrlvarstab}, we have $\mathcal{D}=\mathcal{D'}$ whenever $P\cap\partial\Omega$ is smooth at $\xi$, in which case $\delta=0$.

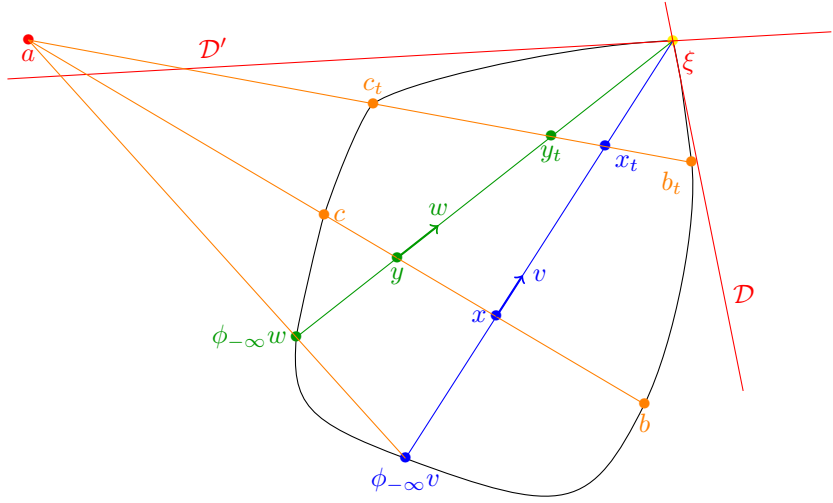
\begin{figure}
\centering
\begin{tikzpicture}[scale=2,rotate=20]
 \coordinate (xi) at (0,0);
 \coordinate (eta1) at (-2.6,-2);
 \coordinate (eta2) at (-3,-1);
 \coordinate (b) at (-1,-2.2);
 \coordinate (bt) at (-0.16,-0.8);
 \coordinate (c) at (-2.55,-0.3);
 \coordinate (a) at (intersection of eta1--eta2 and b--c);
 \coordinate (ct) at ($(a)!0.52!(bt)$);
 \coordinate (x) at (intersection of b--c and xi--eta1);
 \coordinate (xt) at (intersection of bt--ct and xi--eta1);
 \coordinate (y) at (intersection of b--c and xi--eta2);
 \coordinate (yt) at (intersection of bt--ct and xi--eta2);
 \coordinate (A) at (-0.02,-0.14);
 \coordinate (B) at (-0.04,-0.26);
 \coordinate (C) at (-0.2,0.06);
 \coordinate (D) at (-0.1,0.032);
 \coordinate (v) at ($(x)!0.15!(xi)$);
 \coordinate (w) at ($(y)!0.15!(xi)$);
 
 \length{xi,D,C,ct,c,eta2,eta1,b,bt,B,A}
 \cvx{xi,D,C,ct,c,eta2,eta1,b,bt,B,A}{1.8}
 
 \draw (xi) node[yellow]{$\bullet$} node[below right,red]{$\xi$};
 \draw (eta1) node[blue]{$\bullet$} node[below,blue]{$\phi_{-\infty}v$};
 \draw (eta2) node[green!60!black]{$\bullet$} node[left,green!60!black]{$\phi_{-\infty}w$};
 \draw (b) node[orange]{$\bullet$} node[below,orange]{$b$};
 \draw (bt) node[orange]{$\bullet$} node[below left,orange]{$b_t$};
 \draw (c) node[orange]{$\bullet$} node[right,orange]{$c$};
 \draw (ct) node[orange]{$\bullet$} node[above,orange]{$c_t$};
 \draw (a) node[red]{$\bullet$} node[below,red]{$a$};
 \draw [green!60!black] (eta2)--(xi);
 \draw [blue] (eta1)--(xi);
 \draw (y) node[green!60!black]{$\bullet$} node[below,green!60!black]{$y$};
 \draw (x) node[blue]{$\bullet$} node[blue,left]{$x$};
 \draw (yt) node[green!60!black]{$\bullet$} node[green!60!black,below]{$y_t$};
 \draw (xt) node[blue]{$\bullet$} node[blue,below right]{$x_t$};
 \draww{xi}{B}{1}{8}{red}{red,near end,right}{$\mathcal{D}$}
 \draww{xi}{C}{5}{20}{red}{red,near end,above}{$\mathcal{D}'$}
 \draw [orange] (a)--(b);
 \draw [orange] (a)--(bt);
 \draw [orange] (a)--(eta1);
 \draw [blue, thick, ->] (x)--(v) node[right]{$v$};
 \draw [green!60!black, thick, ->] (y)--(w) node[above]{$w$};
\end{tikzpicture}
\caption{Illustration for the notation in Lemma~\ref{ctrlvarstab}}\label{figure_ctrlvarstab}
\end{figure}

\begin{proof}
We consider $x_t=\pi\phi_tv$ and $y_t=\pi\phi_tw$. Since $d_\Omega(x,x_t)=t=d_\Omega(y,y_t)$ and by definition of the cross-ratio, we see that $y_t\in (y\oplus \xi)\cap(a\oplus x_t)$. Let $b_t,c_t\in\partial\Omega$ be such that the four points $b_t,x_t,y_t,c_t$ are aligned in this order. We consider $\mathcal{D}_t=(\xi \oplus b_t)$ and $\mathcal{D}'_t=(\xi \oplus c_t)$. By definition of the tangent lines, the two sequences $(\mathcal{D}_t)_{t\to\infty}$ and $(\mathcal{D}'_t)_{t\to\infty}$ converge respectively to $\mathcal{D}$ and $\mathcal{D}'$. By definition $d_\Omega(x_t,y_t)$ is half the logarithm of the cross-ratio of the four lines $\mathcal{D}_t,(\xi\oplus x),(\xi\oplus y),\mathcal{D}'_t$, which converges to the cross-ratio of the four lines $\mathcal{D},(\xi\oplus x),(\xi\oplus y),\mathcal{D}'$.
\end{proof}

\subsection{Proof of Proposition~\ref{strongstablemanifolds}}
\begin{itemize}
\item[\ref{Item : varstabfaible}.] By definition of $d_{T^1\Omega}$ (see \eqref{eq:dT1Om}), in order to prove that $t\mapsto d_{T^1\Omega}(\phi_tv,\phi_tw)$ is non-increasing, it is enough to prove that $t\mapsto d_\Omega(\pi\phi_tv,\pi\phi_tw)$ is non-increasing. Observe that it will also have as a consequence that $d_{T^1\Omega}(v,w)\leq d_\Omega(\pi v,\pi w)$. We fix $t\geq 0$. Consider a sequence $(x_n)_{n\in\N}\in\Omega^\N$ converging to $\xi$, and for each $n\in\N$, take $v_n\in T_{\pi v}^1\Omega$ and $w_n\in T_{\pi w}^1\Omega$ which define geodesic rays containing $x_n$. Then $\phi_t(v)=\lim_{n\to\infty}\phi_t(v_n)$ and $\phi_t(w)=\lim_{n\to\infty}\phi_t(w_n)$. By Lemma~\ref{crampon},
\[d_\Omega(\pi\phi_tv_n,\pi\phi_{t\frac{d_\Omega(\pi w,x_n)}{d_\Omega(\pi v,x_n)}} w_n)\leq d_\Omega(\pi v,\pi w)\]
for any $n$, and we get the desired inequality by taking the limit, since $(\frac{d_\Omega(\pi w,x_n)}{d_\Omega(\pi v,x_n)})_n$ tends to $1$.

\item[\ref{Item : variete stable}.] If $\xi$ is a smooth point of $\partial\Omega$ and the lines $\phi_{-\infty}v\oplus\phi_{-\infty}w$ and $\pi v\oplus\pi w$ intersect at a point of $T_\xi\partial\Omega$, then the fact that $d_{T^1\Omega}(\phi_tv,\phi_tw)$ goes to zero as $t$ goes to infinity, is an immediate corollary of Lemma~\ref{ctrlvarstab} (note that in this case $\mathcal{D}=\mathcal{D'}$ with the notation of Lemma~\ref{ctrlvarstab}).
\end{itemize}

\subsection{A consequence}\label{Section : une consequence}

The following lemma is a consequence of Proposition~\ref{strongstablemanifolds}.\ref{Item : varstabfaible}. It will be the key ingredient to prove that the biproximal unit tangent bundle is maximal among invariant closed subsets on which the geodesic flow is topologically transitive (Theorem~\ref{mixing}.\ref{item:max de T1Mbip}).

\begin{lemma}\label{Lemme : limite d'orbite}
 Let $\Omega\subset\PR(V)$ be a properly convex open set and $\Gamma\subset\Aut(\Omega)$ a discrete subgroup; set $M=\Omega/\Gamma$. Consider two vectors $v,w\in T^1M$ with $w$ in the closure of the forward orbit $\{\phi_tv:t\geq 0\}$. Then $\phi_\infty\tilde{v}$ belongs to the closure of the orbit $\Gamma\cdot \phi_\infty\tilde{w}$ for all lifts $\tilde{v},\tilde{w}\in T^1\Omega$. 
\end{lemma}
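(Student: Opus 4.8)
The plan is to exploit the non-increasing property of the distance function from Proposition~\ref{strongstablemanifolds}.\ref{Item : varstabfaible} together with a compactness/diagonal argument. First I would fix lifts $\tilde v, \tilde w \in T^1\Omega$, and unfold the hypothesis that $w$ lies in the closure of $\{\phi_t v : t\geq 0\}$: there exist times $t_n \to +\infty$ (one must check the times must go to infinity, or handle the bounded case separately, in which case $w$ is literally on the orbit of $v$ and the conclusion is immediate) and automorphisms $\gamma_n \in \Gamma$ such that $\gamma_n \phi_{t_n}\tilde v \to \tilde w$ in $T^1\Omega$. In particular, $\pi(\gamma_n\phi_{t_n}\tilde v) \to \pi\tilde w$ and the forward endpoints converge: $\phi_\infty(\gamma_n\phi_{t_n}\tilde v) = \gamma_n\phi_\infty\tilde v \to \phi_\infty\tilde w$.

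Next I would try to control $\gamma_n\phi_\infty\tilde v$ versus $\gamma_n(\text{point near }\phi_{-\infty}\tilde v\text{-side})$. Here is the key idea: set $\xi := \phi_\infty\tilde v$ and consider the vector $\tilde w' := \phi_{-s}(\gamma_n\phi_{t_n}\tilde v)$ for suitable $s$; but more to the point, I want to compare $\phi_\infty\tilde v$ with a nearby vector $u$ pointing in a ``generic'' direction at $\pi\tilde v$ that has the same forward endpoint. Actually the cleanest route: pick any $u \in T^1\Omega$ with $\pi u = \pi\tilde v$ and $\phi_\infty u = \xi = \phi_\infty\tilde v$ — for instance $u = \tilde v$ itself, trivially. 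That is too trivial. Instead, the mechanism that makes the statement work is: because $t\mapsto d_{T^1\Omega}(\phi_t \tilde v, \phi_t u)$ is non-increasing whenever $\phi_\infty \tilde v = \phi_\infty u$, any vector sharing the forward endpoint $\xi$ with $\tilde v$ stays at bounded (indeed non-increasing) distance along the flow. So I would take a vector $u$ with footpoint near $\phi_\infty\tilde w$'s ``other side'' — concretely, apply this to the pair $(\phi_{t_n}\tilde v, u_n)$ where $u_n$ is the vector at footpoint $\pi\phi_{t_n}\tilde v$ pointing towards a fixed point $o\in\Omega$; then $d_{T^1\Omega}(\phi_s\phi_{t_n}\tilde v,\phi_s u_n)$ is bounded by $d_{T^1\Omega}(\phi_{t_n}\tilde v, u_n) = d_\Omega(\pi\phi_{t_n}\tilde v, o) \to d_\Omega(\xi\text{-direction})$... which is infinite. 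That does not bound things.

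Let me restructure: the honest mechanism is that $\gamma_n \xi = \gamma_n\phi_\infty\tilde v \to \phi_\infty\tilde w$ is automatic from $\gamma_n\phi_{t_n}\tilde v \to \tilde w$ and the continuity of $\phi_\infty$ on $T^1\Omega$ — wait, $\phi_\infty$ is continuous, so indeed $\phi_\infty(\gamma_n\phi_{t_n}\tilde v) \to \phi_\infty\tilde w$, i.e. $\gamma_n(\phi_\infty\phi_{t_n}\tilde v) \to \phi_\infty\tilde w$. But $\phi_\infty\phi_{t_n}\tilde v = \phi_\infty\tilde v$ since flowing does not change the forward endpoint. Hence $\gamma_n\phi_\infty\tilde v \to \phi_\infty\tilde w$, which says precisely that $\phi_\infty\tilde w \in \overline{\Gamma\cdot\phi_\infty\tilde v}$ — this is the \emph{reverse} of what we want. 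To get $\phi_\infty\tilde v \in \overline{\Gamma\cdot\phi_\infty\tilde w}$ we instead look at $\gamma_n^{-1}$: from $\gamma_n\phi_{t_n}\tilde v \to \tilde w$ we get $\phi_{t_n}\tilde v \to$ nothing directly, but $\gamma_n^{-1}$ applied: we'd want $\gamma_n^{-1}\tilde w \approx \phi_{t_n}\tilde v$, so $\phi_\infty(\gamma_n^{-1}\tilde w) \approx \phi_\infty\phi_{t_n}\tilde v = \phi_\infty\tilde v$, giving $\gamma_n^{-1}\phi_\infty\tilde w \to \phi_\infty\tilde v$, i.e. $\phi_\infty\tilde v\in\overline{\Gamma\cdot\phi_\infty\tilde w}$. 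But the convergence $\gamma_n^{-1}\tilde w \to \phi_{t_n}\tilde v$ is not literal — the right-hand side moves with $n$. The fix is to use Proposition~\ref{strongstablemanifolds}.\ref{Item : varstabfaible}: $d_{T^1\Omega}(\gamma_n^{-1}\phi_{-t_n}\tilde w, \tilde v)$... here $\phi_\infty(\gamma_n^{-1}\phi_{-t_n}\tilde w)$ and $\phi_\infty\tilde v$ — one shows these become close because $d_{T^1\Omega}(\gamma_n^{-1}\tilde w, \phi_{t_n}\tilde v) \to 0$ (apply $\gamma_n^{-1}$ to $\gamma_n\phi_{t_n}\tilde v \to \tilde w$, using that $\Gamma$ acts by isometries on $T^1\Omega$), hence flowing backward by $t_n$ and using that backward flow is $1$-Lipschitz in a suitable sense — or more simply, since $d_{T^1\Omega}(\gamma_n^{-1}\tilde w,\phi_{t_n}\tilde v)\to 0$, the footpoints converge and so do the forward endpoints: $\phi_\infty(\gamma_n^{-1}\tilde w) = \gamma_n^{-1}\phi_\infty\tilde w \to \phi_\infty\tilde v$ by continuity of $\phi_\infty$ and the fact that a convergent-to-$0$ $d_{T^1\Omega}$-distance forces $\phi_\infty$-convergence (since $\phi_\infty$ is continuous on $T^1\Omega$ and $\gamma_n^{-1}\tilde w, \phi_{t_n}\tilde v$ have the same $\phi_\infty$ up to the vanishing error — more carefully, pick a subsequence along which $\phi_{t_n}\tilde v$ converges in $\overline{T^1\Omega}$-type compactification, but cleanly: $d_{T^1\Omega}(\gamma_n^{-1}\tilde w,\phi_{t_n}\tilde v)\to 0$ plus $\phi_\infty(\phi_{t_n}\tilde v)=\phi_\infty\tilde v$ fixed gives $\phi_\infty(\gamma_n^{-1}\tilde w)\to\phi_\infty\tilde v$ by joint continuity).

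So the clean proof is: since $\Gamma$ acts on $T^1\Omega$ by isometries of $d_{T^1\Omega}$, from $\gamma_n\phi_{t_n}\tilde v\to\tilde w$ we get $d_{T^1\Omega}(\phi_{t_n}\tilde v,\gamma_n^{-1}\tilde w)=d_{T^1\Omega}(\gamma_n\phi_{t_n}\tilde v,\tilde w)\to0$. The map $\phi_\infty: T^1\Omega\to\partial\Omega$ is continuous, and $\phi_\infty(\phi_{t_n}\tilde v)=\phi_\infty\tilde v$ for all $n$ since flowing along a straight geodesic does not change its forward endpoint; combined with $d_{T^1\Omega}(\phi_{t_n}\tilde v,\gamma_n^{-1}\tilde w)\to 0$ this forces $\phi_\infty(\gamma_n^{-1}\tilde w)\to\phi_\infty\tilde v$. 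Since $\phi_\infty(\gamma_n^{-1}\tilde w)=\gamma_n^{-1}\cdot\phi_\infty\tilde w\in\Gamma\cdot\phi_\infty\tilde w$, this shows $\phi_\infty\tilde v\in\overline{\Gamma\cdot\phi_\infty\tilde w}$, as desired. The \textbf{main obstacle} is the degenerate case where the times $t_n$ do not tend to infinity: then, up to subsequence, $t_n\to t_\infty\in[0,\infty)$ and $\gamma_n\phi_{t_n}\tilde v\to\tilde w$ with $\gamma_n$ necessarily eventually constant (by properness of the $\Gamma$-action, since $\phi_{t_n}\tilde v$ stays in a compact set), say $\gamma_n=\gamma$ for large $n$; then $\tilde w=\gamma\phi_{t_\infty}\tilde v$ and $\phi_\infty\tilde w=\gamma\phi_\infty\tilde v$, so $\phi_\infty\tilde v=\gamma^{-1}\phi_\infty\tilde w\in\Gamma\cdot\phi_\infty\tilde w$ and we are done. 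I should note that Proposition~\ref{strongstablemanifolds}.\ref{Item : varstabfaible} is actually not needed for this argument — the non-increasing distance is a red herring here; the whole statement follows from equivariance and continuity of $\phi_\infty$ — but if the intended proof does use it, the role would be to upgrade a weaker convergence of footpoints into convergence of endpoints, which my argument sidesteps by working directly with $d_{T^1\Omega}$.
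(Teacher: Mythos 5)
Your overall skeleton matches the paper's (pass to $\gamma_n^{-1}$, identify $\gamma_n^{-1}\phi_\infty\tilde w$ as the sequence that should converge to $\phi_\infty\tilde v$, and treat the bounded-time case separately), but the crucial step is asserted rather than proved. You claim that $d_{T^1\Omega}(\phi_{t_n}\tilde v,\gamma_n^{-1}\tilde w)\to 0$ together with $\phi_\infty(\phi_{t_n}\tilde v)=\phi_\infty\tilde v$ for all $n$ ``forces'' $\phi_\infty(\gamma_n^{-1}\tilde w)\to\phi_\infty\tilde v$ ``by continuity of $\phi_\infty$'' / ``by joint continuity''. Continuity of $\phi_\infty$ on $T^1\Omega$ is of no direct use here: both sequences $\phi_{t_n}\tilde v$ and $\gamma_n^{-1}\tilde w$ leave every compact subset of $T^1\Omega$ (their footpoints converge to the boundary point $\phi_\infty\tilde v$), so you would need a statement of \emph{uniform} continuity of $\phi_\infty$ with respect to $d_{T^1\Omega}$, which you do not establish and which is exactly the kind of assertion that is delicate when $\partial\Omega$ contains segments --- the very setting this paper is concerned with. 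Knowing that the footpoints and the time-one points of $\gamma_n^{-1}\tilde w$ converge (even in the Euclidean sense, via Remark~\ref{Hilbert VS Euclidean}) to $\phi_\infty\tilde v$ does not determine the limit of the forward endpoints, since the two points defining the ray collapse to a single boundary point. This is precisely why Observation~\ref{NW<T1Mcore} only reaches the full orbital limit set (accumulation of interior orbit points), whereas the present lemma requires controlling boundary endpoints.

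The paper closes this gap with the two ingredients you dismiss as unnecessary. It introduces auxiliary vectors $\tilde u_n$ with $\phi_\infty\tilde u_n=\phi_\infty\tilde w$ and $\phi_{-\infty}\tilde u_n=\gamma_n\phi_{-\infty}\tilde v$, so that $\gamma_n^{-1}\tilde u_n$ and $\phi_{t_n}\tilde v$ share their \emph{backward} endpoint; Proposition~\ref{strongstablemanifolds}.\ref{Item : varstabfaible}, applied to the time-reversed vectors, then gives $d_{T^1\Omega}(\tilde v,\gamma_n^{-1}\phi_{-t_n}\tilde u_n)\le d_{T^1\Omega}(\phi_{t_n}\tilde v,\gamma_n^{-1}\tilde u_n)\to 0$. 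This transports the comparison back to time $0$, where the sequence $\gamma_n^{-1}\phi_{-t_n}\tilde u_n$ converges to the \emph{fixed interior} vector $\tilde v$, and only then does ordinary continuity of $\phi_\infty$ apply, yielding $\gamma_n^{-1}\phi_\infty\tilde w=\phi_\infty(\gamma_n^{-1}\phi_{-t_n}\tilde u_n)\to\phi_\infty\tilde v$. So the monotonicity statement is not a red herring: it is the mechanism that converts closeness ``at infinity'' into closeness at a fixed compact part of $T^1\Omega$. To repair your argument you would either need to reproduce this construction or prove directly the uniform-continuity-type claim you are implicitly using.
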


\begin{proof}
 By assumption there exist sequences $(t_n)_n\in [0,\infty)^\N$ and $(\gamma_n)_n\in\Gamma^\N$ such that 
 \[\gamma_n\phi_{t_n}\tilde{v}\underset{n\to\infty}{\longrightarrow}\tilde{w}.\]
 This implies that, for $n$ large enough, $[\gamma_n\phi_{-\infty}\tilde{v},\phi_\infty\tilde{w}]\cap\Omega$ is non-empty; let us consider $\tilde{u}_n\in T^1\Omega$ such that $\phi_{-\infty}\tilde{u}_n=\gamma_n\phi_{-\infty}\tilde{v}$, and $\phi_\infty\tilde{u}_n=\phi_\infty\tilde{w}$, and $\pi\tilde{u}_n$ is a closest point of $[\gamma_n\phi_{-\infty}\tilde{v},\phi_\infty\tilde{w}]$ to $\pi\tilde{w}$ for the Hilbert distance. We easily observe that $(\tilde{u}_n)_n$ converges to $\tilde{w}$ as $n$ tends to infinity. By Proposition~\ref{strongstablemanifolds}.\ref{Item : varstabfaible}, we obtain
 \begin{align*}
  d_{T^1\Omega}(\tilde{v},\gamma_n^{-1}\phi_{-t_n}\tilde{u}_n) & \leq d_{T^1\Omega}(\phi_{t_n}\tilde{v},\gamma_n^{-1}\tilde{u}_n) = d_{T^1\Omega}(\gamma_n\phi_{t_n}\tilde{v},\tilde{u}_n) \\
  & \leq d_{T^1\Omega}(\gamma_n\phi_{t_n}\tilde{v},\tilde{w}) + d_{T^1\Omega}(\tilde{w},\tilde{u}_n) \\
  & \underset{n\to\infty}{\longrightarrow} 0.
 \end{align*}
 Therefore, $\gamma_n^{-1}\phi_\infty\tilde{w}=\gamma_n^{-1}\phi_\infty \tilde{u}_n$ tends to $\phi_\infty \tilde{v}$ as $n$ goes to infinity.
\end{proof}

\section{Proof of Theorem~\ref{mixing}}\label{Proof of Mixing}

\subsection{Topological mixing on the biproximal unit tangent bundle}

The first claim of Theorem~\ref{mixing} is an immediate consequence of Proposition~\ref{local_non_arithmeticity} and the following.

\begin{prop}
 Let $\Omega\subset\PR(V)$ be a properly convex open set and $\Gamma\subset\Aut(\Omega)$ a discrete subgroup. Set $M=\Omega/\Gamma$ and assume that $T^1M_{bip}$ is non-empty and that the set of lengths of the biproximal periodic orbits through any non-empty open subset $U\subset T^1M_{bip}$ generate a dense subgroup of $\R$. Suppose moreover that $\Lambda_\Gamma\smallsetminus((x_\gamma^+\oplus x_\gamma^0)\cup (x_\gamma^-\oplus x_\gamma^0))$ is non-empty for any biproximal element $\gamma\in\Gamma$. Then the geodesic flow on $T^1M_{bip}$ is topologically mixing.
\end{prop}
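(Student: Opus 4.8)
The plan is to show that for arbitrary non-empty open subsets $U,V\subset T^1M_{bip}$ one has $\phi_t(U)\cap V\neq\emptyset$ for all large $t$; writing $R(U,V):=\{t>0:\phi_t(U)\cap V\neq\emptyset\}$, the goal is $R(U,V)\supset[T,\infty)$ for some $T$. Since $R(U',V')\subset R(U,V)$ when $U'\subset U$ and $V'\subset V$, and since the hypothesis on the local length spectrum forces biproximal periodic vectors to be dense in $T^1M_{bip}$ (there must be biproximal periodic orbits through any open set), I may assume $U=B(v_0,\varepsilon)$ and $V=B(w_0,\varepsilon)$ for biproximal periodic vectors $v_0,w_0$, tangent to the periodic geodesics of biproximal elements $\gamma_0,\delta_0\in\Gamma$ whose axes meet $\Omega$. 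By continuity of the geodesic flow, fix $\tau>0$ such that $d(u,v_0)<\varepsilon/2$ implies $\phi_su\in U$ for $|s|<\tau$, and likewise around $w_0$; this fixed ``dwell time'' is what will let one pass from a dense subgroup of $\R$ to a cofinite subset at the end.

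\emph{Geometric input: heteroclinic orbits.} Let $\gamma\in\Gamma$ be biproximal with $\axis(\gamma)$ meeting $\Omega$. By Lemma~\ref{biproxiattractingpointsaresmooth} the points $x_\gamma^\pm\in\partial\Omega$ are smooth, with supporting hyperplanes $x_\gamma^\pm\oplus x_\gamma^0$. A segment issuing from a smooth point $\xi\in\partial\Omega$ remains in $\partial\Omega$ exactly when its other endpoint lies on $T_\xi\partial\Omega$ (which is disjoint from $\Omega$); hence for $\zeta\in\partial\Omega\smallsetminus(x_\gamma^-\oplus x_\gamma^0)$ the projective line $x_\gamma^-\oplus\zeta$ meets $\Omega$. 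The hypothesis on $\Lambda_\Gamma$, together with the minimality of the $\Gamma$--action on $\Lambda_\Gamma$ (Remark~\ref{minimality}, which applies since $\Gamma$ is irreducible --- in particular in our applications where it is strongly irreducible --- and contains a biproximal element; alternatively one argues directly from the density of biproximal periodic vectors), yields, for any biproximal $\gamma,\delta\in\Gamma$ with axes meeting $\Omega$, some $\alpha\in\Gamma$ with $\alpha x_\delta^+\notin x_\gamma^-\oplus x_\gamma^0$. Then there is a straight geodesic $c$ of $\Omega$ from $x_\gamma^-$ to $\alpha x_\delta^+$, both of which lie in $\Lambda_\Gamma$, so $\bar c\in T^1M_{bip}$. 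Now $\alpha x_\delta^+$ is the (smooth) attracting fixed point of the biproximal element $\alpha\delta\alpha^{-1}$, whose axis meets $\Omega$ and whose periodic geodesic of $M$ is that of $\delta$. Applying Proposition~\ref{strongstablemanifolds}.\ref{Item : variete stable} to the forward flow at $\alpha x_\delta^+$ and (after reversing the direction of geodesics) at the smooth point $x_\gamma^-$, one gets that in $T^1M$ the orbit of $\bar c$ converges, uniformly, to the periodic orbit of $\gamma$ as $t\to-\infty$ and to that of $\delta$ as $t\to+\infty$: $\bar c$ is a heteroclinic orbit joining these two periodic orbits.

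\emph{Assembly.} The local length-spectrum hypothesis, applied to $T^1M_{bip}$ itself, shows that the lengths of biproximal periodic orbits of $M$ generate a dense subgroup of $\R$; having fixed $\tau$, pick finitely many such orbits, of lengths $\ell_1,\dots,\ell_r$ including those of $\gamma_0$ and $\delta_0$, so that $\langle\ell_1,\dots,\ell_r\rangle$ contains a non-zero element of modulus $<\tau$. Running the ping--pong/Schottky machinery of Section~\ref{Construction of a suitable free sub-semi-group} --- leaving $\gamma_0$ and a biproximal periodic element with axis meeting $V$ unconjugated (so that their axes still meet $U$ and $V$), conjugating the remaining generators by generic elements of the Zariski-identity component of $\Gamma$ to put the family in ping--pong position, and invoking Lemma~\ref{freesubgroupbis} --- produces, for all large exponents, biproximal elements $\rho\in\Gamma$ whose periodic geodesic passes through $U$ and then, after a time $T_\rho$, through $V$, where $T_\rho$ equals $k_1\ell_1+\dots+k_r\ell_r$ up to a bounded error, with the $k_i\in\N$ arbitrary for large values. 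Because $U,V$ are open and $\axis(\rho)$ shadows the relevant periodic axes over long stretches, the set of such transit times contains, up to an additive constant, the sub-semigroup $\ell_1\N+\dots+\ell_r\N$ thickened by $(-\tau,\tau)$. A finitely generated sub-semigroup of $(\R_{>0},+)$ whose generated group is dense is $\tau$--dense in some half-line $[T_0,\infty)$ (among its generators two must have irrational ratio, since otherwise the generated group would be discrete, and $\{ma+nb:m,n\in\N\}$ with $a/b\notin\Q$ is $\tau$--dense eventually). Hence $R(U,V)\supset[T,\infty)$, and the geodesic flow on $T^1M_{bip}$ is topologically mixing.

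\emph{Main obstacle.} The crux is the assembly step: one must run the Schottky construction so as to obtain \emph{periodic orbits passing through both $U$ and $V$} with transit times covering the whole sub-semigroup above. This forces one to keep the two ``anchors'' (at $U$ and at $V$) unconjugated while still realising the general-position conditions \ref{item:proxspan}--\ref{item:posgale} of Section~\ref{Construction of a suitable free sub-semi-group} --- which is precisely where the hypothesis $\Lambda_\Gamma\smallsetminus((x_\gamma^+\oplus x_\gamma^0)\cup(x_\gamma^-\oplus x_\gamma^0))\neq\emptyset$ (for every biproximal $\gamma$) is used --- and to keep exact track of which portion of the period the orbit of $\rho$ spends in $U$ and in $V$. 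This plays the role of Bray's weak-orbit gluing lemma \cite[Lem.\,5.3]{bray_top_mixing}, but can be carried out directly here thanks to the explicit strong-stable-manifold description of Proposition~\ref{strongstablemanifolds}; alternatively one concatenates the heteroclinic segments of the geometric step, using that two of them asymptotic to a common periodic orbit shadow each other after finite time.
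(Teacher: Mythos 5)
Your ``geometric input'' step --- producing, via Lemma~\ref{biproxiattractingpointsaresmooth}, the hypothesis on $\Lambda_\Gamma$, and Proposition~\ref{strongstablemanifolds}, a heteroclinic orbit joining two prescribed biproximal periodic orbits --- is essentially the paper's geometric heart. (One caveat: the proposition as stated does not assume $\Gamma$ irreducible, so Remark~\ref{minimality} is not available; the paper instead uses the north--south dynamics of a single biproximal element to push a point of $\Lambda_\Gamma$ towards $\phi_\infty\tilde u_2$.) The genuine gap is in your ``Assembly'' step. You claim that the Schottky machinery produces biproximal $\rho\in\Gamma$ whose closed geodesic passes through $U$, then through $V$ after time $T_\rho=k_1\ell_1+\dots+k_r\ell_r+O(1)$ with $k_i\in\N$ arbitrary. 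But Lemma~\ref{freesubgroupbis} controls only the attracting/repelling fixed points of a cyclically reduced word, not its translation length, and nothing in the paper supplies the quantitative shadowing (weak-orbit gluing) estimate this would require; the Hilbert geodesic flow is not uniformly hyperbolic when $\Omega$ is not strictly convex, so such estimates do not come for free. You flag this yourself as the ``main obstacle'', but neither of your proposed remedies is actually carried out, and the introduction explicitly advertises that the paper's proof was designed precisely to avoid this gluing lemma.

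The paper's assembly is much lighter and does not go through Schottky subgroups at all. It takes a \emph{single} heteroclinic orbit $w$ running from the periodic orbit of some biproximal $v_1$ through $U_1$ (period $\tau_1$) to that of some $v_2$ through a shrunken $U_2'\subset U_2$ (period $\tau_2$), chosen so that the line $\phi_{-\infty}\tilde v_1\oplus\phi_\infty\tilde v_2$ meets $\Omega$ --- this is where the hypothesis on $\Lambda_\Gamma$ enters. Proposition~\ref{strongstablemanifolds}.\ref{Item : variete stable} then gives $\phi_{t_1-n\tau_1}w\to v_1$ and $\phi_{t_2+m\tau_2}w\to v_2$, so for $n,m\geq N$ the orbit of $w$ lies in a neighbourhood $V_1\subset U_1$ at times $t_1-n\tau_1$ and in $V_2\subset U_2'$ at times $t_2+m\tau_2$; the return set $\{-t_1+t_2+n\tau_1+m\tau_2:n,m\geq N\}$ drops out with no further dynamical input. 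The only arithmetic needed is Observation~\ref{subgroups_of_R}: from the local non-arithmeticity hypothesis one picks a \emph{single} period $\tau_2$ of an orbit through $U_2'$ such that $\tau_1\Z+\tau_2\Z$ is $\epsilon/2$-dense, whence the $\epsilon$-thickening of the return set contains a half-line. No finite family $\ell_1,\dots,\ell_r$, no free group, and no control over periods of Schottky words is required.
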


\begin{proof}
Let $U_1$ and $U_2$ be two open subsets of $T^1M_{bip}$. Let us prove that there exists $T>0$ such that $\phi_t(U_1)\cap U_2\neq\emptyset$ for all $t\geq T$.

Since the map $(t,v)\mapsto \phi_t(v)$ is continuous, we can find an open subset $\emptyset\neq U_2'\subset U_2$, and $\epsilon>0$ such that $\phi_{[-\epsilon,\epsilon]}(U_2')\subset U_2$. As a consequence, for any $t\in\R$, if $\phi_t(U_1)$ and $U_2'$ intersect, then for  all $s\in\R$ which are $\epsilon$-close to $t$, the sets $\phi_s(U_1)$ and $U_2$ intersect.

Let $\pi_\Gamma : T^1\Omega\rightarrow T^1M$ be the natural projection. Let us find small non-empty open subsets $\widetilde{V}_1\subset\pi_\Gamma^{-1}U_1$ and $\widetilde{V}_2\subset \pi_\Gamma^{-1}U_2'$ such that for any $(\tilde{v}_1,\tilde{v}_2)\in \widetilde{V}_1\times\widetilde{V}_2$, the line $(\phi_{-\infty}\tilde{v}_1\oplus\phi_{\infty}\tilde{v}_2)$ intersects $\Omega$.  By assumption, we cand find $\tilde{u}_1\in\pi_\Gamma^{-1}(U_1)$ and $\tilde{u}_2\in\pi_\Gamma^{-1}(U_2')$ biproximal periodic. If $(\phi_{-\infty}\tilde{u}_1\oplus\phi_{\infty}\tilde{u}_2)$ intersects $\Omega$, then we can take $\widetilde{V}_1$ (\resp $\widetilde V_2$) to be a small neighbourhood of $\tilde u_1$ (\resp $\tilde u_2$). Otherwise, $T_{\phi_{-\infty}\tilde u_1}\partial\Omega=T_{\phi_\infty \tilde u_2}\partial\Omega$ since $\phi_{-\infty}\tilde u_1$ and $\phi_\infty \tilde u_2$ are smooth by Lemma~\ref{biproxiattractingpointsaresmooth}. By assumption, there exists $x\in\Lambda_\Gamma\smallsetminus (T_{\phi_{-\infty}\tilde u_2}\partial\Omega\cup T_{\phi_\infty\tilde u_2}\partial\Omega)$. We can then take $\widetilde V_1$ to be a small neighbourhood of $\tilde u_1$, and $\widetilde V_2$ to be a small neighbourhood of any vector $\tilde u_2'$ which is close enough to $\tilde u_2$ and such that $\phi_{-\infty}\tilde u_2'=\phi_{-\infty}\tilde u_2$ and $\phi_\infty\tilde u_2'=\gamma_2^n x$ for $n$ large enough, where $\gamma_2\in\Gamma$ is the biproximal element associated to the orbit of $\tilde u_2$.
 
For $i=1,2$, pick $\tilde{v}_i\in \widetilde{V}_i$ biproximal with period $\tau_i$ such that $\tau_1\Z+\tau_2\Z$ is $\frac{\epsilon}{2}$-dense in $\R$; this is possible by assumption and by Observation~\ref{subgroups_of_R} below.

We know that $(\phi_{-\infty}\tilde{v}_1,\phi_\infty \tilde{v}_2)\subset\Omega$. Let us consider $\tilde{w}\in T^1\Omega$ tangent to this line: it is pointing forward at $\phi_{\infty}\tilde{v}_2$ and backward at $\phi_{-\infty}\tilde{v}_1$. By Lemma~\ref{biproxiattractingpointsaresmooth}, the points $\phi_{\pm\infty}\tilde{v}_i$ are smooth. Using Proposition~\ref{strongstablemanifolds}, we can find $t_1$ and $t_2\in\R$ which verify $\lim_{t\to\infty}d_{T^1\Omega/\Gamma}(\phi_{-t}v_1,\phi_{t_1-t}w)=0$ and $\lim_{t\to\infty}d_{T^1\Omega/\Gamma}(\phi_ tv_2,\phi_{t_2+t}w)=0$, where for $i=1,2$, we set $v_i=\pi_\Gamma\tilde{v}_i$, and $w=\pi_\Gamma\tilde{w}$. This implies that there is an integer $N\geq 0$ such that for any $n\geq N$, $\phi_{t_1-n\tau_1}w\in V_1:=\pi_\Gamma\widetilde{V}_1$ and $\phi_{t_2+n\tau_2}w\in V_2:=\pi_\Gamma\widetilde{V}_2$. We deduce that $\phi_t(V_1)\cap V_2\neq\emptyset$ for any $t$ in $\{-t_1+t_2+n_1\tau_1+n_2\tau_2: \ n_1,n_2\geq N\}$.

We conclude by observing that the $\epsilon$-neighbourhood of $\{-t_1+t_2+n_1\tau_1+n_2\tau_2: \ n_1,n_2\geq N\}$ contains a real interval of the form $[T,\infty)$ for $T$ large enough. Indeed, there exists $N'>0$ such that $[0,\tau_1]$ is contained in the $\epsilon$-neighbourhood of $\{n_1\tau_1+n_2\tau_2: |n_i|\leq N'\}$. Then the $\epsilon$-neighbourhood of $\{-t_1+t_2+n_1\tau_1+n_2\tau_2: n_1,n_2\geq N\}$ contains $[-t_1+t_2+(N+N')\tau_1+(N+N')\tau_2,\infty[$.
\end{proof}

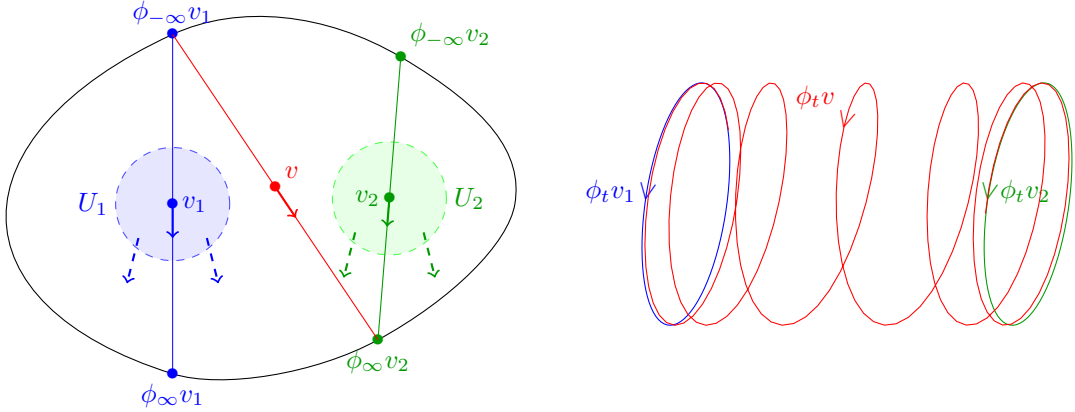
\begin{figure}
\centering
\begin{tikzpicture}[scale=1.5]
 \coordinate (U1) at (0,0);
 \coordinate (U1') at (0,-3);
 \coordinate (U2) at (2,-0.2);
 \coordinate (U2') at (1.8,-2.7);
 \coordinate (v1) at ($(U1)!0.5!(U1')$);
 \coordinate (v1') at ($(v1)!0.2!(U1')$);
 \coordinate (v2) at ($(U2)!0.5!(U2')$);
 \coordinate (v2') at ($(v2)!0.2!(U2')$);
 \coordinate (A) at (3,-1.5);
 \coordinate (v) at ($(U1)!0.5!(U2')$);
 \coordinate (v') at ($(v)!0.2!(U2')$);
 \path (v1) ++(-0.7,0) node[blue]{$U_1$};
 \path (v2) ++(0.7,0) node[green!60!black]{$U_2$};
 
 \length{U1,U1',U2',A,U2}
 \cvx{U1,U1',U2',A,U2}{1.5}
 
 \draw [color=blue!70,dashed,fill=blue!10] (v1) circle (0.5);
 \draw [color=green!70,dashed,fill=green!10] (v2) circle (0.5);
 \draw (U1) node[blue]{$\bullet$} node[above,blue]{$\phi_{-\infty}v_1$} ;
 \draw (U1') node[blue]{$\bullet$} node[below,blue]{$\phi_\infty v_1$} ;
 \draw (U2) node[green!60!black]{$\bullet$} node[above right,green!60!black]{$\phi_{-\infty}v_2$} ;
 \draw (U2') node[green!60!black]{$\bullet$} node[below,green!60!black]{$\phi_\infty v_2$} ;
 \draw (v1) node[blue]{$\bullet$} node[blue,right]{$v_1$};
 \draw [blue, thick,->] (v1)--(v1');
 \draw (v2) node[green!60!black]{$\bullet$} node[green!60!black,left]{$v_2$};
 \draw [green!60!black, thick,->] (v2)--(v2');
 \draw [blue,very thin] (U1)--(U1');
 \draw [green!60!black, thin] (U2)--(U2');
 \draw (v) node[red]{$\bullet$} node[red,above right]{$v$};
 \draw [red,thin] (U1)--(U2');
 \draw [red, thick,->] (v)--(v');
 \draw (v1)++(-0.3,-0.3)[->,dashed,blue,thick]--++(-0.1,-0.4);
 \draw (v1)++(0.3,-0.3)[->,dashed,blue,thick]--++(0.1,-0.4);
 \draw (v2)++(-0.3,-0.3)[->,dashed,green!60!black,thick]--++(-0.1,-0.4);
 \draw (v2)++(0.3,-0.3)[->,dashed,green!60!black,thick]--++(0.1,-0.4);

 \draw [domain=0:2*pi, samples=80,blue,postaction={decorate},
    decoration={
      markings,
      mark=at position .2 with
      {\draw[ultra thick,blue] node[transform shape]{$>$} node[left]{$\phi_tv_1$};}}] plot (4.5, {cos(\x r)-1.5}, {sin(\x r)});
 \draw [domain=0:2*pi, samples=80,green!60!black,postaction={decorate},
    decoration={
      markings,
      mark=at position .2 with
      {\draw[ultra thick,green!60!black] node[transform shape]{$>$} node[right]{$\phi_tv_2$};}}] plot (7.5, {cos(\x r)-1.5}, {sin(\x r)});
 \draw [domain=5:16*pi-5, samples=200,red,postaction={decorate},
    decoration={
      markings,
      mark=at position .5 with
      {\draw[ultra thick,red] node[transform shape]{$>$} node[above left]{$\phi_tv$};}}] plot ({6+1.5*sin(pi/2*sin((-pi/2+\x/16) r) r)}, {cos(\x r)-1.5}, {sin(\x r)});
\end{tikzpicture}
\caption{Proof of mixing. On the left: In $\Omega$. On the right: In the quotient $M=\Omega/\Gamma$.}\label{figure_mixing}
\end{figure}

\begin{obs}\label{subgroups_of_R}
Let $A$ be a subset of $\R$ which generates a dense additive subgroup $G$ of $\R$. Let $x,\epsilon>0$. Then there exists $g\in A$ such that $x\Z+g\Z$ is $\epsilon$-dense in $\R$ (\ie any point in $\R$ is at distance at most $\epsilon$ from $x\Z+g\Z$).
\end{obs}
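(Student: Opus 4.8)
The plan is to reduce everything to the classification of subgroups of $\R$: any subgroup of $\R$ is either dense or of the form $c\Z$ for some $c\geq 0$. I would first split into two cases according to whether $A$ contains an element $g$ with $g/x$ irrational.

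In the first case, if some $g\in A$ satisfies $g/x\notin\Q$, then the subgroup $x\Z+g\Z$ cannot equal any $c\Z$ (otherwise $x=mc$ and $g=nc$ for integers $m,n$, forcing $g/x=n/m\in\Q$), so it is dense in $\R$, hence $\epsilon$-dense, and this $g$ works. In the second case, every $a\in A$ is a rational multiple of $x$; write $a=\frac{p_a}{q_a}x$ with $q_a\geq 1$ and $\gcd(|p_a|,q_a)=1$. The key observation is that the denominators $q_a$ must be unbounded: if $q_a\leq Q$ for all $a\in A$, then $A\subset \frac{x}{\operatorname{lcm}(1,\dots,Q)}\Z$, so the subgroup $G$ generated by $A$ is contained in $\frac{x}{\operatorname{lcm}(1,\dots,Q)}\Z$, contradicting the density of $G$ in $\R$. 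I would then choose $g=\frac{p}{q}x\in A$ with $q>x/\epsilon$; by B\'ezout's identity $p\Z+q\Z=\Z$, so $x\Z+g\Z=\frac{x}{q}(q\Z+p\Z)=\frac{x}{q}\Z$, which is $\frac{x}{q}$-dense, hence $\epsilon$-dense, in $\R$.

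This settles both cases, so the observation follows. The argument is entirely elementary: the only points requiring a little care are the clean case distinction and the invocation of the structure of subgroups of $\R$ (together with the fact that a subgroup of $\Q x$ is dense if and only if it is not cyclic), so I do not anticipate any real obstacle here.
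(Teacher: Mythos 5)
Your proof is correct and follows essentially the same route as the paper: normalise by $x$ (you keep $x$ explicit, the paper divides it out), split according to whether $A$ contains an irrational multiple of $x$, and in the rational case use density of $G$ to force unbounded denominators (the paper exhibits an element outside $\frac{1}{q_0!}\Z$, you argue via $\operatorname{lcm}(1,\dots,Q)$ — the same idea in contrapositive form) and conclude with B\'ezout. No gaps.
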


\begin{proof}
Up to replacing $A$ by $A/x$ and $\epsilon$ by $\epsilon/x$, we can assume that $x=1$. Then there are two possibilities.
\begin{itemize}
\item
The set $A$ contains an irrational number $g$. Then $\Z+g\Z$ is dense in $\R$.
\item
The set $A$ is contained in $\Q$. Let $q_0\in\N^*$ be such that $\frac{1}{q_0}<\epsilon$. The subgroup $\frac{1}{q_0!}\Z$ is not dense in $\R$, so $A$ must contain an element outside of it, of the form $\frac{p}{q}$ with $p$ and $q$ coprime and $q>q_0$. The group $\Z+\frac{p}{q}\Z=\frac{1}{q}\Z$ is $\epsilon$-dense in $\R$.\qedhere
\end{itemize}
\end{proof}

\subsection{Maximality of the biproximal unit tangent bundle}

The second claim of Theorem~\ref{mixing} is a consequence of the following more general proposition.

\begin{prop}
 Let $\Omega\subset\PR(V)$ be a properly convex open set and $\Gamma\subset\Aut(\Omega)$ a discrete subgroup; set $M=\Omega/\Gamma$. If $T^1M_{bip}$ is non-empty, then it is maximal for inclusion among all closed invariant subsets of $T^1M$ on which the geodesic flow is topologically transitive.
\end{prop}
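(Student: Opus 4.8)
The plan is to produce a single vector $v\in A$ that already belongs to $T^1M_{bip}$ and whose forward orbit $\{\phi_tv:t\geq 0\}$ is dense in $A$. This suffices: since $T^1M_{bip}$ is closed and $(\phi_t)_t$-invariant, it then contains $\overline{\{\phi_tv:t\geq 0\}}=A$, and the inclusion $T^1M_{bip}\subseteq A$ is one of the hypotheses, so $A=T^1M_{bip}$.

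To find $v$, observe that $A$ is a closed subspace of the locally compact, second-countable, Hausdorff space $T^1M$, hence is a non-empty Baire space with a countable basis $(W_n)_n$. By topological transitivity of $(\phi_t)_t$ on $A$, for each $n$ the open sets $O_n^+:=\bigcup_{t>0}\phi_{-t}(W_n)$ and $O_n^-:=\bigcup_{t>0}\phi_{t}(W_n)$ are dense in $A$; moreover $\bigcap_nO_n^+$ is exactly the set of vectors with dense forward orbit in $A$, and $\bigcap_nO_n^-$ the set of vectors with dense backward orbit. By the Baire property, $\bigl(\bigcap_nO_n^+\bigr)\cap\bigl(\bigcap_nO_n^-\bigr)\neq\emptyset$; fix $v$ in this set, so that both $\{\phi_tv:t\geq 0\}$ and $\{\phi_{-t}v:t\geq 0\}$ are dense in $A$.

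It remains to show $v\in T^1M_{bip}$. Fix any $u\in T^1M_{bip}$ (non-empty by hypothesis) and a lift $\tilde u\in T^1\Omega$, so that $\phi_{\pm\infty}\tilde u\in\Lambda_\Gamma$ by Definition~\ref{T1Omegabip}. Since $u\in T^1M_{bip}\subseteq A$ and the forward orbit of $v$ is dense in $A$, the point $u$ lies in the closure of $\{\phi_tv:t\geq 0\}$, so Lemma~\ref{Lemme : limite d'orbite} gives, for any lift $\tilde v$ of $v$, that $\phi_\infty\tilde v$ lies in the closure of $\Gamma\cdot\phi_\infty\tilde u$; as $\Lambda_\Gamma$ is closed, $\Gamma$-invariant and contains $\phi_\infty\tilde u$, we get $\phi_\infty\tilde v\in\Lambda_\Gamma$. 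For the other endpoint we run the same argument for the time-reversed flow. Let $\sigma\colon w\mapsto -w$ be the flip involution of $T^1\Omega$; it commutes with $\Gamma$, hence descends to $T^1M$, it conjugates $(\phi_t)_t$ to $(\phi_{-t})_t$, at the level of lifts it exchanges $\phi_\infty$ and $\phi_{-\infty}$, and by the symmetry of Definition~\ref{T1Omegabip} in $\pm\infty$ it preserves $T^1M_{bip}$. Consequently $\sigma(A)$ is again closed, $(\phi_t)_t$-invariant, transitive, and contains $T^1M_{bip}$; the vector $\sigma(v)$ has dense forward orbit in $\sigma(A)$; and $\sigma(u)\in T^1M_{bip}\subseteq\sigma(A)$ lies in the closure of $\{\phi_t\sigma(v):t\geq 0\}$. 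Applying Lemma~\ref{Lemme : limite d'orbite} to $\sigma(v)$ and $\sigma(u)$ and translating back through $\sigma$ yields $\phi_{-\infty}\tilde v\in\overline{\Gamma\cdot\phi_{-\infty}\tilde u}\subseteq\Lambda_\Gamma$. Thus $\phi_{\pm\infty}\tilde v\in\Lambda_\Gamma$, i.e.\ $v\in T^1M_{bip}$, and we conclude as above.

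All of the substance is concentrated in Lemma~\ref{Lemme : limite d'orbite} (which in turn rests on the monotonicity in Proposition~\ref{strongstablemanifolds}.\ref{Item : varstabfaible}); the remainder is soft point-set topology. The one place that calls for a little care is the passage to the time-reversed flow in the last step: one must check that $\sigma\colon w\mapsto-w$ really descends to $T^1M$, that it carries the three hypotheses on $A$ (closed, invariant, transitive) to the same hypotheses on $\sigma(A)$, and that on lifts it intertwines $\phi_\infty$ with $\phi_{-\infty}$, so that Lemma~\ref{Lemme : limite d'orbite} may legitimately be invoked in the backward direction.
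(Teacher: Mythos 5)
Your proof is correct and follows essentially the same route as the paper's: a Baire category argument produces a vector $v\in A$ with dense forward and backward orbits, Lemma~\ref{Lemme : limite d'orbite} gives $\phi_\infty\tilde v\in\Lambda_\Gamma$, and the flip $w\mapsto -w$ (which the paper uses implicitly via $\{\phi_t(-v):t\ge 0\}$) handles the backward endpoint. Your version merely spells out the Baire step and the time-reversal symmetry in more detail than the paper does.
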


\begin{proof}
Consider a closed invariant subset $A\subset T^1M$ containing $T^1M_{bip}$ and on which $(\phi_t)_t$ is topologically transitive. Since $A$ has the Baire property and is second countable, there exists $v\in A$ such that both $\{\phi_{t}v : t\geq 0\}$ and $\{\phi_tv : t\leq 0\}$ are dense in $A$. 

Take $w\in T^1M_{bip}$ (which is non-empty by assumption), and consider respective lifts $\tilde{v},\tilde{w}\in T^1\Omega$ of $v,w$. By definition of $v$, the vector $w$ is in the closure of the forward orbit $\{\phi_tv : t\geq 0\}$, so we can apply Lemma~\ref{Lemme : limite d'orbite} and we obtain $\phi_\infty\tilde{v}\in\Lambda_\Gamma$. Using again Lemma~\ref{Lemme : limite d'orbite}, and the fact that $w$ is in the closure of $\{\phi_t(-v) : t\geq 0\}=\{-\phi_tv : t\leq 0\}$, we see that $\phi_{-\infty}\tilde{v}\in\Lambda_\Gamma$. We have proved that $v\in T^1M_{bip}$, therefore 
\[A=\overline{\{\phi_tv : t\in\R\}}\subset T^1M_{bip}\subset A,\]
and this concludes the proof.
\end{proof}

\section{The geodesic flow in the higher-rank compact case}\label{symmetric_convex}

The goal of this section is to prove Proposition~\ref{symmetric_top_mixing}. We are actually going to prove a finer statement: that the connected components of the non-wandering set of the geodesic flow are quotients of homogeneous spaces whose Haar measure is mixing.

We denote by $\mathbb{H}$ the classical division algebra of quaternions, and by $\mathbb{O}$ the classical non-associative division algebra of octonions. Fix an integer $N\geq 3$ and the algebra $\K=\R$, $\C$, $\mathbb{H}$, or, if $N=3$, $\mathbb{O}$. 
We shall use the following notation. In the case $\K=\R$, conjugation is the identity and we abusively say Hermitian instead of symmetric.
\begin{itemize}
\item For $x\in\K$, the element $\overline{x}\in\K$ is the conjugate of $x$.
\item We consider the Hermitian bilinear form on $\K^N$ given by $\langle x,y\rangle=\sum_{i=1}^N\overline{x}_iy_i$.
\item The real vector space $V=V_{N,\K}$ consists of the Hermitian matrices of size $N$ with entries in $\K$.
\item The cone $C=C_{N,\K}\subset V$ consists of the positive-definite Hermitian matrices.
\item The properly convex open set $\Omega=\Omega_{N,\K}\subset \PR(V)$ is the projectivisation of $C$.
\item The group $\Aut(C)\subset \GL(V)$ consists of the transformations preserving $C$.
\item The group $G=G_{N,\K}:=\Aut(\Omega)=\Aut(C)/\R^*$ is the automorphism group of $\Omega$, where $\R^*$ is seen as the group of homotheties of $\GL(V)$.
\item The group $K\subset\Aut(C)$ is the stabiliser of the identity matrix; note that the map $K\rightarrow G$ is an embedding, and that $K$ is a maximal compact subgroup of $G$.
\item Finally the group $A$ consists of the diagonal matrices of size $N$ with entries in $\R_{>0}$; we see it embedded in $\Aut(C)$, acting on $V$ by the following formula: $a\cdot X=aXa$ for $a\in A$ and $X\in V$.
\end{itemize}

Let us be more explicit about the case $\K=\R$. The group $\Aut(C_{N,\R})$ identifies with $\GL_N(\R)/\{\pm1\}$, acting on $V_{N,\R}$ by the formula $g\cdot X=gXg^t$; the group $G_{N,\R}$ identifies with $\PGL_N(\R)$; the group $K$ identifies with $\mathrm{O}(N)/\{\pm1\}$.

We come back to the general case. The spectral theorem (see \cite[Th.\,V.2.5]{analysis_on_symmetric_cones}) ensures that for every $X\in V$ there exists $k\in K$ such that $k\cdot X$ is diagonal with real entries. This, using the action of $A$, has two consequences: $\Aut(C)$ acts transitively on $C$, and can be written as the product $KAK=\{k_1ak_2: k_1,k_2\in K, a\in A\}$. Then, the quotient group $G$ acts transitively on $\Omega$, and can be written $K(A/\R_{>0})K$ --- actually, the element of $A/\R_{>0}$ in the decomposition can be taken with non-increasing entries on the diagonal, and this yields the \emph{Cartan decomposition} of $G$. The Lie algebra of $G$ is $\mathfrak{sl}(N,\K)$ when $\K\neq\mathbb{O}$, and $\mathfrak{e}_{6(-26)}$ if $\K=\mathbb{O}$ (see \cite[p.\,97]{analysis_on_symmetric_cones}), therefore $G$ is a non-compact real simple Lie group, with finitely many connected components, and with trivial center. Observe that $\Omega$ identifies as a $G$-space with the Riemannian symmetric space of $G$.

Since $\Omega=G/K$, a discrete subgroup $\Gamma\subset G$ acts cocompactly on $\Omega$ if and only if $G/\Gamma$ is compact, \ie  $\Gamma$ is a uniform lattice of $G$; uniform lattices exist by a theorem of Borel \cite[Th.\,C]{borel_reseau}. The properly convex open sets $\Omega_{N,\K}$ are called the \emph{symmetric divisible convex sets}. Zimmer \cite[Th.\,1.4]{zimmer_higher_rank} recently proved that the higher-rank irreducible closed convex projective manifolds are exactly the quotients of the form $\Omega_{N,\K}/\Gamma$, where $N\geq 3$, the field $\K=\R$, $\C$, $\mathbb{H}$, or $\mathbb{O}$ (if $N=3$), and $\Gamma$ is a uniform lattice of $G_{N,\K}$.

\subsection{The non-wandering set of \texorpdfstring{$G\times\R$}{GNR} on \texorpdfstring{$T^1\Omega$}{T1Omega)}}

In this section we describe $\NW(T^1\Omega,G\times\R)$ and prove that $G$ acts transitively on each of its connected components. 

The boundary of $\Omega$ is the projectivisation of the cone of positive semi-definite Hermitian matrices. For $1\leq i,j\leq N-1$, we denote by $T^1\Omega_{i,j}$ the set of unit tangent vectors $v\in T^1\Omega$ such that the respective ranks of $\phi_{-\infty}v$ and $\phi_\infty v$ (meaning the rank of any representative in $V$) are $i$ and $j$. Note that $T^1\Omega_{i,j}$ is non-empty if and only if $i+j\geq N$ (see Proposition~\ref{NW dans le cas symétrique}.\ref{item:T1Oij vide}). The subsets $T^1\Omega_{i,j}$, for $1\leq i,j\leq N$ and $i+j\geq N$, are invariant under the automorphism group $\Aut(\Omega)$ and the geodesic flow $(\phi_t)_{t\in\R}$. They stratify $T^1\Omega$ in the following way:
\begin{itemize}
\item $T^1\Omega$ is the disjoint union of the $T^1\Omega_{i,j}$,
\item the closure of $T^1\Omega_{i,j}$ is the union of the $T^1\Omega_{k,\ell}$ for $1\leq k\leq i$ and $1\leq \ell\leq j$,
\item in particular, $T^1\Omega_{i,N-i}$ is closed for $1\leq i\leq N-1$,
\item $T^1\Omega_{N-1,N-1}$ is open and dense in $T^1\Omega$.
\end{itemize}
When $\K=\R$ we compute $\dim(T^1\Omega_{i,j})=i(N-i)+\frac{i(i+1)}{2}+j(N-j)+\frac{j(j+1)}{2}-1$.

We denote by $\Geod(\Omega)_{i,j}$ the quotient $T^1\Omega_{i,j}/(\phi_t)_{t\in\R}$; observe that $\Geod(\Omega):=T^1\Omega/(\phi_t)_{t\in\R}$ identifies with the set of pairs $(x,y)$ in $\partial\Omega^2$ such that $\Ker(x)\cap\Ker(y)=\emptyset$. We are going to prove that $\NW(\Geod(\Omega),G)$ is the union $\bigcup_{1\leq i\leq N-1}\Geod(\Omega)_{i,N-i}$. This exactly means, according to Section~\ref{The non-wandering set}, that $\NW(T^1\Omega,G\times\R)$ is $\bigcup_{1\leq i\leq N-1}T^1\Omega_{i,N-i}$. We choose a basepoint $v_{i,N-i}\in T^1\Omega_{i,N-i}$, such that $\pi v_{i,N-i}$, $\phi_{-\infty}v_{i,N-i}$ and $\phi_\infty v_{i,N-i}$ are the projectivisations of, respectively, the identity matrix, the orthogonal projection onto $\K^i\times\{0\}$ and the orthogonal projection onto $\{0\}\times\K^{N-i}$. We set 
\[A_{i,N-i}:=\left\{a_t:=\left[\begin{matrix} e^{t/2}I_i & 0 \\ 0 & e^{-t/2}I_j \end{matrix} \right] : t\in\R\right\}\subset A,\]
where $I_k$ is the identity matrix of size $k$, and we observe that for any $t\in\R$, the image $a_t\cdot v_{i,N-i}$ is exactly $\phi_tv_{i,N-i}$. We denote by $G_0$ the identity component of $G$ and by $K_{i,N-i}$ the stabilizer in $G_0$ of $v_{i,N-i}$; they are normalised by $A_{i,N-i}\subset G_0$.

\begin{prop}\label{NW dans le cas symétrique}
Consider $N\geq 3$, the algebra $\K=\R$, $\C$, $\mathbb{H}$, or $\mathbb{O}$ (if $N=3$), the vector space $V=V_{N,\K}$, the properly convex open set $\Omega=\Omega_{N,\K}\subset\PR(V)$, and the group $G=G_{N,\K}$, with identity component~$G_0$.
\begin{enumerate}[label=(\arabic*)]
 \item \label{item:T1Oij vide} For $1\leq i,j\leq N-1$, the set $T^1\Omega_{i,j}$ is non-empty if and only if $i+j\geq N$.
 \item \label{item:alg interpretation of geodflow} For $1\leq i\leq N-1$, the group $G_0$ acts transitively on $T^1\Omega_{i,N-i}$. If we identify $T^1\Omega_{i,N-i}$ with $G_0/K_{i,N-i}$, then the geodesic flow identifies with the action by right multiplication of $A_{i,N-i}$ on $G_0/{K_{i,N-i}}$.
 \item The non-wandering set of $G$ on $\Geod(\Omega)$ is
 \[\NW(\Geod(\Omega),G) = \bigcup_{1\leq i\leq N-1}\Geod(\Omega)_{i,N-i}.\]
\end{enumerate}
\end{prop}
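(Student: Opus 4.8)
The plan is to establish the three items in turn; items (1) and (2) are essentially formal, and item (3) contains the real work.

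\textbf{Item (1).} A point of $\Geod(\Omega)$ is a pair $(x,y)$ of classes of nonzero positive semi-definite Hermitian forms on $\K^N$ with $\Ker(x)\cap\Ker(y)=\{0\}$; if $x,y$ have ranks $i,j$ this forces $(N-i)+(N-j)=\dim\Ker(x)+\dim\Ker(y)=\dim\bigl(\Ker(x)+\Ker(y)\bigr)\le N$, i.e.\ $i+j\ge N$. Conversely, for $i+j\ge N$ the orthogonal projections onto $\K^i\times\{0\}$ and onto $\{0\}\times\K^j$ have kernels whose intersection is $\{0\}$ (because $\{1,\dots,i\}\cup\{N-j+1,\dots,N\}=\{1,\dots,N\}$), and so determine a vector in $T^1\Omega_{i,j}$.

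\textbf{Item (2).} Write $o=[I_N]\in\Omega$. First, $G_0$ acts transitively on $\Omega$: $G$ does, with point stabiliser $K$, and since $G$ deformation retracts onto its maximal compact subgroup $K$ the latter meets every component of $G$, so $G=G_0K$. Next I would describe the complete straight geodesics through $o$ with endpoint ranks $(i,N-i)$. If $o$ lies on the segment $[\xi_-,\xi_+]$, choose positive representatives with $I_N=\alpha\,\tilde\xi_-+\beta\,\tilde\xi_+$, $\alpha,\beta>0$; applying this identity to a vector of $\Ker\tilde\xi_-$ shows $\Ker\tilde\xi_-$ is contained in an eigenspace of $\tilde\xi_+$ for a nonzero eigenvalue, hence $\Ker\tilde\xi_-\subseteq\mathrm{Im}\,\tilde\xi_+$; as $\dim\Ker\tilde\xi_-=N-i=\mathrm{rank}\,\tilde\xi_+$ we get $\Ker\tilde\xi_-=\mathrm{Im}\,\tilde\xi_+=(\Ker\tilde\xi_+)^\perp$, and symmetrically $\Ker\tilde\xi_+=\mathrm{Im}\,\tilde\xi_-$; one then reads off that $\tilde\xi_-,\tilde\xi_+$ are, up to scaling, the orthogonal projections onto $W_1:=\mathrm{Im}\,\tilde\xi_-$ (dimension $i$) and $W_2:=W_1^\perp$. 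Thus such geodesics are parametrised by the orthogonal splittings $\K^N=W_1\oplus W_2$ with $\dim W_1=i$, on which the identity component $K_0$ acts transitively (it acts transitively on the connected Grassmannian of $i$-planes). Hence $G_0$ acts transitively on $T^1\Omega_{i,N-i}$, and since $K_{i,N-i}=\mathrm{Stab}_{G_0}(v_{i,N-i})$ by definition, $T^1\Omega_{i,N-i}\cong G_0/K_{i,N-i}$. Finally, from the identity $\phi_tv_{i,N-i}=a_t\cdot v_{i,N-i}$ and the commutation of the $G_0$-action with the flow, the flow on $G_0/K_{i,N-i}$ is $gK_{i,N-i}\mapsto ga_tK_{i,N-i}$, which is well defined because $A_{i,N-i}$ normalises $K_{i,N-i}$: for $\kappa\in K_{i,N-i}$, $(a_t\kappa a_t^{-1})\cdot v_{i,N-i}=a_t\cdot\phi_{-t}(\kappa\cdot v_{i,N-i})=\phi_t\phi_{-t}v_{i,N-i}=v_{i,N-i}$.

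\textbf{Item (3).} The inclusion $\bigcup_{i}\Geod(\Omega)_{i,N-i}\subseteq\NW(\Geod(\Omega),G)$ is easy: by item (2) the point $(\phi_{-\infty}v_{i,N-i},\phi_\infty v_{i,N-i})$ is fixed by the non-compact group $A_{i,N-i}\cong\R$, and every point of $\Geod(\Omega)_{i,N-i}$ is $G_0$-conjugate to it, hence has non-compact stabiliser; a point with non-compact stabiliser is non-wandering, since any compact neighbourhood $U$ of it satisfies $\{g\in G:gU\cap U\ne\emptyset\}\supseteq\mathrm{Stab}_G$. For the reverse inclusion I would prove that $G$ acts \emph{properly} on the open invariant set $X:=\Geod(\Omega)\setminus\bigcup_i\Geod(\Omega)_{i,N-i}=\bigcup_{i+j>N}\Geod(\Omega)_{i,j}$ — open because each $T^1\Omega_{i,N-i}$, hence each $\Geod(\Omega)_{i,N-i}$, is closed — which yields $X\cap\NW(\Geod(\Omega),G)=\emptyset$. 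Assume not: there are $g_n=k_na_nl_n$ leaving every compact of $G$ (Cartan decomposition, $k_n\to k$, $l_n\to l$ in $K$, $a_n=\mathrm{diag}(\alpha_1^{(n)},\dots,\alpha_N^{(n)})$ with $\alpha_1^{(n)}\ge\cdots\ge\alpha_N^{(n)}>0$ and $\alpha_1^{(n)}/\alpha_N^{(n)}\to\infty$), with $p_n\to p$ and $g_np_n\to q$ in $X$. Put $(x_n,y_n):=l_np_n\to(x_0,y_0)$, of rank type $(i,j)$ with $i+j>N$; writing $W_z=\mathrm{Im}\,z$, one has $W_{x_0}+W_{y_0}=(\Ker x_0\cap\Ker y_0)^\perp=\K^N$, so $x_0+y_0$ — hence $x_n+y_n$ for $n$ large — is positive definite, and $a_n\cdot(x_n,y_n)=([a_nx_na_n],[a_ny_na_n])\to k^{-1}q=:(\eta_-,\eta_+)\in X$, with $\eta_-,\eta_+$ positive semi-definite. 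Now $a_n\cdot[x_n+y_n]$ is the image under $a_n\in G=\Aut(\Omega)$ of a point of $\Omega$ converging to $[x_0+y_0]\in\Omega$; since $G$ acts properly on $\Omega=G/K$ ($K$ compact) and $a_n$ leaves every compact of $G$, this sequence leaves every compact subset of $\Omega$, so accumulates only on $\partial\Omega$. But $a_n\cdot[x_n+y_n]$ lies on the segment $[a_nx_na_n,a_ny_na_n]$: normalising $a_nx_na_n,a_ny_na_n$ to unit norm, with subsequential limits $\eta_-,\eta_+$ and corresponding unit-norm matrices $\hat x_n,\hat y_n$, one has $a_n\cdot[x_n+y_n]=[s_n\hat x_n+(1-s_n)\hat y_n]$ with $s_n=\|a_nx_na_n\|/(\|a_nx_na_n\|+\|a_ny_na_n\|)$, and along a subsequence with $s_n\to s\in[0,1]$ its limit is $[s\eta_-+(1-s)\eta_+]$, which must then have rank $<N$. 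If $s\in(0,1)$ this is absurd, because $s\eta_-+(1-s)\eta_+$ is positive on $W_{\eta_-}+W_{\eta_+}=\K^N$. This disposes of the ``generic'' case. In the remaining case one of $\|a_nx_na_n\|,\|a_ny_na_n\|$ dominates the other; using that the entries of $a_n$ are non-increasing one shows $\mathrm{Im}\,\eta_-$ is trapped in the ``fast'' coordinate block (the image of $\lim a_n/\alpha_1^{(n)}$) while $\mathrm{Im}\,\eta_+$ is trapped in the complementary ``slow'' block or smaller, whence $\mathrm{rank}\,\eta_-+\mathrm{rank}\,\eta_+\le N$ — so $(\eta_-,\eta_+)$ would lie in $\bigcup_i\Geod(\Omega)_{i,N-i}$, outside $X$, again a contradiction. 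The step I expect to be the main obstacle is exactly this last, non-generic case: pinning down, through the block structure of the Cartan subgroup, the coordinate subspaces in which the limiting boundary points $\eta_\pm$ are confined when the two endpoint matrices degenerate at incomparable rates; everything else reduces to bookkeeping with the spectral theorem and the explicit description of $\Omega$, of its boundary strata, and of the flow.
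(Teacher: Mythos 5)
Items (1) and (2) are correct and take essentially the paper's route; your item (1) is in fact a touch more direct (a dimension count from $\Ker x\cap\Ker y=\{0\}$, rather than the paper's normalisation to standard form and orthogonality of the two kernels). The easy inclusion in item (3) is also fine.

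The gap is the one you flag yourself: the ``non-generic'' branch of the hard inclusion in item (3). Your set-up — pushing the midpoint $a_n\cdot[x_n+y_n]\in\Omega$ to the boundary and expanding it as a convex combination $[s_n\hat x_n+(1-s_n)\hat y_n]$ — works when $s_n$ stays in $(0,1)$, but when one norm dominates the argument breaks: the limit is just $\eta_+$ (or $\eta_-$), which lies on $\partial\Omega$ with no contradiction, and the block-structure claim you sketch is not an obvious consequence of $\alpha^{(n)}_1\geq\cdots\geq\alpha^{(n)}_N$ (the relevant normalisation for $\hat x_n$ is $\|a_nx_na_n\|$, not $\|a_n\|^2$, and the two can degenerate at incomparable rates). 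The paper avoids the case split altogether. After the Cartan reduction to $g_n=a_n$, it takes a limit $a=\lim a_n$ in $\PR(\End V)$ — a nonzero, non-invertible diagonal endomorphism — and keeps the scalars $\mu_n,\nu_n$ (with $\mu_na_nS_na_n\to S'$, $\nu_na_nT_na_n\to T'$) explicit. Since $\Ker S\cap\Ker T=\{0\}$ and $a\neq 0$, after possibly swapping $(S,T)$ one has $\mathrm{Im}\,a\not\subset\Ker S$, hence $aSa\neq 0$; this forces $(\mu_n)$ to be bounded, so one may assume $\mu_n\to1$ and obtain $aSa=S'$, whence $\mathrm{rank}\,a\geq i$. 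As $i+j>N$, this gives $\mathrm{rank}\,a> N-j=\dim\Ker T$, so $\mathrm{Im}\,a\not\subset\Ker T$ as well, $aTa=T'$ after normalising, and finally $\Ker a\subset\Ker S'\cap\Ker T'=\{0\}$, contradicting non-invertibility. The case you worry about — one side degenerating much faster — corresponds precisely to $\mathrm{Im}\,a\subset\Ker S$ or $\mathrm{Im}\,a\subset\Ker T$; the first is removed by the swap, the second is ruled out by the rank estimate $\mathrm{rank}\,a\geq i>N-j$, so no coordinate-block analysis is needed. Recasting your step along those lines would close the gap.
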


\begin{proof}
\begin{enumerate}[label=(\arabic*)]
 \item Suppose there exists $v\in T^1\Omega_{i,j}$. Because $G_0$ acts transitively on $\Omega$ we can find $g_1\in G_0$ such that $g_1\pi v$ is the projectivisation of the identity matrix. Then by the spectral theorem there exists an automorphism $g_2\in K$ (\ie fixing $g_1\pi v$) such that $\Ker(g_2g_1\phi_{-\infty}v)=\{0\}\times\K^{N-i}$; since the space of $(N-i)$-dimensional right $\K$-sub-modules of $V$ is connected, we can take $g_2$ in $G_0$. We note that the subspaces $\Ker(g_2g_1\phi_{-\infty}v)$ and $\Ker(g_2g_1\phi_\infty v)$ are orthogonal. (Indeed, if $T$ and $T'$ are representatives in $V$ of $g_2g_1\phi_{-\infty}v$ and $g_2g_1\phi_{\infty}v$ such that $T+T'$ is the identity matrix, and if $x\in\Ker(T)$ while $y\in\Ker(T')$, then $\langle x,y\rangle = \langle x,Ty+T'y\rangle=\langle x,Ty\rangle=\langle Tx,y\rangle=0$.) This implies that $i+j\geq N$.
 
 \item Let $1\leq i\leq N-1$. Let us show that there exists $g\in G_0$ such that $g\cdot v$ is the basepoint $v_{i,N-i}$ of $T^1\Omega_{i,N-i}$. We have already seen that there are $g_1,g_2\in G$ such that $\pi g_2g_1v$ is the projectivisation of the identity matrix, and $\Ker(g_2g_1\phi_{-\infty}v)=\{0\}\times\K^{N-i}$. Then $\Ker(g_2g_1\phi_\infty v)=\K^i\times\{0\}$, since $\Ker(g_2g_1\phi_{-\infty}v)$ and $\Ker(g_2g_1\phi_{\infty}v)$ are orthogonal. Moreover $g_2g_1\phi_{-\infty}v$ and $g_2g_1\phi_\infty v$ are the projectivisations of the orthogonal projections onto $\K^i\times\{0\}$ and $\{0\}\times\K^{N-i}$. (Indeed, consider representatives $T$ and $T'$ of $\phi_{-\infty}g_2g_1v$ and $\phi_\infty g_2g_1v$ in $V$ such that $T+T'$ is the identity matrix; then $T'$ and $T$ are the orthogonal projections onto $\K^i\times\{0\}$ and $\{0\}\times\K^{N-i}$.)
 
 \item The stabiliser of $(\phi_{-\infty}v_{i,N-i},\phi_\infty v_{i,N-i})\in\Geod(\Omega)_{i,N-i}$ contains $A_{i,N-i}$, therefore the stabilisers of points in $\Geod(\Omega)_{i,N-i}$ are non-compact, hence $\bigcup_{1\leq i\leq N-1}\Geod(\Omega)_{i,N-i}$ is contained in $\NW(\Geod(\Omega),G)$. Let us prove the converse inclusion.

Suppose by contradiction that $\NW(\Geod(\Omega),G)$ is not contained in $\bigcup_{1\leq i\leq N-1}\Geod(\Omega)_{i,N-i}$. We may assume the existence of sequences of  positive semi-definite Hermitian matrices $(S_n)_{n\in\N}$, $(T_n)_{n\in\N}$ in $V$, of automorphisms $(g_n)_{n\in\N}$ in $\Aut(C)$ and of positive scalars $(\mu_n)_{n\in\N},(\nu_n)_{n\in\N}$, such that
\begin{itemize}
\item $(S_n)_{n}$, $(\mu_n g_nS_n)_{n}$, $(T_n)_{n}$, and $(\nu_n g_nT_n)_{n}$ respectively converge to $S$, $S'$, $T$, and $T'$,
\item the rank of $S$ and $S'$ is $i$, the rank of $T$ and $T'$ is $j$, with $1\leq i,j\leq N-1$ and $i+j>N$,
\item $\Ker(S)\cap\Ker(T)=\Ker(S')\cap\Ker(T')=\{0\}$,
\item $([g_n])_n\in G^\N$ leaves every compact subset of $G$.
\end{itemize}
 Using $\Aut(C)=KAK$ and extracting, we may assume (up to renormalising) that $g_n=a_n\in A$ converge in $\End(V)$ to a non-invertible non-zero  diagonal matrix $a$ with non-negative coefficients. We extend to $a$ the action of $A$ on $V$, with the same notation: $a\cdot X=aXa$ for any $X\in V$.

Since $\Ker(S)\cap\Ker(T)=\{0\}$, up to exchanging $S$ and $T$, we can assume that the image of $a$ is not contained in $\Ker(S)$, and this implies that $a\cdot S\neq 0$. Both $(a_n\cdot S_n)_n$ and $(\mu_na_n\cdot S_n)_n$ converge to a non-zero element of $V$, so $(\mu_n)_n$ must be bounded, and we may assume that it converges to $1$, without loss of generality. Therefore, $a\cdot S=S'$, which means the rank of $a$ is bounded below by $i$. Since $i+j>N$, the kernel of $a$ is not contained in $\Ker(T)$, and $a\cdot T\neq 0$. As before, without loss of generality, we can assume that $a\cdot T=T'$. But now the kernel of $a$ is contained in $\Ker(S')\cap\Ker(T')=\{0\}$, this is a contradiction.\qedhere
\end{enumerate}
\end{proof}

\subsection{The non-wandering set of \texorpdfstring{$(\phi_t)_{t\in\R}$}{phit} on \texorpdfstring{$T^1M$}{T1M}}

Let $\Gamma$ be a lattice of $G$, not necessarily uniform. We set $M=\Omega/\Gamma$. 

\begin{rqq}\label{T1Mbip vide cassym}
 The biproximal unit tangent bundle $T^1M_{bip}$ is empty. To see this, recall that the attracting fixed point of a proximal automorphism of $\Omega$ is always an extremal point of $\Omega$, so by definition the proximal limit set of $\Gamma$ is contained in the closure of the set of extremal points of $\Omega$. Here, since $\Omega$ is symmetric, the set of extremal points is closed and consists of projectivisations of rank-1 positive semi-definite Hermitian matrices, so the set of straight geodesics between two extremal points is $\Geod(\Omega)_{1,1}$, which is empty since $N\geq 3$. 
\end{rqq}

For $1\leq i,j\leq N-1$, we denote by $T^1M_{i,j}$ the quotient $T^1\Omega_{i,j}/\Gamma$. In this section we use the following celebrated theorem of Howe--Moore to study the action of the geodesic flow on each $T^1M_{i,N-i}$, with $1\leq i\leq N$. 

\begin{fait}[\!\cite{howe_moore}, see \eg {\cite[Th.\,2.2.20]{ergo_semi_simple}}]\label{Howe--Moore}
 Let $G$ be a connected non-compact simple Lie group with finite center, let $\pi$ be a unitary representation of $G$ in a separable Hilbert space, without any non-zero $G$-invariant vector. Let $x,y$ be two vectors in the Hilbert space. Then $\langle x,gy \rangle$ converges to zero when $g$ goes to infinity, \ie $g$ leaves every compact subset of $G$.
\end{fait}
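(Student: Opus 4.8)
The plan is to reprove this by the classical route of Howe and Moore: a Cartan‑decomposition reduction, the Mautner phenomenon, and the structure theory of simple groups to upgrade partial invariance to full invariance. I argue by contradiction. Suppose there are $\varepsilon>0$ and $g_n\to\infty$ in $G$ with $|\langle x,\pi(g_n)y\rangle|\geq\varepsilon$. Writing $g_n=k_na_nk_n'$ with $k_n,k_n'$ in a maximal compact subgroup $K$ and $a_n$ in a closed Weyl chamber $\overline{A^+}$, and extracting, I may assume $k_n\to k$, $k_n'\to k'$, and $\pi(a_n)y'\rightharpoonup w$ weakly for $y'=\pi(k')y$. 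Since the operators $\pi(g)$ are unitary and $\pi$ is strongly continuous, $\langle \pi(k^{-1})x,\,w\rangle=\lim_n\langle x,\pi(g_n)y\rangle$, which has modulus $\geq\varepsilon$; in particular $w\neq0$. Extracting once more, some positive (restricted) root $\alpha$ satisfies $\alpha(\log a_n)\to+\infty$.

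Next comes the \emph{Mautner phenomenon}. In its weak form it says: if $h_n\to\infty$, $\pi(h_n)y'\rightharpoonup w$, and $u\in G$ satisfies $h_n^{-1}uh_n\to e$, then $\pi(u)w=w$; indeed $\pi(u)\pi(h_n)y'=\pi(h_n)\pi(h_n^{-1}uh_n)y'$ differs from $\pi(h_n)y'$ by a vector tending to $0$ in norm, and $\pi(u)$ is weakly continuous. Applying this with $h_n=a_n$ and $u$ in the root subgroup $U_\alpha$ (for which $a_n^{-1}ua_n\to e$), I conclude that the nonzero vector $w$ is fixed by the entire one‑parameter unipotent subgroup $U_\alpha$.

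Now I propagate this invariance. Let $H_\alpha$ be the $\SL_2(\R)$‑ or $\PSL_2(\R)$‑subgroup generated by $U_\alpha$ and $U_{-\alpha}$, in which $U_\alpha$ plays the role of the upper unipotent $N^+$ (when the restricted root system is non‑reduced one uses instead the relevant rank‑one subgroup). The one essential ``$\SL_2$ input'' is the rank‑one case of the theorem: in any unitary representation of $\SL_2(\R)$, the space of $N^+$‑fixed vectors equals the space of fully invariant vectors. Hence $w$ is $H_\alpha$‑fixed, in particular fixed by the coroot one‑parameter subgroup $\{a^s_\alpha\}$. I then iterate the Mautner lemma in its strong form (if $\pi(h)v=v$ and $h^{-k}uh^k\to e$ then $\pi(u)v=v$): since $\operatorname{Ad}(a^s_\alpha)$ scales the root space $\mathfrak g_\beta$ by $e^{s\langle\beta,\alpha^\vee\rangle}$, the $\{a^s_\alpha\}$‑fixed vector $w$ is fixed by $U_\beta$ and by $U_{-\beta}$, hence by $H_\beta$ and by $\{a^s_\beta\}$, for every root $\beta$ not orthogonal to $\alpha^\vee$. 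Repeating, and using that $G$ is \emph{simple} --- so its root system is irreducible and the graph on roots with edges between non‑orthogonal roots is connected --- I get that $w$ is fixed by $U_\beta$ for \emph{all} roots $\beta$. These root subgroups generate the connected semisimple group $G$, so $w$ is a nonzero $G$‑invariant vector, contradicting the hypothesis; this contradiction proves the theorem.

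The main obstacle is the rank‑one input invoked above, \ie the assertion that a unitary representation of $\SL_2(\R)$ with no nonzero invariant vector has no nonzero $N^+$‑invariant vector. One way to see it: the closed, $A$‑invariant space $W$ of $N^+$‑fixed vectors contains no nonzero $B$‑fixed vector (where $B=AN^+$), because a $B$‑fixed vector has constant matrix coefficient $g\mapsto\langle\pi(g)v,v\rangle$ on the dense big Bruhat cell $Bw_0B$, hence everywhere, hence is $G$‑fixed; so by Stone's theorem the spectral measure of the action of $A$ on $W$ has no atom at $0$. Writing a generic element $n^-_r\in N^-$ in the big cell as $n^+_{1/r}\,\tilde a_r\,w_0\,n^+_{1/r}$ with $\tilde a_r\to\infty$ in $A$ as $r\to0$, the $N^+$‑invariance of $w$ collapses $\langle\pi(n^-_r)w,w\rangle$ --- whose limit as $r\to0$ is $\|w\|^2$ --- into a matrix coefficient of $A$ of the form $\int e^{it\lambda}\,d\nu(\lambda)$ with $\nu(\{0\})=0$; since $r\mapsto t(r)$ is onto a half‑line and $\sum_\lambda|\nu(\{\lambda\})|<\infty$, a Wiener‑type averaging argument forces this limit to be $0$, whence $w=0$. (Alternatively one quotes the explicit classification of the unitary dual of $\SL_2(\R)$ together with a direct‑integral decomposition, but extracting decay of matrix coefficients of a general representation from that requires care.) Granting this rank‑one fact, the rest is elementary bookkeeping with root systems and the two Mautner lemmas.
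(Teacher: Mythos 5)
The paper does not prove this statement: it is quoted as a Fact, with the proof deferred to Howe--Moore and to \cite[Th.\,2.2.20]{ergo_semi_simple}. Your argument is essentially the proof given in those references (Cartan decomposition and weak limits, the two Mautner lemmas, reduction to the rank-one $\SL_2(\R)$ case via Bruhat decomposition and a one-sided Wiener averaging lemma, then propagation through the irreducible restricted root system), and it is correct; the only points I would tighten are that $U_\alpha=\exp(\mathfrak g_\alpha)$ need not be one-parameter (harmless, since the Mautner lemma contracts all of it and the $\SL_2$-step only needs one line in $\mathfrak g_\alpha$, which also disposes of the non-reduced case without invoking higher rank-one groups), and that in the Wiener step the relevant fact is simply $\nu(\{0\})=\langle \pi(w_0)w,P_0w\rangle=0$ because the spectral projection $P_0$ preserves the $A$-invariant subspace $W$.
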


Proposition~\ref{NW dans le cas symétrique} and Section~\ref{The non-wandering set} imply that the sets $\NW(\Geod(\Omega),\Gamma)$, $\NW(T^1\Omega,\Gamma\times\R)$, and $\NW(T^1M,(\phi_t)_{t\in\R})$ are respectively contained in $\bigcup_{1\leq i\leq N-1}\Geod(\Omega)_{i,N-i}$, $\bigcup_{1\leq i\leq N-1}T^1\Omega_{i,N-i}$ and $\bigcup_{1\leq i\leq N-1}T^1M_{i,N-i}$. We are now going to see that we actually have equalities. Recall that a finite measure $\mu$ preserved by a measurable flow $(\phi_t)_{t\in\R}$ is called \emph{mixing} if, for any two functions $f,g\in \mathrm{L}^2(\mu)$ with zero integral, we have
\[\int f\cdot (g\circ \phi_t)\, d \mu \underset{t\to\infty}{\longrightarrow} 0.\]
Recall also that a continuous flow is topologically mixing on the support of a mixing invariant measure. Therefore Proposition~\ref{symmetric_top_mixing} is an immediate consequence of the following proposition, and of Zimmer's rigidity theorem \cite[Th.\,1.4]{zimmer_higher_rank}.

\begin{prop}\label{symmetric_mixing}
 Consider $N\geq 3$, the algebra $\K=\R$, $\C$, $\mathbb{H}$, or $\mathbb{O}$ (if $N=3$), the vector space $V=V_{N,\K}$, the properly convex open set $\Omega=\Omega_{N,\K}$, and the group $G=G_{N,\K}$. Take a lattice $\Gamma$ of $G$, not necessarily uniform, and denote by $M$ the quotient $\Omega/\Gamma$. Then for any $1\leq i\leq N-1$, the (finite and fully supported) Haar measure on $T^1M_{i,N-i}$ is mixing under the geodesic flow; as a consequence the geodesic flow is topologically mixing on $T^1M_{i,N-i}$. Furthermore, $\NW(T^1M,(\phi_t)_{t\in\R})$ has exactly $N-1$ connected components, which are $\{T^1M_{i,N-i} : i=1,\dots,N-1\}$.
\end{prop}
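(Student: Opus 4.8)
The plan is to realise each stratum $T^1M_{i,N-i}$ as a finite measure-theoretic factor of a homogeneous flow on a lattice quotient of $G=G_{N,\K}$, apply the Howe--Moore theorem there, and then finish with soft dynamical arguments.

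\emph{Reduction to the identity component.} First I would fix $1\le i\le N-1$, abbreviate $K_i:=K_{i,N-i}$, $A_i:=A_{i,N-i}$, and set $\Gamma_0:=\Gamma\cap G_0$. Since $G_0$ is open of finite index in $G$, the group $\Gamma_0$ is a lattice in $G_0$: it is discrete, and $\Gamma_0\backslash G_0$ embeds as an open --- hence finite-volume --- subset of the finite-volume space $\Gamma\backslash G$. Moreover $G_0$ is a connected non-compact \emph{linear} simple Lie group, hence has finite centre. By Proposition~\ref{NW dans le cas symétrique}.\ref{item:alg interpretation of geodflow}, $T^1\Omega_{i,N-i}$ is $G_0$-equivariantly homeomorphic to $G_0/K_i$ in such a way that the geodesic flow becomes right translation by $A_i$; in particular $A_i$ normalises $K_i$, which is exactly what makes that identification consistent. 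Passing to the quotient by $\Gamma_0$, the flow on $\Gamma_0\backslash G_0/K_i$ is again right translation by $A_i$, and the natural map $\Gamma_0\backslash T^1\Omega_{i,N-i}\to\Gamma\backslash T^1\Omega_{i,N-i}=T^1M_{i,N-i}$ is a finite-to-one, proper, continuous, $(\phi_t)$-equivariant surjection, the fibres being the orbits of the finite group $\Gamma/\Gamma_0$. Composing with $\Gamma_0\backslash G_0\to\Gamma_0\backslash G_0/K_i$ (which intertwines the $A_i$-actions, again because $A_i$ normalises $K_i$) gives a factor map $q\colon\Gamma_0\backslash G_0\to T^1M_{i,N-i}$ of $A_i$-systems; it carries the Haar probability measure of $\Gamma_0\backslash G_0$ to the Haar measure of $T^1M_{i,N-i}$, which is thus finite and of full support.

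\emph{Mixing.} Right translation by $G_0$ on $\Gamma_0\backslash G_0$ is transitive, hence ergodic, so $L^2_0(\Gamma_0\backslash G_0)$ has no non-zero $G_0$-invariant vector; Fact~\ref{Howe--Moore} then gives decay of its matrix coefficients along any sequence leaving every compact subset of $G_0$. Since $a_t\to\infty$ in $G_0$ as $t\to\pm\infty$, right translation by $A_i$ is mixing for the Haar probability measure on $\Gamma_0\backslash G_0$. Mixing passes to the factor $q$, so the Haar measure of $T^1M_{i,N-i}$ is mixing under the geodesic flow. As a continuous flow is topologically mixing on the support of a mixing invariant measure, and that support is all of $T^1M_{i,N-i}$, the first assertion of the proposition follows.

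\emph{The non-wandering set.} For the second assertion I would first establish $\NW(T^1M,(\phi_t)_{t\in\R})\subseteq\bigcup_{1\le i\le N-1}T^1M_{i,N-i}$: by the identifications of Section~\ref{The non-wandering set} (applied both to $\Gamma$ and to $G$) and monotonicity of the non-wandering set in the acting group, $\pi_\Gamma^{-1}(\NW(T^1M))=\NW(T^1\Omega,\Gamma\times\R)\subseteq\NW(T^1\Omega,G\times\R)=\pi_\R^{-1}(\NW(\Geod(\Omega),G))$, which by the third item of Proposition~\ref{NW dans le cas symétrique} equals $\bigcup_i T^1\Omega_{i,N-i}$. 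For the reverse inclusion, the fully supported invariant probability measure on $T^1M_{i,N-i}$ makes a dense set of points recurrent for $\phi_t|_{T^1M_{i,N-i}}$ (Poincaré recurrence plus full support); since the non-wandering set of $\phi_t|_{T^1M_{i,N-i}}$ is closed it is all of $T^1M_{i,N-i}$, and because $T^1M_{i,N-i}$ carries the subspace topology this forces each of its points to be non-wandering for $(\phi_t)$ on $T^1M$. Finally $T^1\Omega_{i,N-i}$ is closed in $T^1\Omega$ and connected (being $G_0/K_i$ with $G_0$ connected), hence $T^1M_{i,N-i}$ is closed and connected in $T^1M$; as there are finitely many of them and they are pairwise disjoint, each is also open in $\NW(T^1M,(\phi_t))$. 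Since all $N-1$ of them are non-empty (Proposition~\ref{NW dans le cas symétrique}.\ref{item:T1Oij vide}), they are precisely the connected components.

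\emph{Main obstacle.} The only genuinely delicate point is in the first paragraph: coping with the possible disconnectedness of $G$ by descending to $G_0$ and $\Gamma_0=\Gamma\cap G_0$, verifying that $T^1M_{i,N-i}$ really is a finite factor of the homogeneous flow on $\Gamma_0\backslash G_0/K_i$, and using that $A_i$ normalises $K_i$ so that passing to the $K_i$-quotient is compatible with the geodesic flow. Everything else --- Howe--Moore, stability of mixing under factor maps, and the fact that a fully supported invariant probability measure forces the whole space to be non-wandering --- is routine.
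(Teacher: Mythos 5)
Your proof is correct and follows essentially the same route as the paper's: reduce to the identity component, apply Howe--Moore to $L^2_0(\Gamma_0\backslash G_0)$ to get mixing of the one-parameter subgroup $A_{i,N-i}$, then pass to the factor $\Gamma\backslash G_0/K_{i,N-i}=T^1M_{i,N-i}$. You are a bit more careful than the paper about two points it leaves implicit --- working with $\Gamma_0=\Gamma\cap G_0$ and explicitly constructing the factor map rather than "replacing $\Gamma$ by a finite-index subgroup," and spelling out the closed-connected-disjoint argument for the count of connected components of $\NW(T^1M)$ --- but these are elaborations of the same argument rather than a different method.
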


\begin{proof}
 Up to replacing $\Gamma$ by a finite-index subgroup, we can assume that $\Gamma$ is contained in $G_0$. Since $\Gamma$ is a lattice, the Haar measure $m$ on $\Gamma\backslash G_0$ is finite. Fix $1\leq i\leq N-1$. By applying the Howe--Moore theorem (Fact~\ref{Howe--Moore}) to the unitary representation of $G_0$ in $\mathrm{L}^2(\Gamma\backslash G_0,m)$, we obtain that $m$ is mixing under the action of the (one-parameter) non-compact subgroup $A_{i,N-i}$ of $G_0$. According to Proposition~\ref{NW dans le cas symétrique}.\ref{item:alg interpretation of geodflow}, it immediately follows that the induced measure on $T^1M_{i,N-i}=\Gamma\backslash G/K_{i,N-i}$ is mixing under the action of the geodesic flow. Since the Haar measure in fully supported, the geodesic flow on $T^1M_{i,N-i}$ is topologically mixing, and its non-wandering set is $T^1M_{i,N-i}$.
\end{proof}

\begin{appendices}

\section{Proof of Crampon's Lemma~\ref{crampon}}\label{appendix}

It is enough to establish Lemma~\ref{crampon} when $c_1(0)=c_2(0)$. Indeed, suppose the lemma true in this case. Consider two straight geodesics $c_1$ and $c_2$, each parametrised with constant speed. Let $c_3$ be the straight geodesic, parametrised with constant speed, such that $c_3(0)=c_1(0)$ and $c_3(T)=c_2(T)$. For $t\leq T$ we have
\begin{align*}
d_\Omega(c_1(t),c_2(t)) &\leq d_\Omega(c_1(t),c_3(t)) + d_\Omega(c_3(t),c_2(t))\\
&\leq d_\Omega(c_1(T),c_3(T))+d_\Omega(c_3(0),c_2(0))\\
&\leq d_\Omega(c_1(T),c_2(T))+d_\Omega(c_1(0),c_2(0)).
\end{align*}

We now assume $c_1(0)=c_2(0)$ (and that $c_1$ and $c_2$ are not constant, otherwise the proof is trivial). We can then assume that $\Omega$ has dimension 2, and we can consider an affine chart in which both projective lines $(c_1(-\infty)\oplus c_2(-\infty))$ and $(c_1(\infty)\oplus c_2(\infty))$ are vertical. Fix $0<t<T$. We draw Figure~\ref{figure_crampon} (left-hand side) which contains the following points:

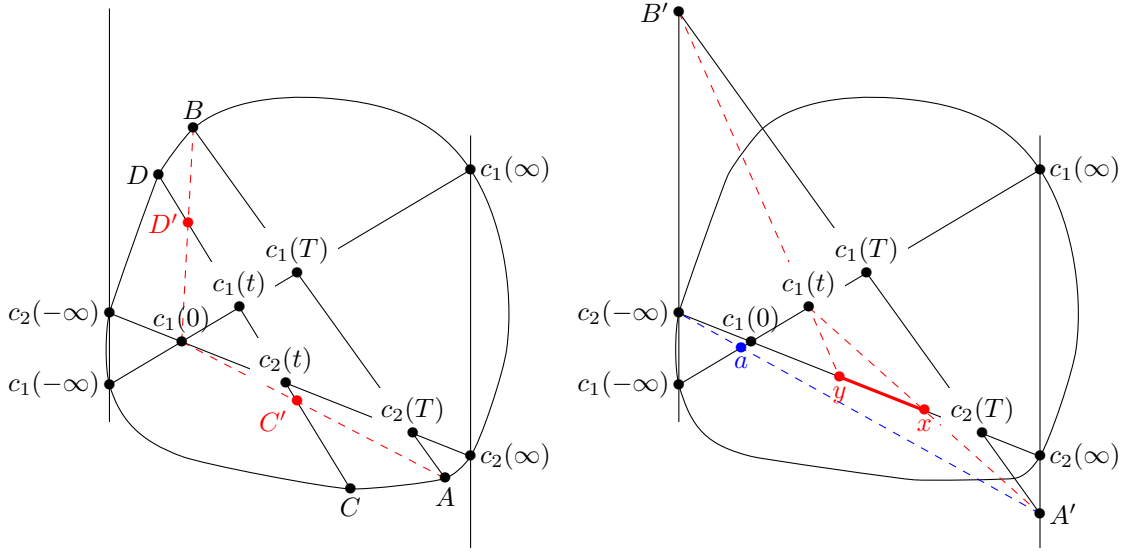
\begin{figure}
\begin{center}
\begin{tikzpicture}[scale=.95]
\coordinate (c1minfty) at (0,2);
\coordinate (c1infty) at (5,5);
\coordinate (c2minfty) at (0,3);
\coordinate (c2infty) at (5,1);
\coordinate (e1) at (1,6);
\coordinate (e2) at (3,6);
\coordinate (e3) at (5.5,2.5);
\coordinate (e4) at (3.5,0);
\coordinate (e5) at (1,1);

\coordinate (c0) at (intersection of c1minfty--c1infty and c2minfty--c2infty);
\coordinate (c1T) at ($ (c0)!0.4!(c1infty) $);
\coordinate (c2T) at ($ (c0)!0.8!(c2infty) $);
\coordinate (c1t) at ($ (c0)!0.5!(c1T) $);
\coordinate (c2t) at ($ (c0)!0.45!(c2T) $);
\coordinate (A') at (intersection of c1T--c2T and c1infty--c2infty);
\coordinate (B') at (intersection of c1T--c2T and c1minfty--c2minfty);
\coordinate (x) at (intersection of A'--c1t and c2minfty--c2infty);
\coordinate (y) at (intersection of B'--c1t and c2minfty--c2infty);
\coordinate (a) at (intersection of A'--c2minfty and c1minfty--c1infty);
\coordinate (b) at (intersection of B'--c2infty and c1minfty--c1infty);
\coordinate (A) at ($ (c2T)!-0.28!(c1T) $);
\coordinate (B) at ($ (c1T)!-0.9!(c2T) $);
\coordinate (C) at ($ (c2t)!-1.4!(c1t) $);
\coordinate (D) at ($ (c1t)!-1.75!(c2t) $);
\coordinate (C') at (intersection of C--D and c0--A);
\coordinate (D') at (intersection of C--D and c0--B);

\coordinate (v1) at ($ (c1infty)!-0.1!(A') $);
\coordinate (v2) at ($ (A')!-0.1!(c1infty) $);
\coordinate (v3) at ($ (B')!-0.01!(c1minfty) $);
\coordinate (v4) at ($ (c1minfty)!-0.1!(B') $);

\length{e5, C, A, c2infty, e3, c1infty, e2, B, D, c2minfty, c1minfty}
\cvxx{e5, C, A, c2infty, e3, c1infty, e2, B, D, c2minfty, c1minfty}{1.8}

\draw (c1minfty)--(c1infty);
\draw (c2minfty)--(c2infty);
\draw (v1)--(v2);
\draw (v3)--(v4);
\draw (C)--(D);
\draw (A)--(B);
\draw [dashed,red] (c0)--(A);
\draw [dashed,red] (c0)--(B);

\draw (c1minfty) node{$\bullet$} node[left]{$c_1(-\infty)$};
\draw (c2minfty) node{$\bullet$} node[left]{$c_2(-\infty)$};
\draw (c1infty) node{$\bullet$} node[right]{$c_1(\infty)$};
\draw (c2infty) node{$\bullet$} node[right]{$c_2(\infty)$};
\draw (c1T) node[above,fill=white]{$c_1(T)$} node{$\bullet$};
\draw (c2T) node[above,fill=white]{$c_2(T)$} node{$\bullet$};
\draw (c0) node{$\bullet$} node[above]{$c_1(0)$};
\draw (c1t) node[above,fill=white]{$c_1(t)$} node{$\bullet$};
\draw (c2t) node[above,fill=white]{$c_2(t)$} node{$\bullet$};
\draw (A) node[below]{$A$} node{$\bullet$};
\draw (B) node[above]{$B$} node{$\bullet$};
\draw (C) node[below]{$C$} node{$\bullet$};
\draw (D) node[left]{$D$} node{$\bullet$};
\draw (C') node[below left,red]{$C'$} node[red]{$\bullet$};
\draw (D') node[left,red]{$D'$} node[red]{$\bullet$};
\end{tikzpicture}
\begin{tikzpicture}[scale=.95]
\coordinate (c1minfty) at (0,2);
\coordinate (c1infty) at (5,5);
\coordinate (c2minfty) at (0,3);
\coordinate (c2infty) at (5,1);
\coordinate (e1) at (1,6);
\coordinate (e2) at (3,6);
\coordinate (e3) at (5.5,2.5);
\coordinate (e4) at (3.5,0);
\coordinate (e5) at (1,1);

\coordinate (c0) at (intersection of c1minfty--c1infty and c2minfty--c2infty);
\coordinate (c1T) at ($ (c0)!0.4!(c1infty) $);
\coordinate (c2T) at ($ (c0)!0.8!(c2infty) $);
\coordinate (c1t) at ($ (c0)!0.5!(c1T) $);
\coordinate (c2t) at ($ (c0)!0.45!(c2T) $);
\coordinate (A') at (intersection of c1T--c2T and c1infty--c2infty);
\coordinate (B') at (intersection of c1T--c2T and c1minfty--c2minfty);
\coordinate (x) at (intersection of A'--c1t and c2minfty--c2infty);
\coordinate (y) at (intersection of B'--c1t and c2minfty--c2infty);
\coordinate (a) at (intersection of A'--c2minfty and c1minfty--c1infty);
\coordinate (b) at (intersection of B'--c2infty and c1minfty--c1infty);
\coordinate (A) at ($ (c2T)!-0.28!(c1T) $);
\coordinate (B) at ($ (c1T)!-0.9!(c2T) $);
\coordinate (C) at ($ (c2t)!-1.28!(c1t) $);
\coordinate (D) at ($ (c1t)!-1.75!(c2t) $);
\coordinate (C') at (intersection of C--D and c0--A);
\coordinate (D') at (intersection of C--D and c0--B);

\coordinate (v1) at ($ (c1infty)!-0.1!(A') $);
\coordinate (v2) at ($ (A')!-0.1!(c1infty) $);
\coordinate (v3) at ($ (B')!-0.01!(c1minfty) $);
\coordinate (v4) at ($ (c1minfty)!-0.1!(B') $);

\length{e5, C, A, c2infty, e3, c1infty, e2, B, D, c2minfty, c1minfty}
\cvxx{e5, C, A, c2infty, e3, c1infty, e2, B, D, c2minfty, c1minfty}{1.8}

\draw (c1minfty)--(c1infty);
\draw (c2minfty)--(c2infty);
\draw (A')--(B');
\draw [red,dashed] (c1t)--(A');
\draw [red,dashed] (B')--(y);
\draw [blue,dashed] (A')--(c2minfty);
\draw (v1)--(v2);
\draw (v3)--(v4);
\draw [red,very thick] (x)--(y);

\draw (c1minfty) node{$\bullet$} node[left]{$c_1(-\infty)$};
\draw (c2minfty) node{$\bullet$} node[left]{$c_2(-\infty)$};
\draw (c1infty) node{$\bullet$} node[right]{$c_1(\infty)$};
\draw (c2infty) node{$\bullet$} node[right]{$c_2(\infty)$};
\draw (c1T) node[above,fill=white]{$c_1(T)$} node{$\bullet$};
\draw (c2T) node[above,fill=white]{$c_2(T)$} node{$\bullet$};
\draw (c0) node{$\bullet$} node[above]{$c_1(0)$};
\draw (c1t) node[above,fill=white]{$c_1(t)$} node{$\bullet$};
\draw (A') node{$\bullet$} node[right]{$A'$};
\draw (B') node{$\bullet$} node[left]{$B'$};
\draw (x) node[red]{$\bullet$} node[below,red]{$x$};
\draw (y) node[red]{$\bullet$} node[below,red]{$y$};
\draw (a) node[blue]{$\bullet$} node[below,blue]{$a$};
\end{tikzpicture}

\end{center}
\caption{Proof of Crampon's Lemma~\ref{crampon}}\label{figure_crampon}
\end{figure}

\begin{itemize}
\item $A$ and $B$ are the intersection points of the line $(c_2(T)\oplus c_1(T))$ with $\partial\Omega$;
\item $C$ and $D$ are the intersection points of the line $(c_2(t)\oplus c_1(t))$ with $\partial\Omega$;
\item $C'$ and $D'$ are the intersection points of the line $(c_2(t)\oplus c_1(t))$ with the lines $(c_1(0)\oplus A)$ and $(c_1(0)\oplus B)$.
\end{itemize}

If we are in the case, as in Figure~\ref{figure_crampon}, where the lines $(c_1(t)\oplus c_2(t))$ and $(c_1(T)\oplus c_2(T))$ do not intersect inside $\Omega$, then by convexity of $\Omega$ the point $C'$ lies between $C$ and $c_2(t)$ and $D'$ lies between $D$ and $c_1(t)$. Therefore by definition of the cross-ratio we deduce that
\begin{align*}
d_\Omega(c_1(T),c_2(T)) &= d_{(C',D')}(c_1(t),c_2(t))\\
&\geq d_{(C,D)}(c_1(t),c_1(t))\\
&\geq d_\Omega(c_1(t),c_2(t)).
\end{align*}

It remains to prove that the lines $(c_1(t)\oplus c_2(t))$ and $(c_1(T)\oplus c_2(T))$ do not cross inside $\Omega$ (this is the missing explanation in Crampon's original proof). We draw Figure~\ref{figure_crampon} (right-hand side) which contains the points:

\begin{itemize}
\item $A'$ and $B'$ are the intersection points of the line $(c_2(T)\oplus c_1(T))$ with the lines $(c_1(\infty)\oplus c_2(\infty))$ and $(c_1(-\infty)\oplus c_2(-\infty))$.
\item $x$ and $y$ are  the intersection points of the line $(c_2(-\infty)\oplus c_2(\infty))$ with the lines $(c_1(t)\oplus A')$ and $(c_1(t)\oplus B')$.
\item $a$ is the intersection point of the line $(c_1(-\infty)\oplus c_1(\infty))$ with the line $(c_2(-\infty)\oplus A')$.
\end{itemize}
And we observe that it is enough to prove that $c_2(t)$ is on the segment $[x,y]$. In other words we want to establish:

\begin{align*}
\frac{d_\Omega(c_2(0),y)}{d_\Omega(c_2(0),c_2(T))}\leq \frac{d_\Omega(c_2(0),c_2(t))}{d_\Omega(c_2(0),c_2(T))} =\frac{t}{T}= \frac{d_\Omega(c_1(0),c_1(t))}{d_\Omega(c_1(0),c_1(T))} \leq \frac{d_\Omega(c_2(0),x)}{d_\Omega(c_2(0),c_2(T))}.
\end{align*}
For example, if we want to establish the inequality on the right, we see by definition of the cross-ratio that it is enough to prove:

\[ \frac{d_\Omega(c_1(0),c_1(t))}{d_\Omega(c_1(0),c_1(T))} \leq \frac{d_{(a,c_1(\infty))}(c_1(0),c_1(t))}{d_{(a,c_1(\infty))}(c_1(0),c_1(T))}.\]
It is a consequence of the following lemma. This, and a similar argument for the inequality on the left, conclude the proof of Lemma~\ref{crampon}.

\begin{lemma} For all $a<a'<x<y<z<b\in\R$,
\[\frac{d_{(a,b)}(x,y)}{d_{(a,b)}(x,z)}\leq \frac{d_{(a',b)}(x,y)}{d_{(a',b)}(x,z)}.\]
\end{lemma}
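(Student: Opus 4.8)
The plan is to prove the following slightly cleaner and stronger statement: with $b,x,y,z\in\R$ fixed and $x<y<z<b$, the function $a\mapsto \dfrac{d_{(a,b)}(x,y)}{d_{(a,b)}(x,z)}$ is non-decreasing on the interval $(-\infty,x)$. Evaluating this monotone function at $a$ and at $a'$ then gives exactly the inequality of the lemma.

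First I would pass to the infinitesimal form of the Hilbert metric on an interval. Differentiating $d_{(a,b)}(x,w)=\tfrac12\log[a,x,w,b]=\tfrac12\log\frac{(w-a)(b-x)}{(x-a)(b-w)}$ with respect to $w$ yields
\[d_{(a,b)}(x,w)=\int_x^w\rho_a(s)\,ds,\qquad \rho_a(s):=\frac12\Bigl(\frac1{s-a}+\frac1{b-s}\Bigr).\]
Thus the ratio to be studied equals $\int_x^y\rho_a\,/\int_x^z\rho_a$, and it is enough to check that its logarithmic derivative in $a$ is $\ge 0$. Differentiating under the integral sign (legitimate since $\rho_a(s)$ and $\partial_a\rho_a(s)=\tfrac12(s-a)^{-2}$ are bounded for $s\in[x,z]$ and $a$ in a compact subinterval of $(-\infty,x)$),
\[\frac{d}{da}\log\frac{\int_x^y\rho_a}{\int_x^z\rho_a}=\frac{\int_x^y\partial_a\rho_a}{\int_x^y\rho_a}-\frac{\int_x^z\partial_a\rho_a}{\int_x^z\rho_a}.\]
Writing $\int_x^z=\int_x^y+\int_y^z$ and setting $\Psi:=\partial_a\rho_a/\rho_a$, a short rearrangement shows that the right-hand side equals a \emph{positive} multiple of $\langle\Psi\rangle_{[x,y]}-\langle\Psi\rangle_{[y,z]}$, where $\langle\,\cdot\,\rangle_I$ denotes the average with respect to the probability measure proportional to $\rho_a(s)\,ds$ on the subinterval $I$.

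It then remains to observe that $\Psi$ is non-increasing in $s$. An elementary simplification gives $\Psi(s)=\frac{1}{b-a}\cdot\frac{b-s}{s-a}$, which is strictly decreasing on $(a,b)$. Since the interval $[x,y]$ lies entirely to the left of $[y,z]$, every average of $\Psi$ over $[x,y]$ is $\ge\Psi(y)\ge$ every average of $\Psi$ over $[y,z]$; hence the displayed difference is $\ge 0$, which finishes the proof.

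I do not expect a genuine obstacle here: the whole point is to pick the right reformulation. Attacking the statement head-on — differentiating the quotient of the two logarithms of cross-ratios — does work, but it forces one to prove an inequality of the unilluminating shape ``a rational function of $s$ is $\ge$ the logarithm of another rational function of $s$'' (which in turn follows from two applications of $\log(1+u)\le u$). Routing through the integral representation and the averaging identity makes the monotonicity transparent and isolates the single real input, namely that $s\mapsto\frac{b-s}{s-a}$ is decreasing. The only thing to be careful about is the bookkeeping of inequality directions — monotonicity in $a$ versus in $t=x-a$, and the ratio $d_{(a,b)}(x,y)/d_{(a,b)}(x,z)$ versus its reciprocal — where a sign slip is easy to make.
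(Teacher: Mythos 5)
Your proof is correct. The identity $d_{(a,b)}(x,w)=\int_x^w\rho_a$ with $\rho_a(s)=\tfrac12\bigl(\tfrac1{s-a}+\tfrac1{b-s}\bigr)$ is right, the rearrangement $\frac{P'}{P}-\frac{P'+Q'}{P+Q}=\frac{Q}{P+Q}\bigl(\frac{P'}{P}-\frac{Q'}{Q}\bigr)$ with $P=\int_x^y\rho_a$, $Q=\int_y^z\rho_a$ is exactly the claimed positive multiple of a difference of weighted averages, and $\Psi(s)=\partial_a\rho_a/\rho_a=\frac{1}{b-a}\cdot\frac{b-s}{s-a}$ is indeed decreasing on $(a,b)$, so the Chebyshev-type comparison $\langle\Psi\rangle_{[x,y]}\ge\Psi(y)\ge\langle\Psi\rangle_{[y,z]}$ closes the argument, with the inequality pointing the right way for $a<a'$.

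Your route differs from the paper's. The paper first normalises $x=0$, $y=1$, $b=\infty$ by a projective transformation and reduces to showing that the explicit one-variable function $f_z(a)=\log\bigl(1+\tfrac{-1}{a}\bigr)/\log\bigl(1+\tfrac{-z}{a}\bigr)$ is non-decreasing on $(-\infty,0)$, which it asserts ``follows from the computation of the derivative'' without carrying that computation out; as you note, that derivative computation, while elementary, reduces to an inequality of the form $(1+zu)\log(1+zu)\ge z(1+u)\log(1+u)$ that one still has to verify. Your argument proves the same monotonicity-in-$a$ statement but replaces the brute-force calculus by the integral representation of the one-dimensional Hilbert metric plus an averaging identity, so that the entire inequality is traced back to the single monotonicity of $s\mapsto\frac{b-s}{s-a}$. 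What you gain is a self-contained and more transparent proof that also dispenses with the normalisation $b=\infty$; what the paper's version buys is brevity (modulo the omitted derivative check). Either proof is acceptable where the lemma is used in the appendix.
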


\begin{proof}
 Up to acting by a projective transformation we can assume that $x=0$, $y=1$ and $b=\infty$. For $z>1$ we consider the function:
 \[a\mapsto f_z(a) = \frac{d_{(a,\infty)}(0,1)}{d_{(a,\infty)}(0,z)}.\]
 on $(-\infty,0)$. We have to check that this function $f_z$ is non-decreasing. This follows immediatly from the fact that, for every $a<0$,
 \[f_z(a)=\frac{\log(1+\frac{-1}{a})}{\log(1+\frac{-z}{a})}.\]
 and from the computation of the derivative.
\end{proof}
\end{appendices}

\bibliographystyle{alpha}
{\small \bibliography{bib}}

\Addresses

\end{document}